\colorlet{linkequation}{blue}
\DeclareMathOperator{\supp}{supp}
\newcommand{\wt}{\widetilde}
\newcommand{\cg}{\Gamma}
\renewcommand{\L}{\mathcal{L}}
\newcommand{\F}{\mathcal{F}}
\renewcommand{\k}{\kappa}
\newcommand{\g}{\mathfrak{g}}
\newcommand{\vf}{\mathfrak{v}}
\newcommand{\inn}{{\it i}}
\newcommand{\out}{{\it o}}
\newcommand{\Rinn}{R^{\inn}}
\newcommand{\Rout}{R^{\out}}
\newcommand{\cD}{{\mathcal D}}
\newcommand{\cH}{{\mathcal H}}
\renewcommand{\l}{\lambda}
\newcommand{\D}{\mathcal{D}}
\newcommand{\R}{\mathbb{R}}
\newcommand{\C}{\mathbb{C}}
\renewcommand{\a}{\alpha}
\newcommand{\G}{\mathbb{G}}
\newcommand{\diam}{\operatorname{diam}}
\newcommand{\h}{\mathcal{H}_{\infty}}
\numberwithin{equation}{section}
\newtheorem{thm}{Theorem}[section]
\newtheorem{prop}[thm]{Proposition}
\newtheorem{cor}[thm]{Corollary}
\newtheorem{lem}[thm]{Lemma}
\theoremstyle{definition}
\newtheorem{defn}[thm]{Definition}
\newtheorem{exmp}[thm]{Example}
\newtheorem{rem}[thm]{Remark}
\newcommand{\restrict}{\begin{picture}(12,12)
                        \put(2,0){\line(1,0){8}}
                        \put(2,0){\line(0,1){8}}
                       \end{picture}}
\renewcommand*{\star}[2][-3mu]{\ensuremath{\mskip1mu\prescript{\smash{\mathrm *\mkern#1}}{}{\mathstrut#2}}}
\title{Capacities Characterizing Removable Sets for Various Function Spaces in Carnot Groups}
\author{Zack Boone}
\address{Department of Mathematics, University of Connecticut}
\email{zackary.boone@uconn.edu}
\begin{document}

\begin{abstract}
We study removable sets for the Campanato, H\"{o}lder continuous, $L^p_{\text{loc}}$, and Lipschitz functions in Carnot groups. In the former three cases, we characterize removability through the use of capacities with respect to any left-invariant linear differential operator $\L$ for which $\L$ and $\L^t$ are hypoelliptic and satisfy a homogeneity condition, while in the latter case we characterize Lipschitz functions with respect to the sub-Laplacian.
\end{abstract}

\maketitle

\section{Introduction}
For a given space of distributions $\mathcal{F}$ and a differential operator $\L$, a set $K$ is removable for $\L$-solutions in $\mathcal{F}$ if for any domain $\Omega$ and any element $f \in \mathcal{F}(\Omega)$ satisfying $\L f = 0$ in $\Omega \backslash K$ also satisfies $\L f = 0$ in $\Omega$. The notion of removable sets first arose in 1888 in the complex plane when Painlev\'e considered when a bounded analytic function on $\Omega \backslash K$ has an analytic extension to $\Omega$. 

One can think of a removable set for $\L$-solutions in $\F$ as a set which contains no "information" with respect to $\mathcal{F}$ and $\L$. In the context of bounded analytic functions, this was made more precise by Ahlfors in 1947 \cite{Ah} when he defined a set function called analytic capacity, denoted by $\gamma$, and showed that a compact set $K \subset \C$ is removable for bounded analytic functions if and only if $\gamma(K) = 0$. Hence, the "null sets" of this set function are precisely the removable sets for bounded analytic functions in $\C$. Ahlfors was motivated by the Painlev\'{e} problem, which asks to give a geometric condition that characterizes removable sets for bounded analytic functions. Analytic capacity is not a geometric condition; however, the study of analytic capacity helped to give a full solution to Painlev\'{e}'s problem which was achieved by Tolsa \cite{To}. Tolsa's work \cite{To} was built upon the work of many authors including Ahlfors, Denjoy, Garnett, Calder\'{o}n, David, Mattila, Mel'nikov, Verdera, Nazarov, Treil, and Volberg just to name a few. See Tolsa's article \cite{Toref} written for the proceedings of the 2006 ICM for more history and results.

The study of removable sets for general differential operators and function spaces has been studied by a wide range of authors such as Carleson, Kr\'{a}l, and Harvey-Polking. In 1963, Carleson \cite{Car} gave a removability characterization of H\"{o}lder continuous harmonic functions functions in $\R^n$. Harvey-Polking's results in \cite{HP} give sufficient conditions for when a set is removable for a large class of linear partial differential equations. They achieved this level of generality by proving the existence of a partition of unity, now called the Harvey-Polking partition of unity \cite[Lemma 3.1]{HP}, which incorporates derivative information. This partition of unity is fundamental to the study of removable sets and has also been applied to problems of uniform approximation \cite{P}. The results obtained by  Kr\'{a}l's \cite{Kr}  were quite general, characterizing removable sets for semielliptic partial differential operators for function spaces such as BMO, VMO, Campanato, and H\"{o}lder continuous functions. 

The Lipschitz harmonic capacity was introduced by Paramonov \cite[(2.2)]{P}, motivated by problems in uniform approximation. Mattila-Paramonov \cite{MP} also showed that the Lipschitz harmonic capacity characterizes bounded removable sets for Lipschitz harmonic functions \cite[Proposition 2.2]{MP}. In a follow up paper to \cite{HP}, Harvey-Polking in \cite{HP2} developed capacities in a very general form. Using these capacities they were able to characterize removable sets for large classes of differential operators and function spaces.

Removable sets have also been studied in the sub-Riemannian setting, for example \cite{CMat}, \cite{CMT}, \cite{CT}, \cite{CFO}, \cite{CLZ}. Capacities on the other hand have not been studied in the sub-Riemannian setting, which we do in this paper. We will adapt the capacities developed by Harvey-Polking and Paramonov in \cite{HP2} and \cite{P} respectively to the setting of Carnot groups, which we denote by $\G$ and the background of which will be given in Section \ref{section-2}.

Let $\L$ denote the sub-Laplacian and $\nabla_{\G}$ the horizontal gradient. To avoid technicalities, we will work with the sub-Laplacian and horizontal gradient which is generated from the Jacobian basis. See Section \ref{section-2} for more details. We define the Carnot Lipschitz harmonic capacity, denoted by $\k$, of a bounded set $K$ as \begin{align*}
    \k(K) := \sup \{ |\langle \L f, 1 \rangle| : f \in \text{Lip}_{\text{loc}}, \supp \L f \subset K, \| \nabla_{\G} f\| \leq 1\}. 
\end{align*}
Above, $\supp \L f$ refers to the distributional support of $\L f$. The proof that $\k$ characterizes removable sets for Lipschitz harmonic functions in Euclidean space from Mattila-Paramonov relies on \cite[Lemma 4.2]{P}. There is an issue when one tries to adapt this proof to the Carnot setting, which is that the differential operators that we usually deal with in $\G$ do not commute, which is indeed used in the proof of \cite[Lemma 4.2]{P}. The reader will see later that for $f$ in some distribution class $\F$, when one is trying to characterize removability it becomes natural to try to show $\psi \L f \ast \cg \in \F$ where $\psi \in C_0^{\infty}$ and $\cg$ is the fundamental solution to $\L$. We are able to show this when $\F = \text{Lip}_{\text{loc}}$ by getting a distributional formula for $\psi \L f \ast \cg$ that involves sums of functions which have horizontal derivatives on $f$, then proving that each of the functions in this sum satisfy the conditions of $\k$. Using this idea, we are still able to prove, like in the Euclidean setting from Mattila-Paramonov, the  the characterization of removable (Carnot) Lipschitz harmonic functions through the null sets of the (Carnot) Lipschitz harmonic capacity $\k$. 
\begin{thm}\label{lip-cap-char}
    Let $K \subset \G$ be bounded and $\L$ the sub-Laplacian. Then $K$ is removable for Lipschitz $\L$-solutions if and only if $\k(K) = 0$.
\end{thm}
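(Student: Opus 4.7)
The plan is to follow the Mattila--Paramonov template \cite{MP}, with the key new input being a distributional formula for $(\psi\L f)\ast\cg$ adapted to the non-commutative Carnot setting. For the forward direction, let $f$ be admissible in the supremum defining $\k(K)$: $f\in\text{Lip}_{\text{loc}}$, $\supp\L f\subset K$, and $\|\nabla_\G f\|_\infty\le 1$. The gradient bound yields global Lipschitz continuity in the Carnot--Carath\'eodory metric, and the support hypothesis gives $\L f=0$ on $\G\setminus K$. Removability (taking $\Omega=\G$) promotes this to $\L f\equiv 0$ on $\G$, so $\langle\L f,1\rangle=\langle\L f,\eta\rangle=0$ for any cutoff $\eta\in C_0^\infty(\G)$ equal to $1$ near $K$, and the supremum is $0$.

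For the backward direction, assume $\k(K)=0$ and let $f\in\text{Lip}_{\text{loc}}(\Omega)$ satisfy $\L f=0$ on $\Omega\setminus K$. It suffices to show $\langle\L f,\psi\rangle=0$ for every $\psi\in C_0^\infty(\Omega)$. Fix $\psi$ and set $F_\psi:=(\psi\L f)\ast\cg$, which is well-defined since $\psi\L f$ is compactly supported in $K\cap\supp\psi$; by properties of convolution with the fundamental solution, $\L F_\psi=\psi\L f$. If we can show that $F_\psi\in\text{Lip}_{\text{loc}}(\G)$ with a finite horizontal Lipschitz constant $C(\psi,f)$, then $C^{-1}F_\psi$ is admissible in the definition of $\k(K)$ and
\[
    |\langle\L f,\psi\rangle|=|\langle\L F_\psi,1\rangle|\le C\,\k(K)=0.
\]

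The main step is thus the Lipschitz bound for $F_\psi$. Using the Carnot Leibniz rule $\L(\psi f)=\psi\L f+2\sum_i(X_i\psi)(X_i f)+(\L\psi)f$ for the Jacobian-basis horizontal fields $X_1,\dots,X_m$ and the identity $(\L u)\ast\cg=u$ for compactly supported $u$, we rewrite
\[
    F_\psi=\psi f-2\sum_i \big((X_i\psi)(X_i f)\big)\ast\cg-\big((\L\psi)f\big)\ast\cg.
\]
Left-invariance of $X_j$ allows it to pass through each convolution onto $\cg$, producing kernels $X_j\cg$ that are homogeneous of degree $-(Q-1)$ (with $Q$ the homogeneous dimension of $\G$) and hence locally integrable. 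The inner factors $(X_i\psi)(X_i f)$, $(\L\psi)f$, and $X_j(\psi f)$ are bounded with compact support, since local Lipschitz continuity of $f$ on $\Omega\supset\supp\psi$ puts $X_i f\in L^\infty(\supp\psi)$. Each convolution is therefore a bounded function, so $\|\nabla_\G F_\psi\|_\infty<\infty$.

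The main obstacle is the non-commutativity of the horizontal fields, which is why Paramonov's Euclidean argument \cite[Lemma 4.2]{P}---relying on $\partial_i\partial_j=\partial_j\partial_i$ to permute derivatives under convolution---does not carry over directly; the commutators $[X_i,X_j]$ are nontrivial and lie in higher strata of $\g$. The distributional formula above is designed to sidestep this by keeping all derivatives on $f$ at first-order horizontal, so that no derivative commutation on $f$ is ever needed. Carefully justifying the formula distributionally, and controlling the commutators that arise when $X_j$ is pushed onto $\cg$, is the bulk of the technical work.
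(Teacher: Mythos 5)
Your overall architecture matches the paper's: the forward direction is immediate from the definition of removability with $\Omega=\G$, and the backward direction hinges on showing that $F_\psi=(\psi\L f)\ast\cg$ is globally Lipschitz with bounded horizontal gradient, so that a normalization of it is admissible for $\k(K)$. Where you genuinely diverge is in the decomposition of $F_\psi$, and that is where the one real gap sits. You expand $\L(\psi f)=\psi\L f+2\sum_i(X_i\psi)(X_if)+(\L\psi)f$ and hence write the middle terms of $F_\psi$ as $\bigl((X_i\psi)(X_if)\bigr)\ast\cg$, with a derivative landing directly on $f$. For $f$ merely locally Lipschitz, $X_if$ is only an a.e.-defined $L^\infty$ function (Pansu) while $\L f$ is a distribution, so the asserted identity is an equality of distributions that is \emph{not} automatic: unwinding it against a test function, it is equivalent to the integration-by-parts identity $\int(X_if)\,g=-\int f\,X_ig$ for $g\in C_0^\infty$, i.e.\ to the statement that the distributional horizontal derivative of a locally Lipschitz function coincides with its a.e.\ Pansu derivative. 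That statement is true in Carnot groups, but it is precisely the nontrivial input here, and your write-up treats it as a free ``Carnot Leibniz rule.'' You must either prove or cite it (for instance by mollification, in the spirit of the approximation step in the paper's Lemma \ref{real-ibp}).

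Granting that identification, your route is actually \emph{simpler} than the paper's: only one derivative ever falls on $\cg$, so every kernel you meet is homogeneous of degree $1-Q$, hence locally integrable, and each convolution is bounded by a direct annular estimate. The paper instead keeps the middle term in divergence form, $X_i(fX_i\varphi)\ast\cg$ (whence its different sign on the last term, $+f\L\varphi\ast\cg$), which avoids ever invoking the pointwise $X_if$ in the distributional identity but pays for it later: bounding $\nabla_{\G}$ of that term produces the kernel $X\wt{Y}\wt{\cg}$, homogeneous of degree $-Q$ and not locally integrable, and Lemma \ref{real-ibp} recovers integrability from the cancellation $|f(y)-f(x)|\lesssim\|y^{-1}\cdot x\|$. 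So the two proofs pay for the same fact in different places. Two smaller points: your closing remark about ``controlling the commutators that arise when $X_j$ is pushed onto $\cg$'' misidentifies the difficulty --- passing a left-invariant field onto the second factor of a convolution is exact, with no commutator terms --- and you should say a word about why continuity of $F_\psi$ together with a bounded horizontal gradient upgrades to a genuine Lipschitz estimate (the paper's Corollary \ref{bounded-deriv-is-lip} and Lemma \ref{local-implies-global}).
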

 We heavily rely on the fact that the sub-Laplacian is generated from horizontal vector fields and that the sub-Laplacian is $2$-homogeneous. We mention that Theorem \ref{lip-cap-char} also holds when $\L$ is a $2$-homogeneous linear differential operator generated by the horizontal vector fields from the Jacobian basis and for which $\L$ and $\L^t$ (the transpose of $\L$) are both hypoelliptic. The condition $\L$ and $\L^t$ being hypoelliptic guarantees the existence of a fundamental solution with certain nice homogeneity properties. See \cite[Theorem 2.1]{Fo}.

Characterizing removable sets for Campanato functions and H\"{o}lder continuous functions through any left-invariant homogeneous linear differential operator $\L$ for which both $\L$ and $\L^t$ are hypoelliptic, will actually come as consequences of our other main results. The task of characterizing removability for Campanato and H\"{o}lder continuous functions turns out to be more delicate than showing Theorem \ref{lip-cap-char}, which we now explain why. As $\L$ and $\L^t$ are hypoelliptic, we have the existence of a fundamental solution $\Gamma$ to $\L$. In contrast to the local Lipschitz case, we are not able to directly show that $\psi \L f \ast \cg \in \F$ where $\F$ is either the Campanato or H\"{o}lder space and $f \in \F$. In the Campanato case, the reason for this is not having access to a commutative group law and commutative differential operators. For the H\"{o}lder case, it is because the functions $f$ need not be differentiable almost everywhere. So for these more difficult cases, we will take a different strategy and the usefulness of the (Carnot) Harvey-Polking partition of unity in $\G$, proven by Chousionis and Tyson in \cite[Lemma 3.8]{CT} will take effect. Sections \ref{section-4} and \ref{section-6} highlight this alternative approach where we will show that certain capacities are comparable to Hausdorff content and get a removability characterization as a consequence. Let $\k^{\varrho}$ and $\k^{\delta}$ denote the Campanato and H\"{o}lder capacities respectively. Our two other main results are the following: 

\begin{thm}\label{intro-cap}
    Let $\L$ be a left-invariant differential operator homogeneous of degree $\l \in [1, Q)$ such that both $\L$ and $\L^t$ are hypoelliptic. For a compact set $K \subset \G$ and $\varrho \in [\l, Q]$ we have, \begin{align*}
        \h^{\varrho -\l}(K) \lesssim \k^{\varrho}(K) \lesssim \h^{\varrho -\l}(K).
    \end{align*}
\end{thm}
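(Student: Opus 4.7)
The plan is to prove the two inequalities separately, following a Harvey--Polking/Paramonov strategy adapted to the Carnot setting.

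For the upper bound $\k^{\varrho}(K) \lesssim \h^{\varrho-\l}(K)$: given $\varepsilon > 0$, I would choose a covering $\{B(x_i, r_i)\}$ of $K$ with $\sum_i r_i^{\varrho - \l} \leq \h^{\varrho - \l}(K) + \varepsilon$, and apply the Carnot Harvey--Polking partition of unity \cite[Lemma 3.8]{CT} to obtain $\phi_i \in C_0^\infty(\G)$ satisfying $\supp \phi_i \subset c B_i$, $\sum_i \phi_i \equiv 1$ on a neighborhood of $K$, and $\|\L^t \phi_i\|_\infty \lesssim r_i^{-\l}$. For any admissible $f$, I would expand
\[
\langle \L f, 1 \rangle \;=\; \sum_i \langle f, \L^t \phi_i \rangle \;=\; \sum_i \langle f - f_{c B_i}, \L^t \phi_i \rangle,
\]
using that $\int \L^t \phi_i = \langle \phi_i, \L 1 \rangle = 0$, since $\L$ has positive degree and therefore kills constants. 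The Campanato normalization on $f$, together with $\|\L^t \phi_i\|_\infty \lesssim r_i^{-\l}$, then bounds each summand by a constant times $r_i^{-\l} \cdot r_i^{\varrho} = r_i^{\varrho - \l}$. Summation and passage to the infimum over coverings yields the upper bound.

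For the lower bound $\h^{\varrho - \l}(K) \lesssim \k^{\varrho}(K)$: I would invoke Frostman's lemma in the doubling metric space $\G$ to produce a nonnegative Borel measure $\mu$ supported on $K$ with $\mu(B(x,r)) \leq r^{\varrho - \l}$ and $\mu(K) \gtrsim \h^{\varrho - \l}(K)$. Setting $f := \cg * \mu$, where $\cg$ is the fundamental solution of $\L$ (homogeneous of degree $\l - Q$ by \cite[Theorem 2.1]{Fo}), gives $\L f = \mu$ distributionally, so $\supp \L f \subset K$ and $\langle \L f, 1 \rangle = \mu(K) \gtrsim \h^{\varrho - \l}(K)$. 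It remains to verify that $f$ has uniformly bounded Campanato seminorm; once this is established, $\mu$ (appropriately normalized) is admissible in the definition of $\k^{\varrho}$ and the lower bound follows.

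The main obstacle is this Campanato estimate for the potential $\cg * \mu$. Fixing a ball $B = B(y_0, r)$, I would split $\mu = \mu|_{3B} + \mu|_{(3B)^c}$. The contribution of $\mu|_{3B}$ is estimated via Fubini and the size bound $|\cg(z)| \lesssim d(0, z)^{\l - Q}$, yielding $\int_B |\cg * (\mu|_{3B})| \lesssim \mu(3B) \cdot r^{\l} \lesssim r^{\varrho - \l} \cdot r^{\l} = r^{\varrho}$. The contribution of $\mu|_{(3B)^c}$ is smooth on $B$, and its oscillation is controlled using bounds on horizontal derivatives of $\cg$ (homogeneous of degree $\l - Q - 1$) on dyadic annuli at $y_0$, combined with the growth condition on $\mu$. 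Crucially, these estimates rely only on size bounds on $\cg$ and its derivatives, so the non-commutativity of $\G$ that created difficulties in the Lipschitz case is not an obstruction here. Combining the two contributions gives $\int_B |f - f_B| \lesssim r^{\varrho}$ uniformly in $B$, completing the proof.
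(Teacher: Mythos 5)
Your proposal follows essentially the same route as the paper: the lower bound via Frostman's lemma and the Campanato estimate for the potential of the Frostman measure (split into a near part handled by Fubini and the size bound $|\cg(p)|\lesssim \|p\|^{\l-Q}$, and a far part handled by the gradient-type estimate for $\cg$ on dyadic annuli), and the upper bound via the Carnot Harvey--Polking partition of unity subordinate to a near-optimal cover, pairing $f-c_i$ against the derivatives of the bump functions. Two minor points to fix in the write-up. First, in a noncommutative group the order of convolution matters: you need $f=\mu\ast\cg$ (i.e.\ $f(x)=\int\cg(y^{-1}\cdot x)\,d\mu(y)$) rather than $\cg\ast\mu$, since left-invariance of $\L$ gives $\L(\mu\ast\cg)=\mu\ast\L\cg=\mu$, whereas the other order does not. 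Second, the partition of unity of \cite[Lemma 3.8]{CT} is subordinate to the dyadic tiles $T_w$, not to arbitrary balls, so your ball cover must first be converted to an essentially disjoint tile cover of comparable content (this is the role of the comparison between $\mathcal{H}^s_{\mathcal{D},\infty}$ and $\mathcal{H}^s_{\infty}$ in the paper); relatedly, since $\sum_i\varphi_i\equiv 1$ is only guaranteed on $\bigcup_i T_{w_i}$ rather than on an open neighborhood of $K$, one should insert a cutoff $\psi\equiv 1$ near $K$ and absorb the resulting product-rule cross terms, which is exactly what the paper's extra $\beta\neq\overline{\beta}$ terms accomplish.
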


\begin{thm}\label{intro-hold}
 Let $\L$ be a left-invariant differential operator homogeneous of degree $\l \in [1, Q)$ such that both $\L$ and $\L^t$ are hypoelliptic. For a compact set $K \subset \G$ and $\delta \in (0,1)$ we have, \begin{align*}
        \h^{Q -\l+\delta}(K) \lesssim \k^{\delta}(K) \lesssim \h^{Q -\l+\delta}(K).
    \end{align*}   
\end{thm}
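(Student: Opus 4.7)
The plan for Theorem \ref{intro-hold} is to establish the two-sided estimate by mirroring the Harvey-Polking template for $\R^n$, leveraging the Carnot-group Harvey-Polking partition of unity from \cite[Lemma 3.8]{CT} together with the homogeneity of the fundamental solution $\cg$ of $\L$ guaranteed by \cite[Theorem 2.1]{Fo}. This parallels Theorem \ref{intro-cap}, but with the H\"older seminorm replacing the Campanato seminorm.

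For the upper bound $\k^{\delta}(K) \lesssim \h^{Q-\l+\delta}(K)$, I would fix an admissible $f$ with $\|f\|_{C^{0,\delta}} \leq 1$ and $\supp \L f \subset K$, and fix a cover of $K$ by balls $B_i = B(x_i, r_i)$ whose $(Q-\l+\delta)$-weighted sum approximates $\h^{Q-\l+\delta}(K)$. Choose the Carnot Harvey-Polking partition of unity $\{\phi_i\}$ subordinate to $\{2B_i\}$, with $\supp \phi_i \subset 2 B_i$ and $\|\L^t \phi_i\|_{\infty} \lesssim r_i^{-\l}$. Since $\L$ annihilates constants, $\int \L^t \phi_i = 0$, so
\begin{equation*}
|\langle \L f, \phi_i\rangle| = |\langle f - f(x_i), \L^t \phi_i\rangle| \lesssim r_i^{\delta} \cdot r_i^{-\l} \cdot |2 B_i| \lesssim r_i^{Q-\l+\delta}.
\end{equation*}
Summing over $i$ and using that $\sum \phi_i \equiv 1$ on a neighbourhood of $K \supset \supp \L f$ yields $|\langle \L f, 1\rangle| \lesssim \sum r_i^{Q-\l+\delta}$; taking the infimum over covers and then the supremum over $f$ gives the bound.

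For the lower bound $\h^{Q-\l+\delta}(K) \lesssim \k^{\delta}(K)$, I would invoke a Frostman-type lemma on $\G$ to produce a nonnegative measure $\mu$ supported on $K$ with $\mu(K) \gtrsim \h^{Q-\l+\delta}(K)$ and the growth bound $\mu(B(x,r)) \leq r^{Q-\l+\delta}$. Setting $f := \cg \ast \mu$ gives $\L f = \mu$ distributionally, with $\supp \L f \subset K$ and $\langle \L f, 1\rangle = \mu(K)$. The essential step is to verify that $f \in C^{0,\delta}(\G)$ with seminorm bounded by a constant depending only on the homogeneity data of $\cg$: using the pointwise bound $|\cg(x)| \lesssim d(x)^{\l - Q}$ together with the off-diagonal difference estimate $|\cg(y^{-1} x) - \cg(y^{-1} x')| \lesssim d(x, x') \max\{d(y^{-1}x), d(y^{-1}x')\}^{\l - Q - 1}$ (valid by the homogeneity of degree $\l - Q$ of $\cg$ and smoothness away from the origin, itself a consequence of hypoellipticity of $\L^t$), one splits the integral representing $f(x) - f(x')$ into the near region $\{y : d(y^{-1}x) \leq 2 d(x, x')\}$ — where the direct bound on $\cg$ combined with the Frostman growth of $\mu$ handles both pieces separately — and its complement, where the difference estimate together with a layer-cake decomposition on $\mu$ yields the $d(x,x')^{\delta}$ control. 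Normalizing $f$ by this H\"older constant makes it admissible for $\k^{\delta}(K)$, giving the desired inequality.

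The principal obstacle is the H\"older estimate for the Riesz-type potential $\cg \ast \mu$ in the Carnot setting: the gauge distance and homogeneous dimension $Q$ must replace their Euclidean counterparts, but more delicately, the requisite mean-value-type inequality for $\cg$ has to be derived from its homogeneity degree $\l - Q$ and the smoothness of $\cg$ away from the origin, without recourse to any commutativity of horizontal derivatives. The restriction $\l \in [1, Q)$ enters essentially here: $\l < Q$ keeps $\cg$ locally integrable so the potential is well-defined, while $\l \geq 1$ ensures the difference estimate has the correct sign on the exponent $\l - Q - 1$ for the layer-cake summation to converge.
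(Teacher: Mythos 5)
Your proposal is correct and follows essentially the same route as the paper: the upper bound via the Carnot Harvey--Polking partition of unity with the cancellation $\int \L^t\phi_i = 0$ and the H\"older oscillation bound, and the lower bound via a Frostman measure with growth $r^{Q-\l+\delta}$ and the near/far splitting of $\cg\ast\mu$ using the homogeneity and difference estimates for $\cg$ (the paper phrases the upper bound through its dyadic tiles and a comparability of dyadic and ordinary Hausdorff contents, but the mechanism is the same). One small correction to your closing remark: the convergence of the layer-cake sums is governed by $\delta\in(0,1)$, not by $\l\geq 1$.
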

The idea for proving Theorems \ref{intro-cap} and \ref{intro-hold} was inspired by a result of Verdera \cite{Ver}.

We also consider removable sets for $L_{\text{loc}}^p$, $p \in [1, \infty)$, which we can characterize in a similar way to showing Theorem \ref{lip-cap-char}. 

This paper is organized as follows. Section \ref{section-2} will give the necessary background about Carnot groups and differential operators. Section \ref{section-3} describes a generalization of the Harvey-Polking lemma and the version of Hausdorff measure that is more conducive to our calculations. The rest of the sections are devoted to characterizing removable sets through capacities for various function spaces.

\begin{subsection}*{Acknowledgments}
I want to thank my advisor Vasilis Chousionis for introducing this topic to me and providing many helpful comments during the process of writing this paper. I also thank Surath Fernando for helpful advice and comments during this course of this project. I was partially supported by the NSF grant 2247117.
\end{subsection}

\vskip2em

\section{Definitions and Preliminaries}\label{section-2}
\begin{subsection}{Carnot groups}
A \textit{Carnot group}, denoted $\G$, is a connected, simply connected, and nilpotent Lie group whose Lie algebra admits a stratification of the form \begin{align*}
    \g = \vf_1 \oplus \cdots \oplus \vf_s, \quad [\vf_1, \vf_i] = \vf_{i+1} \text{ for }i = 1, \cdots, s-1, \quad [\vf_1, \vf_s]= \{0\}
\end{align*}
and $\vf_1, \cdots, \vf_s$ are non-zero subspaces of $\g$. We call the integer $s \geq 1$ the \textit{step} of $\G$. Denote the group law of $\G$ by $\cdot$ the identity element $\G$ by $0$. We write $\G = (\R^N, \cdot)$. Notice that the condition $[\vf_1, \vf_i] = \vf_{i+1}$ for $i = 1, \cdots, s-1$ means that $\vf_1$ generates $\g$ through the Lie algebra. We call $\vf_1$ the \textit{horizontal layer} of the Lie algebra and the elements of $\vf_1$ are called \textit{horizontal vector fields}. Let,\begin{equation}\label{dim-of-first-layer}
m_1 := \dim(\vf_1).
\end{equation} 

The elements of $\g$ are left-invariant vector fields and from the conditions of $\G$, the exponential map $\exp : \g \rightarrow \G$ is a global diffeomorphism. Elements in $\g$ then act on smooth functions $f$ by, 
\begin{equation}\label{defn-of-left-inv-vf}
(X f)(x) = Xf(x) = \frac{d}{dt}\bigg|_{t=0} \left( f(x\cdot \exp(tX) ) \right).  
\end{equation} 
Given any left-invariant vector field $X \in \g$ we can also define a right-invariant vector field $\wt{X}$ by, 
\begin{equation}\label{defn-of-right-inv-vf}
(\wt{X}f)(x) = \wt{X}f(x) = \frac{d}{dt}\bigg|_{t=0} \left( f(\exp(tX)\cdot x) \right).
\end{equation}
Notice that equations (\ref{defn-of-left-inv-vf}) and (\ref{defn-of-right-inv-vf}) are only defined once the functions $Xf$ and $\wt{X}f$ have been found. Once they have been found, their definition becomes a derivative in the $t$-variable, and the $x$-variable is then treated as a constant.

Let $X$ be a vector field in $\R^N$ of the form $X h(x)= \sum_{i=1}^N a_i(x)\partial_j h(x)$ where $h$ is a smooth enough function.. Let \begin{align*}
    XI(x) = \begin{pmatrix}
        a_1(x) \\ \vdots \\ a_N(x)
    \end{pmatrix}
\end{align*}
and notice \begin{equation}\label{XI-notation}
    Xh(x) = \langle \nabla h(x), XI(x)\rangle
\end{equation} where $\nabla$ is the Euclidean gradient.

Fix an inner product on $\vf_1$. From here on out, we will take a basis to $\vf_1$ to consist of a certain kind of orthonormal basis of elements, called the \emph{Jacobian basis} of $\vf_1$. We describe its properties now.
\begin{prop}\cite[Proposition 1.2.16]{BLU}
Let $j \in \{1, \cdots, N\}$ be given. There exists one and only one vector field, denoted by $X_j$, characterized by any one of the following equivalent conditions:
\begin{enumerate}
    \item[(a)]$X_j \vert_{0} = \partial_j \vert_{0}$, i.e. \begin{align*}
        X_j \varphi(0) = \partial_j\varphi(0) \quad \text{ for all }\varphi\in C^{\infty}. 
        \end{align*}
    \item[(ii)] If $e_j$ denotes the $j$-th element of the canonical basis of $\R^N$, then \begin{align*}
    X_j(0)= e_j.
    \end{align*}
\end{enumerate}
\end{prop}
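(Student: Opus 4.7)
The plan is to reduce the claim to the standard Lie-theoretic fact that evaluation at the identity gives a vector space isomorphism between $\g$ and $T_0 \G$.

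First I would verify that (a) and (ii) are equivalent. Under the global coordinates $\G = (\R^N, \cdot)$, the tangent space $T_0 \G$ is canonically identified with $\R^N$ via $\partial_j|_0 \leftrightarrow e_j$, so the equalities $X_j|_0 = \partial_j|_0$ and $X_j(0) = e_j$ express the same statement verbatim. Thus I only need to exhibit a unique left-invariant vector field whose value at $0$ is the prescribed tangent vector $e_j$.

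Next I would establish existence and uniqueness of the left-invariant lift. For uniqueness, if $X, Y \in \g$ satisfy $X|_0 = Y|_0$, then formula (\ref{defn-of-left-inv-vf}) yields
\[
X\varphi(x) = \frac{d}{dt}\bigg|_{t=0} \varphi(x \cdot \exp(tX)) = (X|_0)(\varphi \circ L_x),
\]
where $L_x$ denotes left-translation by $x$, with the analogous identity for $Y$; equality of $X|_0$ and $Y|_0$ then forces $X\varphi \equiv Y\varphi$ for all test functions, hence $X = Y$. For existence, given the target tangent vector $v = \partial_j|_0 = e_j$, I would define $X_j$ by $X_j \varphi(x) := v(\varphi \circ L_x)$. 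Associativity of the group law immediately gives that this $X_j$ is left-invariant, and by construction its value at $0$ is $v$, yielding both (a) and (ii).

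The only subtlety is notational: one must check that the smooth manifold structure on $\G$ induced by the identification $\G = (\R^N, \cdot)$ is compatible with the Lie group structure, so that the formula (\ref{defn-of-left-inv-vf}) indeed produces a smooth vector field on $\R^N$ and thus fits into the later framework (\ref{XI-notation}). This compatibility is guaranteed by the Carnot group hypothesis, in particular by the fact that $\exp : \g \to \G$ is a global diffeomorphism as recorded at the start of Section \ref{section-2}. I do not anticipate any genuinely hard step; the content of the proposition is simply the unique left-invariant extension of $e_j \in T_0\G$ to all of $\G$.
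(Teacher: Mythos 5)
Your argument is correct. The paper offers no proof of this statement---it is quoted directly from \cite[Proposition 1.2.16]{BLU}---and your reduction to the unique left-invariant extension of a prescribed tangent vector at the identity is exactly the standard argument behind that citation; in coordinates your formula $X_j\varphi(x)=v(\varphi\circ L_x)$ is precisely the construction that gives the basis its name, since the coefficient vector $X_jI(x)$ is then the $j$-th column of the Jacobian matrix of the left translation $\tau_x$ at the origin. The one point worth stating explicitly is that ``one and only one vector field'' must be read as ranging over left-invariant vector fields, i.e.\ over $\g$ (otherwise uniqueness trivially fails); you use this tacitly in the uniqueness step, and it is the intended reading in \cite{BLU}.
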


An $X_j$ satisfying one of the above conditions will be called a Jacobian vector field, and we take the set of elements $\mathcal{X} = \{X_1, \cdots, X_{m_1}\}$ to be the Jacobian basis of $\vf_1$. We now say the \emph{sub-Laplacian} is the differential operator $\L$ defined as, \begin{align*}
    \L = \sum_{i=1}^{m_1} X_i^2 = \sum_{i=1}^{m_1} X_i(X_i).
\end{align*}
where each $X_i$ is a part of the Jacobian basis of $\vf_1$.

We will now define the \textit{Carnot-Carath\'{e}odory distance} on $\G$. This definition is taken from \cite[Definition 5.2.2]{BLU}. Let $\mathcal{X} = \{X_1,\cdots, X_{m_1}\}$ be the Jacobian basis for $\vf_1$. Then a piecewise smooth path $\gamma : [0,T] \rightarrow \G$ is called \textit{$\mathcal{X}$-subunit} if \begin{equation}\label{subunit}
    \langle \dot{\gamma}(t) ,\xi \rangle^2 \leq \sum_{i=1}^{m_1} \langle X_jI(\gamma(t)), \xi \rangle^2 \quad \text{ for all }\xi \in \R^N
\end{equation}
and almost everywhere (in the sense of Lebesgue measure on $\R$) $t \in [0,T]$. Set $S(\mathcal{X})$ to be the set of all $\mathcal{X}$-subunit paths and put \begin{align*}
    \ell(\gamma) = T
\end{align*}
if $[0,T]$ is the domain of $\gamma \in S(\mathcal{X})$. The Carnot-Carath\'{e}odory (CC) distance is then, \begin{equation}\label{cc-distance}
d(x,y) := \inf \left\{ \ell(\gamma): \gamma \in S(\mathcal{X}), \gamma(0) = x, \gamma(T)= y\right\}
\end{equation}
and the corresponding metric is, \begin{equation}\label{cc-metric}
 \| y^{-1}\cdot x\| := d(x,y).
\end{equation}
Open balls are then taken as $B(x,r) := \{y : d(y,x) < r\}$. The CC-distance is symmetric, i.e. $d(x,y) = d(y,x)$, which also implies $\| p^{-1}\| = \| p\|$ for all $p \in \G$. For $t > 0$ define $\delta_t : \g \rightarrow \g$ by $\delta_t(X) = t^i X$ if $X \in \vf_i$ and extend to all of $\g$ by linearity. By conjugation with the exponential map, $\delta_t$ induces an automorphism on $\G$ which we also denote by $\delta_t$. Then $(\delta_t)_{t>0}$ is the one-parameter family of \textit{dilations} on $\G$ which satisfy $d(\delta_t(x),\delta_t(y)) = td(x,y)$ for all $x,y \in \G$. Note that this implies $\| \delta_t(p)\| = t\|p\|$ for all $p\in \G$. The Jacobian determinant of $\delta_t$ is equal to $t^Q$ where \begin{align}
Q = \sum_{i=1}^s i \dim \vf_i
\end{align}
is the \textit{homogeneous dimension} of $\G$. We then have $\G$ is diffeomorphic with $\g = \R^N$, $N = \sum_{i=1}^s, \dim \vf_i$ by the exponential map. The Haar measure on $\G$ is induced by the exponential map from Lebesgue measure on $\g$ and the Haar measure agree, up to a constant, with the $Q$-dimensional Hausdorff measure in the metric space $(\G, d)$. In this paper, we will work almost exclusively with the CC-distance which we do simply because it satisfies the triangle inequality.

In the case of Lemma \ref{real-ibp}, the proof uses a different metric. The sub-Laplacian has a unique fundamental solution $\cg$. The proofs of the below facts can be found in \cite[Section 5]{BLU}. 
\begin{prop}
For all $t > 0$ and $p \in \G \backslash \{0\}$, \begin{enumerate}
    \item[(i)] $\cg(p^{-1}) = \cg(p)$,
    \item[(ii)] $\cg(\delta_t(p))= t^{2-Q}\cg(p)$,
    \item[(iii)] $\cg(p) > 0$.
\end{enumerate} 
\end{prop}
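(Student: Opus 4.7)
The plan is to deduce the three properties from three structural features of the sub-Laplacian: its formal self-adjointness, the dilation homogeneity of the horizontal vector fields, and the positivity of the heat semigroup. As algebraic setup I would first observe that, since Haar measure on the nilpotent group $\G$ is bi-invariant, integration by parts gives $X_i^t = -X_i$ for every left-invariant vector field, and hence $\L^t = \sum_i (X_i^t)^2 = \sum_i X_i^2 = \L$. This self-adjointness drives the proofs of (i) and (iii).

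For (ii) I would use only the homogeneity. Each horizontal vector field $X_i$ is dilation-homogeneous of degree one, so $X_i(f\circ \delta_t) = t\,(X_i f)\circ\delta_t$, and squaring and summing yields $\L(f\circ\delta_t) = t^2\,(\L f)\circ\delta_t$. Applied to $f=\cg$, and using the distributional identity $\delta_0\circ\delta_t = t^{-Q}\delta_0$ (the Jacobian of $\delta_t$ being $t^Q$), this gives $\L(\cg\circ\delta_t) = t^{2}(\L\cg)\circ\delta_t = t^{2-Q}\delta_0 = \L(t^{2-Q}\cg)$. Hence $\cg\circ\delta_t - t^{2-Q}\cg$ is globally $\L$-harmonic on $\G$; since $Q>2$ and both terms decay at infinity by construction of $\cg$, a Liouville-type uniqueness statement for $\L$ forces the difference to vanish, yielding (ii).

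For (i) I would pass to the heat semigroup $\{e^{t\L}\}_{t>0}$ and use the representation $\cg(p) = c\int_0^\infty h_t(p)\,dt$ for $p\neq 0$, where $h_t$ is the convolution kernel. Because $\L$ is self-adjoint, $e^{t\L}$ is self-adjoint, so its integral kernel $k_t(p,q)$ satisfies $k_t(p,q) = k_t(q,p)$. Left-invariance lets one write $k_t(p,q) = h_t(q^{-1}\cdot p)$; setting $x = q^{-1}\cdot p$ turns the symmetry into $h_t(x) = h_t(x^{-1})$, and integrating in $t$ transfers this to $\cg$. Property (iii) then falls out of the same representation: the sub-Laplacian is a sum of squares of H\"ormander-type vector fields, so $h_t$ is smooth and strictly positive by the parabolic minimum principle, and integrating in $t$ gives $\cg>0$.

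The main obstacle I expect is the Liouville step in (ii): one must justify that a globally $\L$-harmonic function vanishing at infinity is identically zero, which in the Carnot setting rests on the hypoellipticity of $\L$ together with sharp decay bounds for $\cg$. A tempting shortcut for (i) via the inversion identity $\L(f\circ\mathrm{inv}) = (\widetilde{\L} f)\circ\mathrm{inv}$, where $\widetilde{\L} = \sum_i \widetilde{X}_i^2$ is the right-invariant companion of $\L$, runs into the fact that $\L$ and $\widetilde{\L}$ are genuinely distinct operators on a non-abelian $\G$, so that route would require the separate identity $\widetilde{\L}\cg = \delta_0$; routing through the self-adjoint heat semigroup avoids this complication cleanly.
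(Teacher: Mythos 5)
Your outline is sound, and it is worth noting that the paper itself offers no proof of this proposition: it simply defers to \cite[Section 5]{BLU} (and, for homogeneity, implicitly to Folland's construction of the kernel of type $2$). Your argument for (ii) is exactly Folland's: $\L(\cg\circ\delta_t)=t^{2-Q}\delta$, hypoellipticity makes the difference $\cg\circ\delta_t-t^{2-Q}\cg$ smooth and $\L$-harmonic, and Bony's maximum principle for H\"ormander sum-of-squares operators together with decay at infinity kills it; you correctly flag that the decay must come from the heat-kernel construction (Gaussian upper bounds giving $\cg(x)\lesssim\|x\|^{2-Q}$) rather than from homogeneity itself, which would be circular. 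Your proof of (iii) via strict positivity of $h_t$ is the standard one. For (i) you diverge from \cite{BLU}: there the symmetry is obtained by observing that $x\mapsto\cg(x^{-1})$ is a fundamental solution of $\L^t=\L$ and invoking uniqueness of the (vanishing-at-infinity) fundamental solution, whereas you route through self-adjointness of $e^{t\L}$ and the identity $h_t(x)=h_t(x^{-1})$. Both work; the uniqueness argument is shorter given that uniqueness has already been established for (ii), while your semigroup argument has the merit of avoiding the inversion identity $\L(f\circ\mathrm{inv})=(\wt{\L}f)\circ\mathrm{inv}$ entirely, as you note. Two small caveats: self-adjointness of $e^{t\L}$ requires essential self-adjointness of $\L$ on $C_0^\infty$ (standard for sub-Laplacians, but an input); and there is a sign-convention mismatch in the paper itself, since $\L\cg=\delta$ with $\L=\sum X_i^2$ negative would force $\cg<0$ (as with $-c_n|x|^{2-n}$ in $\R^n$) --- your unspecified constant $c$ in $\cg=c\int_0^\infty h_t\,dt$ quietly absorbs this, but you should fix $c$ consistently with whichever normalization of the Dirac mass is intended.
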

These facts allow us to define a norm in terms of $\cg$. The function, 
\begin{equation}
\| p\|_{\cg}=
    \begin{cases}
        \cg(p)^{1/(2-Q)} & \text{if } p \in \G \backslash \{0\}\\
        0 & \text{if } p =0
    \end{cases} \label{cg-norm}
\end{equation}
defines a symmetric homogeneous norm which is $C^{\infty}$ away from the origin. Symmetric means $\| p^{-1}\|_{\cg} = \|p\|_{\cg}$ for all $p \in \G$. Now let, \begin{align}
    d_{\cg}(x,y) = \| y^{-1}\cdot x\|_{\cg} \label{cg-dist}
\end{align}
be the left-invariant quasi-distance induced by $\| \cdot \|_{\cg}$.

Taking $\mathcal{X} = \{X_1, \cdots , X_{m_1}\}$ to be the Jacobian basis of $\vf_1$, we define the \emph{horizontal gradient} by \begin{equation}\label{horiz-gradient}
    \nabla_{\G}f = (X_1f, \cdots , X_{m_1}f)
\end{equation}
where $f$ is some real-valued function defined on an open set of $\G$. Moreover, we can define the \emph{horizontal divergence} by \begin{align*}
\text{div}_{\G} F = X_1 F_1 + \cdots + X_{m_1}F_{m_1}   
\end{align*} 
where $F = (F_1, \cdots, F_{m_1})$ is a vector field defined on some open set of $\G$. Lastly, the convolution operation $\ast$ will be taken to mean \begin{equation}\label{convolution}
f \ast g(x) := \int f(y) g(y^{-1}\cdot x)\,dy = \int f(x \cdot p^{-1})g(p)\,dp.    
\end{equation} 
Convolutions are not commutative since $\G$ is nonabelian. For more background on Carnot groups, see \cite{BLU} and \cite{FR}.
\end{subsection}

\vskip2em

\begin{subsection}{Left-invariant homogeneous operators and fundamental solutions.} For any $A \subset \G$ we will let $C_0^{\infty}(A)$ be the set of all $C^{\infty}$ functions with compact support contained in $A$. For $\Omega$ open denote by $\D'(\Omega)$ the space of distributions on $\Omega$ whose test space is $C_0^{\infty}(\Omega)$ and equip $\D'(\Omega)$ with the locally convex topology, and set $\D' = \D'(\G)$. When talking about distributions, the symbol $\langle T, \varphi\rangle$ will denote the pairing of a distribution $T \in \mathcal{D}'(\Omega)$ and a test function $\varphi \in C_0^{\infty}(\Omega)$.

We say that a distribution $T \in \D'(\Omega)$ is \textit{homogeneous of degree $\l$} provided $\langle T, \varphi \circ \delta_t\rangle = t^{-Q-\l}\langle T, \varphi \rangle$ for all $\varphi \in C_0^{\infty}(\Omega)$. A distribution which is $C^{\infty}$ away from $0$ and homogeneous of degree $\l-Q$ will be called a \textit{kernel of type $\l$}. A differential operator $\L$ will be called \textit{homogeneous of degree $\l$}, or $\l$-homogeneous, if \begin{equation}\label{hom-diff-op}
 \L ( T \circ \delta_t) = t^{\l}(\L T)\circ \delta_t\,\,\,\,\text{ for all }T \in \D'. 
\end{equation}

Furthermore, a differential operator $\L$ is said to be \textit{hypoelliptic} if for any open $\Omega \subset \G$ and any two distriubions $T$ and $F$ on $\Omega$ satisfying $\L T = F$, then $F \in C^{\infty}(\Omega)$ implies $T \in C^{\infty}(\Omega)$. We call a function $f$ satisfying $\L f = 0$ an $\L$-solution.

A fundamental result in sub-Riemannian geometry, due to Folland, deals with the existence of fundamental solutions for such differential operators. Let $\L^t$ denote the transpose of a differential operator $\L$.
\begin{thm}\cite[Theorem 2.1]{Fo}\label{folland-fund-soln} Let $\L$ be a differential operator on a Carnot group. Suppose $\L$ is homogeneous of degree $\l$, $0 < \l < Q$, such that both $\L$ and $\L^t$ are hypoelliptic. Then there exists a unique kernel $K$ of type $\l$ which is a fundamental solution of $\L$ at $0$, i.e. $\L K = \delta$.
\end{thm}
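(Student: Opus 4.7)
The plan is to produce $K$ in two stages: first construct some fundamental solution $E$ of $\L$ by exploiting hypoellipticity of $\L^t$, and then extract from $E$ the piece that is homogeneous of degree $\l-Q$. Uniqueness will come from hypoellipticity of $\L$ together with the strict inequality $\l - Q < 0$.

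For the existence of an initial $E$ with $\L E = \delta$, I would follow the Carnot-group analogue of the Malgrange--Ehrenpreis construction. Hypoellipticity of $\L^t$ yields, via the closed graph theorem, local a priori estimates of the form
\begin{equation*}
\| v\|_{C^k(\Omega')} \leq C\bigl(\| \L^t v\|_{C^k(\Omega)} + \| v\|_{L^1(\Omega)}\bigr)
\end{equation*}
for relatively compact $\Omega' \Subset \Omega \subset \G$. These rescale correctly under $\delta_t$ thanks to the $\l$-homogeneity of $\L^t$, and in particular force $\L^t:C_0^\infty(\G) \to C_0^\infty(\G)$ to be injective. A Hahn--Banach/duality argument then extends the linear functional $\L^t\varphi \mapsto \varphi(0)$, defined on the image $\L^t(C_0^\infty(\G))$, to all of $C_0^\infty(\G)$, producing a distribution $E \in \D'(\G)$ with $\L E = \delta$.

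Next I would homogenize $E$. The $\l$-homogeneity of $\L$ gives
\begin{equation*}
\L\bigl(t^{Q-\l}\, E \circ \delta_t\bigr) = \delta \qquad \text{for every } t > 0,
\end{equation*}
so the family $E_t := t^{Q-\l}\, E \circ \delta_t$ consists entirely of fundamental solutions, and any two differ by a global $\L$-solution that is $C^\infty(\G)$ by hypoellipticity of $\L$. A homogeneous fundamental solution is then obtained as
\begin{equation*}
\langle K, \varphi \rangle := \lim_{t \to 0^+} \langle E_t, \varphi \rangle,
\end{equation*}
once the limit is justified, since the smooth non-homogeneous remainder must scale away. Equivalently, one can apply a Mellin transform along the dilation flow, which diagonalizes the one-parameter family $\{E_t\}$ and isolates the homogeneous component in a single step. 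By construction $K$ is homogeneous of degree $\l - Q$ with $\L K = \delta$, and hypoellipticity of $\L$ applied to $\L K = 0$ on $\G \setminus \{0\}$ then gives $K \in C^\infty(\G \setminus \{0\})$, so $K$ is a kernel of type $\l$.

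For uniqueness, if $K_1, K_2$ are two such kernels then $u := K_1 - K_2$ solves $\L u = 0$ on all of $\G$, hence $u \in C^\infty(\G)$ by hypoellipticity. But $u$ is also homogeneous of degree $\l - Q < 0$, and passing to the limit in $u(\delta_t x) = t^{\l-Q} u(x)$ as $t \to 0^+$ forces $u(x) = 0$ for all $x \neq 0$, so $K_1 = K_2$. The main obstacle in this plan is the homogenization step: one must show uniform control of the smooth remainder $E - E_t$ paired against a fixed test function as $t \to 0^+$, which requires pushing the hypoelliptic a priori estimates through the dilation rescaling. The Mellin-transform route sidesteps this delicate uniformity argument at the cost of extra harmonic-analytic machinery.
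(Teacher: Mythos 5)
This statement is quoted from Folland \cite[Theorem 2.1]{Fo}; the paper gives no proof of its own, so the only meaningful comparison is with Folland's original argument --- and your sketch follows essentially the same strategy he does: obtain a fundamental solution from hypoellipticity of $\L^t$ by duality, rescale it by the dilations, pass to a limit to extract the homogeneous part, and get uniqueness from the fact that the difference of two kernels is an entire smooth $\L$-solution homogeneous of negative degree. Your uniqueness argument is complete and correct as written.

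Two points in the existence half deserve flagging. First, the Hahn--Banach/duality step in general produces only a \emph{local} fundamental solution near $0$ (injectivity of $\L^t$ on $C_0^\infty$ is not enough; you need the a priori estimate $|\varphi(0)| \le C\|\L^t\varphi\|$ on test functions supported in a fixed small neighborhood, and the lower-order term $\|\varphi\|_{L^1}$ must be absorbed by a compactness argument). This is harmless, because $E\circ\delta_t$ with $t\to 0^+$ is defined on an expanding neighborhood, but you should not claim $E\in\D'(\G)$ at the outset. Second, the gap you yourself identify --- convergence of $E_t$ as $t\to 0^+$ --- is exactly the crux, and the clean way to close it (Folland's way) is a telescoping series rather than uniform estimates: fix $r>1$, note $g:=E_{1/r}-E$ is smooth by hypoellipticity of $\L$, and write $E_{r^{-(k+1)}}-E_{r^{-k}} = r^{-k(Q-\l)}\, g\circ\delta_{r^{-k}}$, which converges geometrically on compact sets precisely because $Q-\l>0$; this is where the hypothesis $\l<Q$ enters. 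The resulting limit $K$ is a priori only invariant under the discrete family $\delta_{r^{-k}}$, so one further short argument (showing $K_s-K$ is a smooth function forced to vanish by its scaling behavior, just as in your uniqueness step) is needed to upgrade to homogeneity under all dilations. With those two repairs your outline is a faithful reconstruction of Folland's proof; the Mellin-transform variant you mention is not needed.
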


For $\a, x \in \G$ define $\tau_{\a} : \G \rightarrow \G$ by $\tau_{\a}(x) := \a \cdot x$. Note that by definition of a Lie group, $\tau_{\a}$ is smooth. Then a differential operator $\L$ is \textit{left-invariant} if \begin{equation}\label{left-inv-op}
\L (T \circ \tau_{\a}) = (\L T)\circ \tau_{\a} \, \quad \text{ for all }\a \in \G \text{ and }T \in \D'    
\end{equation}
\begin{exmp}
Let $\{X_1, \cdots, X_{m_1}\}$ be a basis for $\vf_1$. Then $\L := \sum_{i=1}^{m_1}X_i^2$ is called a \textit{sub-Laplacian} on $\G$. It is easily checked that $\L$ is left-invariant, homogeneous of degree $2$, and self-adjoint. Furthermore, by H\"{o}rmander's theorem \cite[Theorem $1$ in the preface]{BLU}, $\L$ is hypoelliptic.   
\end{exmp}

\begin{defn}\label{removable-sets-defn}
Let $\L$ be a linear partial differential equation and $\mathcal{F}$ a space of distributions. A set $K \subset \G$ is \textit{removable for $\L$-solutions in $\mathcal{F}$} if for any domain $\Omega$ and any element $f \in \mathcal{F}(\Omega)$ satisfying $\L f = 0$ in $\Omega \backslash K$ also satisfies $\L f = 0$ in $\Omega$.
\end{defn}

\begin{rem}
Notice that this definition of removability does not assume $K$ is a subset of $\Omega$. This adds some interesting complexities in understanding removable sets. For example, a natural question one may ask is if removability is monotonic? In other words, if $E$ is removable (with respect to a class of distributions and a PDE) and $F \subset E$, does this imply $F$ is also removable? When $K$ is not assumed to be a subset of $\Omega$ as in Definition \ref{removable-sets-defn}, it is not clear from the definition if monotonicity holds. However, for the differential operators and function spaces we consider, the results in this paper will in fact show that removability is monotonic.
\end{rem}

We record a simple, but very useful, proposition and the proof of which can be found in \cite[Page $32$]{BLU}.
\begin{prop}\label{hom-degree-of-deriv}
Let $f$ be a smooth homogeneous function of degree $m \in \R$ and $X$ be a linear differential operator of degree $n \in \R$. Then $Xf$ is a homogeneous function of degree $m-n$.
\end{prop}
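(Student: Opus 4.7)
The plan is to derive the homogeneity of $Xf$ by applying the definition of an $n$-homogeneous operator to $f$ itself, and then use the homogeneity of $f$ on the other side. Concretely, homogeneity of a smooth function of degree $m$ (as a distribution) unpacks, after a change of variables via $\delta_t$ (which has Jacobian $t^Q$), to the pointwise identity
\begin{equation*}
    f \circ \delta_t = t^{m}\, f,
\end{equation*}
so the first step is to note this equivalence between the distributional formulation from the excerpt and the pointwise statement that $f(\delta_t(x)) = t^m f(x)$ for all $x \in \G$ and $t > 0$.

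Next, I would apply the definition \eqref{hom-diff-op} of an $n$-homogeneous differential operator to the distribution $T = f$. This yields
\begin{equation*}
    X(f \circ \delta_t) \;=\; t^{n}\, (Xf)\circ \delta_t.
\end{equation*}
Substituting the pointwise homogeneity of $f$ into the left-hand side and using the linearity of $X$ to pull the scalar $t^m$ outside, the left-hand side becomes $t^{m}\, Xf$. Dividing by $t^n$ then gives
\begin{equation*}
    (Xf)\circ \delta_t \;=\; t^{m-n}\, Xf,
\end{equation*}
which is exactly the assertion that $Xf$ is homogeneous of degree $m-n$.

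There is essentially no obstacle here: the whole content of the proposition is a two-line manipulation combining the two homogeneity hypotheses. The only bookkeeping worth being explicit about is the translation between the distributional definition $\langle T, \varphi \circ \delta_t \rangle = t^{-Q-\lambda}\langle T, \varphi\rangle$ and the pointwise scaling $f\circ\delta_t = t^m f$ for a smooth function, which is a routine change of variables accounting for the Jacobian determinant $t^{Q}$ of $\delta_t$. Once that identification is in hand, the proof is a direct calculation and no Carnot-group-specific difficulty enters.
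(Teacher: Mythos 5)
Your argument is correct and is exactly the standard proof (the paper itself only cites \cite[Page 32]{BLU} for this fact): apply the operator-homogeneity identity (\ref{hom-diff-op}) to $T=f$, replace $f\circ\delta_t$ by $t^m f$ using the pointwise homogeneity of $f$, pull out the scalar by linearity, and divide by $t^n$. The side remark identifying the distributional scaling condition with the pointwise identity $f\circ\delta_t=t^m f$ via the Jacobian $t^Q$ is also accurate, so nothing is missing.
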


\begin{rem}\label{deriv-of-fund-soln}
It is simple to check that the same proof which gives Proposition \ref{hom-degree-of-deriv} also works when we relax the smoothness condition to $f \in C^{\infty}(\G \backslash \{0\})$. In particular, if $k$ is a kernel of type $\l$ and $X$ a linear differential operator of degree $m$, then since $k$ is a smooth function on $\G \backslash \{0\}$, we have $X k$ is both smooth and homogeneous of degree $\l - Q - m$ on $\G \backslash \{0\}$. This follows since $k$ is smooth and homogeneous of degree $\l - Q$ on $\G \backslash \{0\}$.
\end{rem}
The following is well known for those who study sub-Riemannian geometry, but we record the proof anyway. 
\begin{prop}\label{hom-func-bound}
Suppose $f$ is both homogeneous of degree $\l$ and continuous on $\G \backslash \{0\}$. Then, $$|f(p)| \leq C \| p\|^{\l}$$ for a constant $C>0$ which only depends on $f$ and $\G$. 
\end{prop}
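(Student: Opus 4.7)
The plan is to exploit homogeneity to reduce the estimate to the unit sphere $S = \{q \in \G : \|q\| = 1\}$, and then use continuity plus compactness of $S$ to extract the uniform bound.

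First I would translate what it means for a function to be homogeneous of degree $\l$ in the sense of the paper's definition for distributions. A routine change of variables $y = \delta_t(x)$, using that the Jacobian of $\delta_t$ equals $t^Q$, shows that the distributional condition $\langle f, \varphi \circ \delta_t\rangle = t^{-Q-\l}\langle f, \varphi\rangle$ for a continuous function $f$ is equivalent to the pointwise identity
\begin{equation*}
    f(\delta_t(p)) = t^{\l} f(p) \quad \text{for all } t > 0, \ p \in \G \setminus \{0\}.
\end{equation*}

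Next, given any $p \in \G \setminus \{0\}$, set $r := \|p\|$ and $q := \delta_{1/r}(p)$. Using the property $\|\delta_t(p)\| = t\|p\|$ noted in the excerpt, we have $\|q\| = 1$, so $q \in S$, and $p = \delta_r(q)$. By the pointwise homogeneity above,
\begin{equation*}
    |f(p)| = |f(\delta_r(q))| = r^{\l}|f(q)| = \|p\|^{\l}|f(q)|.
\end{equation*}

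Finally, I would show that $S$ is compact and conclude by setting $C := \sup_{q \in S}|f(q)|$. Compactness of $S$ follows because the homogeneous norm $\|\cdot\|$ (the CC-distance from the origin) is continuous with respect to the Euclidean topology on $\G = \R^N$, so $S$ is closed; moreover, the CC metric and the Euclidean metric induce the same topology on $\G$, so any CC-bounded set, in particular $S \subset \overline{B(0,1)}$, is Euclidean-bounded. Heine–Borel then makes $S$ compact, and continuity of $f$ on $\G \setminus \{0\} \supset S$ ensures that $|f|$ attains a finite maximum $C$ on $S$, yielding $|f(p)| \leq C\|p\|^{\l}$. The main (minor) obstacle is justifying compactness of $S$, which relies on the equivalence of the CC and Euclidean topologies; this is a standard fact about Carnot groups that I would invoke rather than prove.
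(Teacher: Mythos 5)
Your proposal is correct and follows essentially the same argument as the paper: reduce to the unit sphere $\Sigma = \{q : \|q\|=1\}$ via the dilation $\delta_{\|p\|^{-1}}$, use that $\Sigma$ is compact (since the CC and Euclidean topologies coincide) and $f$ is continuous there to get the constant $C$, and conclude by homogeneity. The only difference is your extra preliminary step checking that distributional homogeneity implies pointwise homogeneity for continuous functions, which the paper takes for granted.
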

\begin{proof}
Set $\Sigma := \{ p : \| p\| =1\}$. Since the topology on $\G$ coincides with the topology on $\R^N$, we get $\Sigma$ is compact as $\Sigma$ is closed and bounded. Furthermore, $\Sigma$ avoids the origin. Hence, by continuity, there exists $p_0 \in \Sigma$ such that $|f(p)| \leq |f(p_0)| =: C$ for all $p \in \Sigma$. Now fix any $p \in \G \backslash \{0\}$. Then by homogeneity of $f$, \begin{align*}
 |f(p)| &= |f(\delta_{\|p\|} \circ \delta_{\|p\|^{-1}}(p))| \\ &= \| p\|^{\l} |f(\delta_{\|p\|^{-1}}(p))| \\ &\leq  C \| p\|^{\l}
\end{align*}
which completes the proof.
\end{proof}

Lastly, some parts of this paper will use a specific kind of multi-index notation and product rule. Let $m_1$ be as in (\ref{dim-of-first-layer}). For a multi-index $\a = (\a_1,\cdots , \a_{\ell}) \in \{1,\cdots , m_1\}^{\ell}$ write \begin{equation}\label{multi-index-notation}
    X_{\a} = X_{\a_1}\cdots X_{\a_{\ell}} \quad \text{ and } \quad |\a| = \ell.
\end{equation} We describe a product rule, the main purpose of which is track the homogeneity degree of the operators involved. Fix $\a \in \{1, \cdots , m_1\}^{\ell}$. For $\beta = (\beta_1, \cdots, \beta_{\ell})$ with $\beta_i \in \{0,1\}$ for all $i$, set \begin{equation}\label{notation_for_X_B}
X_{\a}^{\beta} = X_{\a_1}^{\beta_1} \cdots X_{\a_{\ell}}^{\beta_{\ell}}.\end{equation} We make the convention of $X_{\a_i}^0$ to be the identity operator $I$, i.e. no derivatives are applied. Then $X_{\a_i}^{\beta_i} = I$ when $\beta_i = 0$ and $X_{\a_i}^{\beta_i} = X_{\a_i}$ when $\beta_i = 1$. Therefore we get the following: \begin{equation}\label{length-of-beta-index}
X_{\a}^{\beta} \text{ is of length }  \sum_{i=1}^{\ell}\beta_i.    
\end{equation}
Now for $\psi, \varphi \in C^{\infty}$, it can be checked that the product rule reads, \begin{equation}\label{product_rule}
X_{\a}(\psi \varphi) = \sum_{\beta \in \mathcal{B}}X_{\a}^{\beta}(\psi)X_{\a_1}^{1-\beta_1} \cdots X_{\a_{\ell}}^{1-\beta_{\ell}}(\varphi).    
\end{equation}
As a short hand, for a fixed $\beta$ set $\star{X}_{\a}^{\beta} = X_{\a_1}^{1-\beta_1} \cdots X_{\a_{\ell}}^{1-\beta_{\ell}}$ and $\overline{\beta} = \{ 0\}^{\ell}$. We lastly note that for $X_{\a}$ with $|\a| = \ell$, there are $2^{\ell}$ possible $\beta$'s to pick from.
\end{subsection}

\vskip2em

\section{Capacity and Removability for Lipschitz Harmonic Functions}\label{lip-section}
The only differential operator we will consider in this section is the sub-Laplacian. Let $\mathcal{X} = \{X_1, \cdots , X_{m_1}\}$ be the Jacobian basis for $\vf_1$ (see Section \ref{section-2} for details). Recall that the \emph{sub-Laplacian} is then, \begin{equation}\label{sub-laplacian}
    \L := \sum_{i=1}^{m_1} X_i^2 = \sum_{i=1}^{m_1}X_i(X_i).
\end{equation}
Then $\L$ is left-invariant, homogeneous of degree $2$, and self-adjoint. By H\"{o}rmander's theorem \cite[Theorem 1 in the preface]{BLU}, $\L$ is hypoelliptic. Hence, there exists a kernel of type $2$, denoted $\cg$, which is a fundamental solution to $\L$. For a function $f$, define the \textit{horizontal gradient} by, 
\begin{equation}\label{horiz-grad}
\nabla_{\G}f := (X_1 f, \cdots, X_{m_1}f)    
\end{equation}
whenever the right hand side makes sense.

\begin{defn}[Lipschitz Functions]
For an open set $\Omega \subset \G$ let $f : \Omega \rightarrow \R$ be a function. We say that $f \in \text{Lip}(\Omega)$ if there exists a constant $C>0$ such that, \begin{align*}
    |f(x) - f(y)| \leq C d(x,y) \text{ for all }x,y\in \Omega.
\end{align*}
Denote by $\text{Lip}(f)$ the smallest such $C$, and set $\text{Lip} = \text{Lip}(\G)$.
\end{defn}

\begin{defn}[Local Lipschitz Functions]
Let $f: \G \rightarrow \R$ be a function. We say that $f \in \text{Lip}_{\text{loc}}$ if for every compact set $E$, there exists a constant $C = C(E) >0$ such that $$|f(x)- f(y)| \leq Cd(x,y) \quad \text{ for all }x,y\in E.$$   
\end{defn}

\begin{defn}[Lipschitz Capacity]
The capacity of a bounded $K \subset \G$ with respect to $\text{Lip}$ and the sub-Laplacian $\L$ (as given in (\ref{sub-laplacian})) is defined as, 
\begin{align*}
 \k(K) := \sup \{ |\langle \L f, 1 \rangle |: f \in \text{Lip}_{\text{loc}}, \| \nabla_{\G}f\|_{L^{\infty}} \leq 1, \supp(\L f) \subset K\}. 
\end{align*}    
\end{defn}

\begin{rem}
The definition of Lipschitz capacity was originally given by Paramonov \cite[Definition 2.1]{P} in Euclidean space with the extra condition $\nabla f(\infty) = 0$, where $\nabla$ is the Euclidean gradient. This extra condition allows one to rewrite distributions in a convenient way. More specifically, one can get the distributional equality of, $$f = \Phi \ast \Delta f +c$$ where $\Delta$ is the Laplacian, $\Phi$ is the fundamental solution to $\Delta$, and $c$ is a constant. This relationship will not be needed for our purposes, so we omit the condition $\nabla f(\infty) = 0$.
\end{rem}

Corollary \ref{bounded-deriv-is-lip} says that if a function has a bounded horizontal gradient, then it is Lipschitz. The following lemma will help us in showing this and it is essentially one half of \cite[Lemma 3.1]{LDPS}.
\begin{lem}\label{local-implies-global}
 Let $f : \G \rightarrow \mathbb{R}$ be a locally Lipschitz function with $\| \nabla_{\G} f\| \leq C$ for some finite constant $C$. Then, $f$ is Lipschitz and \begin{align*}
     Lip(f) \leq \| \nabla_{\G} f\|_{L^{\infty}} \leq C.
 \end{align*}
\end{lem}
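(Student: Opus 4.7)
The plan is to reduce to the smooth case by left-convolution with mollifiers, and then argue along $\mathcal{X}$-subunit paths: the subunit condition \eqref{subunit} is precisely the tool that converts horizontal gradient bounds into CC-path-length bounds, which in turn translate to CC-distance bounds after taking the infimum in \eqref{cc-distance}.

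First, I would mollify. Let $\phi_\varepsilon$ be a smooth non-negative kernel with unit mass and support shrinking to the identity, and set $f_\varepsilon := \phi_\varepsilon * f$ using the convolution \eqref{convolution}. Because the $X_j$ are left-invariant, a direct computation from \eqref{defn-of-left-inv-vf} shows that the derivative passes through left convolution onto the $f$-factor:
\[
X_j f_\varepsilon(x) \;=\; \int \phi_\varepsilon(y)\,(X_j f)(y^{-1}\cdot x)\,dy \;=\; (\phi_\varepsilon * X_j f)(x).
\]
Since $f \in \text{Lip}_{\text{loc}}$, the horizontal derivative $X_j f$ exists a.e.\ with $|X_j f| \le C$, so $\|X_j f_\varepsilon\|_{L^\infty} \le C$ uniformly in $\varepsilon$, while $f_\varepsilon \in C^{\infty}$ and $f_\varepsilon \to f$ locally uniformly (by continuity of $f$).

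Second, for the smooth function $f_\varepsilon$, fix $x, y \in \G$ and $\eta > 0$, and by \eqref{cc-distance}--\eqref{cc-metric} pick a piecewise smooth $\mathcal{X}$-subunit path $\gamma : [0, T] \to \G$ with $\gamma(0) = x$, $\gamma(T) = y$, and $T \le d(x,y) + \eta$. Using \eqref{XI-notation} to write $X_j f_\varepsilon(p) = \langle \nabla f_\varepsilon(p), X_j I(p)\rangle$, the Euclidean chain rule gives
\[
f_\varepsilon(y) - f_\varepsilon(x) \;=\; \int_0^T \langle \nabla f_\varepsilon(\gamma(t)), \dot\gamma(t)\rangle\,dt.
\]
Applying the subunit inequality \eqref{subunit} with $\xi := \nabla f_\varepsilon(\gamma(t))$ then yields
\[
\langle \dot\gamma(t), \nabla f_\varepsilon(\gamma(t))\rangle^2 \;\le\; \sum_{j=1}^{m_1}\langle X_j I(\gamma(t)), \nabla f_\varepsilon(\gamma(t))\rangle^2 \;=\; |\nabla_{\G} f_\varepsilon(\gamma(t))|^2 \;\le\; C^2,
\]
so that $|f_\varepsilon(y) - f_\varepsilon(x)| \le CT \le C(d(x,y)+\eta)$. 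Sending $\eta \to 0$ and then $\varepsilon \to 0$ via locally uniform convergence gives $|f(y) - f(x)| \le C\,d(x,y)$, and hence $\mathrm{Lip}(f) \le \|\nabla_\G f\|_{L^\infty} \le C$.

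The main obstacle I anticipate is the mollification step: verifying that convolving with a smooth kernel really does preserve the $L^\infty$ bound on the horizontal gradient. The non-commutativity of $\G$ forces one to be careful about which factor of the convolution the derivative acts on, and the choice to convolve \emph{on the left} (rather than on the right) is essential so that the left-invariant field $X_j$ commutes into the $f$-argument rather than the mollifier. Once this is handled, the rest of the argument is a faithful adaptation of the classical Euclidean subunit-path argument to the CC-geometry.
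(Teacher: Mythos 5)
Your proposal is correct, and its engine is the same as the paper's: parametrize by an $\mathcal{X}$-subunit path and apply the subunit inequality \eqref{subunit} with $\xi = \nabla f(\gamma(t))$ to convert the horizontal gradient bound into a bound on $\frac{d}{dt}f(\gamma(t))$, hence on $|f(x)-f(y)|$ in terms of the path length. Where you diverge is in how the low regularity of $f$ is handled. The paper works with $f$ directly: it invokes the existence of a length-minimizing subunit path ($T = d(x,y)$, via \cite[Theorem 5.15.5]{BLU}), shows $f\circ\gamma$ is Lipschitz on $[0,T]$ so that the fundamental theorem of calculus applies, and then runs the subunit estimate. You instead mollify by left convolution, run the argument for the smooth $f_\varepsilon$ along an $\eta$-almost-minimizing path, and pass to the limit. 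Your route buys two things: you never need the infimum in \eqref{cc-distance} to be attained, and you sidestep the delicate point of applying the chain rule to $f\circ\gamma$ when $f$ is only locally Lipschitz (a.e.\ differentiability of $f$ in $\G$ does not immediately give a.e.\ differentiability along a fixed curve). The price is the mollification lemma itself: the identity $X_j f_\varepsilon = \phi_\varepsilon \ast X_j f$ requires Pansu differentiability of $f$ (so that $X_j f$ exists a.e.\ and is bounded by $C$), a measure-preservation argument for $y \mapsto y^{-1}\cdot x$ to see that the bad set is still null after the change of variables, and dominated convergence using the local Lipschitz constant to differentiate under the integral. You correctly identify this as the main obstacle and correctly note that convolving on the left is what makes the left-invariant $X_j$ land on the $f$-factor; with those details written out, the argument is complete.
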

\begin{proof}
Fix $x$ and $y$. Recall that $d(x,y)$ is defined as an infimum over subunit paths, $\gamma$, joining $x$ and $y$. By \cite[Theorem 5.15.5]{BLU}, this infimum is actually a minimum. So find $\gamma :[0,T] \rightarrow \G$ for which $T= d(x,y)$ and $\gamma(0)=x$, $\gamma(T)=y$. 
Set $\text{Im}(\gamma) = \{ \gamma(t): t \in [0,T]\} \subset \G$. Since $\gamma$ is continuous, $\text{Im}(\gamma)$ is compact. As $f$ is locally Lipschitz, we have $M := \text{Lip}(f \restrict \text{Im}(\gamma)) < \infty$.
Now we show $f \circ \gamma$ is Lipschitz on $[0,T]$. Take $a,b \in [0,L]$. Then, \begin{align*}
|f \circ \gamma(a) - f\circ \gamma(b)| \leq Md(\gamma(a),\gamma(b)) \leq M \text{Lip}(\gamma)|a-b|
\end{align*}
which shows that $f \circ \gamma$ is Lipschitz. Hence, \begin{align*}
    |f(x)-f(y)| = |f\circ \gamma(0) - f\circ \gamma(T)| &= \left|\int_0^T (f \circ \gamma)'(s) \,ds  \right|\\ &= \left|\int_0^T \langle f(\gamma(s)), \ \dot{\gamma}(s)\rangle\, ds  \right| \\&\overset{(\ref{subunit})}{\leq} \int_0^T \left( \sum_{j=1}^{m_1} (\langle X_jI(\gamma(s)), (\nabla f)(\gamma(s))\rangle)^2  \right)^{1/2}\,ds \\ &= \int_0^T  \left(  \sum_{j=1}^{m_1} [(X_j f)(\gamma(s))]^{2} \right)^{1/2} \\ &\leq \| \nabla_{\G}f\|_{L^{\infty}} T \\ &= \| \nabla_{\G}f\|_{L^{\infty}} d(x,y)
    \end{align*}
which finishes the proof.
 %\\ &\leq T \sup\{ |(f\circ \gamma)'(t)|: t\in [0,L]\} \\ &\leq T  \text{Lip}(\gamma)\| \nabla_{\G}f\|_{L^{\infty}} \\ &\leq TC = C d(x,y)
\end{proof}

\vskip2em

\begin{cor}\label{bounded-deriv-is-lip}
Let $f : \G \rightarrow \mathbb{R}$ be a continuous function satisfying $\| \nabla_{\G} f\|_{L^{\infty}} \leq C$ for some finite constant $C$. Then $f$ is Lipschitz.  
\end{cor}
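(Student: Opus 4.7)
The plan is to reduce to Lemma \ref{local-implies-global} via a standard smoothing argument. Since $f$ is only assumed continuous, the hypothesis $\|\nabla_{\G} f\|_{L^\infty} \leq C$ must be interpreted in the distributional sense, i.e.\ each $X_j f \in \D'(\G)$ is represented by an $L^\infty$ function with $\|X_j f\|_\infty \leq C$. Let $\rho \in C_0^\infty(\G)$ be a nonnegative mollifier with $\int \rho = 1$, and set $\rho_\epsilon(x) = \epsilon^{-Q}\rho(\delta_{1/\epsilon}(x))$. Define the smoothed approximation $f_\epsilon := \rho_\epsilon * f$, which belongs to $C^\infty(\G)$ by the standard properties of group convolution.

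First I would verify the identity $X_j f_\epsilon = \rho_\epsilon * (X_j f)$. This is where the left-invariance of $X_j$ and the particular convention (\ref{convolution}) for convolution enter: differentiating $(\rho_\epsilon*f)(x) = \int \rho_\epsilon(y) f(y^{-1}\cdot x)\,dy$ with respect to $x$ in the direction of $X_j$ drops the derivative onto the second factor, giving $\rho_\epsilon * (X_j f)$ interpreted distributionally. Consequently $\|X_j f_\epsilon\|_{L^\infty} \leq \|\rho_\epsilon\|_{L^1}\|X_j f\|_{L^\infty} \leq C$, so $\|\nabla_{\G} f_\epsilon\|_{L^\infty} \leq C$ for every $\epsilon > 0$.

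Because $f_\epsilon$ is smooth, it is locally Lipschitz with respect to the CC distance (its horizontal derivatives are continuous, hence bounded on any compact set, and a subunit path argument as in the proof of Lemma \ref{local-implies-global} then bounds its differences). Thus Lemma \ref{local-implies-global} applies and delivers $\text{Lip}(f_\epsilon) \leq \|\nabla_{\G} f_\epsilon\|_{L^\infty} \leq C$, uniformly in $\epsilon$. Since $f$ is continuous, $f_\epsilon \to f$ locally uniformly as $\epsilon \to 0^+$, so for any $x,y \in \G$ the inequality $|f_\epsilon(x)-f_\epsilon(y)| \leq C\,d(x,y)$ passes to the limit to yield $|f(x)-f(y)| \leq C\,d(x,y)$.

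The main obstacle is the commutation identity $X_j(\rho_\epsilon * f) = \rho_\epsilon * (X_j f)$ when $f$ is only continuous; one must be careful to convolve from the \emph{left} by $\rho_\epsilon$ so that, under the convention (\ref{convolution}), the left-invariant derivative in $x$ falls on $f$ rather than on $\rho_\epsilon$. Once this is in hand the approximation argument is routine and uses nothing beyond Lemma \ref{local-implies-global}.
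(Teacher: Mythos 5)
Your proof is correct, but it takes a genuinely different route from the paper. The paper disposes of the corollary in two lines: it invokes an external result of Garofalo--Nhieu (Theorem 1.4 of \cite{GN}) to conclude that a continuous function with bounded distributional horizontal gradient is locally Lipschitz, and then applies Lemma \ref{local-implies-global}. You instead mollify: with the paper's convention (\ref{convolution}), placing $f$ as the second factor in $\rho_\epsilon * f$ is indeed the correct choice, since for a left-invariant $X_j$ one has $X_j(\phi * u) = \phi * (X_j u)$ (the derivative, acting on the free variable $x$ sitting on the right of $y^{-1}\cdot x$, always lands on the second factor), and the distributional justification goes through because $X_j^t = -X_j$ and $(X_j\varphi)(y\cdot z) = X_j(\varphi\circ\tau_y)(z)$ by left-invariance. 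From there the uniform bound $\|\nabla_{\G} f_\epsilon\|_{L^\infty}\leq C$, Lemma \ref{local-implies-global} applied to each smooth $f_\epsilon$ (smoothness giving local Lipschitzness via the stratified mean value theorem, as the paper itself uses in Lemma \ref{first_func_is_lip}), and locally uniform convergence $f_\epsilon \to f$ finish the argument. What your approach buys is self-containedness --- it replaces the citation of \cite{GN} with elementary convolution facts --- at the cost of having to commit explicitly to the distributional reading of the hypothesis $\|\nabla_{\G}f\|_{L^\infty}\leq C$; the paper's route is shorter but outsources exactly the step you prove by hand. Both yield the quantitative conclusion $\mathrm{Lip}(f)\leq C$.
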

\begin{proof}
By [Theorem 1.4, \cite{GN}], $f$ is locally Lipschitz. Then using Lemma \ref{local-implies-global}, we get $f$ is Lipschitz. 
\end{proof}

Given Theorems \ref{intro-cap} and \ref{intro-hold}, one might be tempted to prove $\h^{Q-1}(K) \sim \k(K)$ for a compact $K$ then show removability as a consequence. This explicitly fails though. Indeed, by \cite[Theorem 1.2]{CMT}, there exists a compact set $K \subset \G$ for which $\mathcal{H}_{\infty}^{Q-1}(K) > 0$ but $\k(K) = 0$. Instead, we can prove removability through a more direct approach. We will show how the proof of the removability characterization goes, then prove the necessary lemmas afterwards.

\begin{thm}[Removability Characterization for Lip]\label{rem-for-lip}
Let $K \subset \G$ be bounded. Then $K$ is removable for $\L$-solutions in Lip if and only if $\k(K) =0$.    
\end{thm}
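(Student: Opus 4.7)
The plan is to prove both implications. The forward direction is immediate: if $K$ is removable and $f$ is any competitor in the supremum defining $\k(K)$, then $\|\nabla_{\G}f\|_{L^{\infty}}\le 1$ with $f\in\text{Lip}_{\text{loc}}$ forces $f\in\text{Lip}(\G)$ by Corollary \ref{bounded-deriv-is-lip}. Since $\supp \L f\subset K$ gives $\L f=0$ on $\G\setminus K$, removability with $\Omega=\G$ yields $\L f\equiv 0$, hence $\langle \L f,1\rangle=0$ and $\k(K)=0$.

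For the converse I argue by contrapositive. Suppose $K$ is not removable, so there exist a domain $\Omega$ and $F\in\text{Lip}(\Omega)$ with $\L F=0$ on $\Omega\setminus K$ but $\L F\not\equiv 0$ as a distribution on $\Omega$. Pick $\varphi\in C_0^{\infty}(\Omega)$ with $c:=\langle \L F,\varphi\rangle\ne 0$, let $T:=\varphi\,\L F$ (a compactly supported distribution contained in $K$), and set
\begin{align*}
f:=T\ast\cg.
\end{align*}
By left-invariance of $\L$, $\L f=T\ast \L\cg=T\ast\delta=T$, so $\supp \L f\subset K$ and $\langle \L f,1\rangle=c\ne 0$. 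The remaining step is the regularity claim: $f\in\text{Lip}_{\text{loc}}$ with $\|\nabla_{\G}f\|_{L^{\infty}}\le M<\infty$; this makes $f/M$ admissible for $\k$ and forces $\k(K)\ge|c|/M>0$.

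To prove the regularity claim I apply the distributional product rule
\begin{align*}
\varphi\,\L F=\L(\varphi F)-2\,\nabla_{\G}\varphi\cdot\nabla_{\G}F-F\,\L\varphi,
\end{align*}
together with the identity $[\L u]\ast\cg=u$ for $u\in L^{\infty}$ of compact support. This identity is obtained by transferring left-invariant derivatives across the convolution via $(X_iv)\ast g=v\ast\wt{X}_ig$ and then verifying $\wt{\L}\cg=\delta$ using the symmetry $\cg(p^{-1})=\cg(p)$. Applied to $u=\varphi F$ it yields
\begin{align*}
f=\varphi F-2\,(\nabla_{\G}\varphi\cdot\nabla_{\G}F)\ast\cg-(F\,\L\varphi)\ast\cg.
\end{align*}
The first summand is Lipschitz with compact support. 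The remaining two are convolutions of bounded compactly supported functions against $\cg$; since $\cg$ and $\nabla_{\G}\cg$ are homogeneous of degrees $2-Q$ and $1-Q$ respectively (Remark \ref{deriv-of-fund-soln} and Proposition \ref{hom-func-bound}), both kernels are locally integrable on $\G$, so these convolutions are continuous, and their horizontal gradients, computed by $X_j(g\ast\cg)=g\ast X_j\cg$ from left-invariance, are bounded. Thus $\|\nabla_{\G}f\|_{L^{\infty}}<\infty$, and Corollary \ref{bounded-deriv-is-lip} upgrades this to $f\in\text{Lip}(\G)$.

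The main obstacle is precisely the identity $[\L u]\ast\cg=u$ in the non-commutative setting. In Euclidean space the analogous step in the Mattila--Paramonov proof exploits commutativity of partial derivatives; in $\G$ one must swap $\L$ for the right-invariant $\wt{\L}$ when moving it across $\ast$, and only the symmetry of $\cg$ under inversion makes $\wt{\L}\cg=\delta$ work. Equally important is arranging the expansion of $\varphi\,\L F$ so that both horizontal derivatives fall on the cutoff $\varphi$ in the leading term: a naive rearrangement would leave a kernel of the form $X_j\wt{X}_i\cg$, of borderline homogeneity $-Q$, convolved against an $L^{\infty}$ datum, and such convolutions are not bounded on $L^{\infty}$ and would destroy the required estimate on $\nabla_{\G}f$.
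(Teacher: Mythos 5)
Your proof is correct, and its overall skeleton (contrapositive, form $f_\varphi=(\varphi\,\L F)\ast\cg$, show it is an admissible competitor) matches the paper's, but you handle the one genuinely hard step by a different and noticeably simpler decomposition. The paper keeps the cross term in divergence form, writing $\varphi\,\L f=\L(f\varphi)-2\sum_i X_i(fX_i\varphi)+f\L\varphi$, so after convolving with $\cg$ and differentiating it must contend with the kernel $X_i\wt{X}_j\wt{\cg}$ of critical homogeneity $-Q$; this is exactly what forces its integration-by-parts Lemma \ref{real-ibp} (with the Lipschitz increment $g(y)-g(x)$ restoring integrability) and the delicate Lemma \ref{second_func_in_lip}. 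You instead expand the cross term pointwise as $2\,\nabla_{\G}\varphi\cdot\nabla_{\G}F$, an $L^\infty$ function of compact support, so every convolution you form pairs a bounded compactly supported datum with $\cg$ or $\nabla_{\G}\cg$ (homogeneities $2-Q$ and $1-Q$, both locally integrable), and the whole of Lemmas \ref{real-ibp}--\ref{second_func_in_lip} becomes unnecessary. The price is that your distributional product rule, and the claim that $(\nabla_{\G}\varphi\cdot\nabla_{\G}F)\ast\cg$ is a convolution of a bounded function against $\cg$, require knowing that the distributional horizontal derivatives of a locally Lipschitz function are represented by its a.e.\ Pansu derivatives in $L^\infty_{\mathrm{loc}}$. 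This is a true and standard fact (Garofalo--Nhieu, Franchi--Serapioni--Serra Cassano), and the paper itself leans on the same identification when it splits $X_i(f\psi)\ast X\cg$ into the integrals $I+II$ inside Lemma \ref{second_func_in_lip} and when it bounds $\nabla_{\G}(f\varphi)$ via Pansu's theorem in Lemma \ref{first_func_is_lip}, so you are on equal footing; but you should state and cite it explicitly, since it is the only nontrivial input your shortcut consumes. Two further minor points: your diagnosis of the "naive rearrangement" correctly identifies the $-Q$-homogeneous kernel as the obstruction the paper's IBP lemma is built to circumvent, and in the forward direction the appeal to Corollary \ref{bounded-deriv-is-lip} is not needed (competitors are already in $\mathrm{Lip}_{\mathrm{loc}}$, which is all the definition of removability with $\Omega=\G$ requires), though it is harmless.
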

\begin{proof}
Assume $K$ is removable and take $f \in \text{Lip}_{\text{loc}}$ which satisfies $\supp (\L f)\subset K$ distributionally. Applying the definition of removability to the domain $\Omega = \G$, we automatically get $\L f \equiv 0$ on $\G$. This gives $\k(K) = 0$.

For the other direction, we will use the contrapositive. Assume $K$ is not removable for $\L$-solutions in $\text{Lip}$. Then there exists a domain $D$ and a locally Lipschitz function $f: D \rightarrow \R$ satisfying $\L f = 0$ in $D \backslash K$ but $\L f \neq 0$ in $D$. So $f$ is not an $\L$-solution in the distributional sense which means there exists $\varphi \in C_0^{\infty}(D)$ with $|\langle \L f, \varphi \rangle| = |\langle \varphi \L f, 1 \rangle| > 0$. Notice $\supp( \varphi \L f) \subset K$. Set $f_{\varphi} = (\varphi \L f) \ast \cg$. Left-invariance of $\L$ gives, \begin{align*}
    \L(f_{\varphi}) = \varphi \L f \ast \L \cg = \varphi \L f
\end{align*} so that $\supp ( \L f_{\varphi}) \subset K$. By Lemmas \ref{first_func_is_lip}, \ref{second_func_in_lip}, and \ref{third_func_in_lip} we get $f_{\varphi}\in \text{Lip}$ and $\| \nabla_{\G} f_{\varphi}\|_{L^{\infty}}=: A < \infty$. Hence, 
$$\k(K) \geq |\langle \L (f_{\varphi}/A), 1 \rangle| = |\langle \varphi \L f, 1 \rangle|/A = |\langle \L f , \varphi \rangle| /A > 0$$  which completes the proof.
%Then there exists a compact $F \supset K$, a domain $D$, and a locally Lipschitz function $f : D \rightarrow \mathbb{R}$ satisfying $\L f = 0 $ in $D \backslash F$ but $\L f \neq 0$ in $D$.
\end{proof}

Now we show the necessary lemmas which give $f_{\varphi} = \varphi \L \ast \cg$ is Lipschitz with a bounded horizontal gradient. We first start by getting a distributional formula for objects of the form $\varphi \L f \ast \cg$. Let $f$ be a locally Lipschitz function and take $\psi, \varphi \in C_0^{\infty}$. Since $\L$ is self-adjoint, $\langle \varphi \L f, \psi \rangle = \langle f, \L(\psi  \varphi)\rangle$. Moreover, by \cite[Exercise 6 of Chapter 1]{BLU}, \begin{align*}
\L (\psi \varphi) = \psi \L \varphi + 2\langle \nabla_{\G} \psi, \nabla_{\G} \varphi\rangle + \varphi \L \psi.    
\end{align*}
Since the horizontal gradient of a real-valued function generates a vector in $\R^{m_1}$, the inner product above is the dot product on $\R^{m_1}$.
These two equalities now give,
\begin{align*}
    \langle \varphi \L f, \psi \rangle &= \langle f , \psi \L \varphi \rangle + \langle f, 2\sum_{i=1}^{m_1} X_i \psi X_i \varphi\rangle + \langle f, \psi \L \varphi\rangle \\ &= \langle \L(f \varphi), \psi \rangle - \langle \sum_{i=1}^{m_1} 2 X_i(f X_i \varphi), \psi\rangle+ \langle f \L \varphi, \psi\rangle
\end{align*}
So distributionally, \begin{align*}
 \psi \L f  = \L(f \psi) - 2\sum_{i=1}^{m_1} X_i(fX_i \psi) + f\L \psi.   
\end{align*}
Therefore, by \cite[Corollary 2.8]{Fo},
\begin{equation}\label{dist-form-lip}
\varphi \L f \ast \cg = f \varphi -2 \sum_{i=1}^{m_1} X_i(f X_i \varphi )\ast \cg + f \L \varphi \ast \cg. 
\end{equation}
Hence, it suffices to show that each function on the right hand side of (\ref{dist-form-lip}) is Lipschitz and has a bounded horizontal gradient. The easiest cases are showing this for $f \varphi$ and $f \L \varphi \ast \cg$, so we start with these.
\begin{lem}\label{first_func_is_lip}
Take $f \in \text{Lip}_{\text{loc}}$ and $\varphi \in C_0^{\infty}(B(a,\delta))$. Then $f\varphi \in \text{Lip}$ and $\| \nabla_{\G} (f\varphi)\|_{L^{\infty}} < \infty$.    
\end{lem}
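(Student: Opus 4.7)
The plan is to reduce to Corollary \ref{bounded-deriv-is-lip} by showing that $f\varphi$ is continuous on $\G$ with a bounded horizontal gradient. First I would observe that $f\varphi$ vanishes outside $\supp \varphi \subseteq \overline{B(a,\delta)}$, so it suffices to work on this compact ball; continuity of $f\varphi$ on all of $\G$ is then immediate since $f$ is locally Lipschitz (hence continuous) and $\varphi$ is smooth (so $f\varphi$ extends continuously by zero across $\partial \overline{B(a,\delta)}$).

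Next, I would use the local Lipschitz hypothesis on $f$ restricted to the compact set $\overline{B(a,\delta)}$: there is a constant $L<\infty$ with $|f(x)-f(y)| \le L\,d(x,y)$ throughout this ball, so in particular $f$ is bounded there, and a Pansu-Rademacher style argument gives $|X_i f| \le L$ almost everywhere on $\overline{B(a,\delta)}$. Combining this with the obvious bounds on $\varphi$ and $X_i\varphi$ (both in $C_0^\infty$), the product rule (valid a.e.)
\begin{equation*}
X_i(f\varphi) = \varphi\, X_i f + f\, X_i\varphi
\end{equation*}
produces the a.e. bound
\begin{equation*}
\|\nabla_{\G}(f\varphi)\|_{L^\infty} \le \|\varphi\|_\infty L + \|f\|_{L^\infty(\overline{B(a,\delta)})}\,\|\nabla_{\G}\varphi\|_\infty < \infty.
\end{equation*}
Since $f\varphi$ is continuous with bounded horizontal gradient, Corollary \ref{bounded-deriv-is-lip} then yields $f\varphi \in \text{Lip}$.

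The only mildly nontrivial point is invoking an appropriate a.e.\ horizontal differentiability theorem for locally Lipschitz functions on a Carnot group so that $X_i f$ is well defined a.e.\ and essentially bounded by the local Lipschitz constant; this is standard in the sub-Riemannian setting and presumably follows from the same type of reference already used for Corollary \ref{bounded-deriv-is-lip}. No substantial obstacle is expected.
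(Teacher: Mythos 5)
Your argument is correct, but it runs in the opposite direction from the paper's. The paper first proves $f\varphi\in\mathrm{Lip}$ directly and elementarily: it shows $\varphi$ is Lipschitz via the stratified mean value theorem and then handles the three cases ($x,y$ both outside $B(a,\delta)$, both inside, one of each) with the triangle inequality, using only the local Lipschitz constant of $f$ on the ball and the vanishing of $\varphi$ outside its support; only afterwards does it obtain $\|\nabla_{\G}(f\varphi)\|_{L^\infty}<\infty$ by applying Pansu's differentiability theorem and \cite[Lemma 3.1]{LDPS} to the already globally Lipschitz product $g=f\varphi$. You instead bound the horizontal gradient first, applying Pansu--Rademacher to the merely locally Lipschitz $f$ and an a.e.\ product rule, and then recover global Lipschitzness from Corollary \ref{bounded-deriv-is-lip}. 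Both routes work. The paper's buys a Lipschitz bound with an explicit constant and no appeal to differentiability of $f$ at all in that step; yours is shorter but leans on two standard-but-nontrivial identifications: that the pointwise horizontal derivatives of the locally Lipschitz $f$ exist a.e.\ and are bounded by the local Lipschitz constant, and that the resulting a.e.\ pointwise bound on $\nabla_{\G}(f\varphi)$ is the same notion of $\|\nabla_{\G}(f\varphi)\|_{L^\infty}\le C$ required by Corollary \ref{bounded-deriv-is-lip} (whose proof goes through the distributional gradient via Garofalo--Nhieu). Since $f\varphi$ is manifestly locally Lipschitz as a product of locally Lipschitz, locally bounded functions, you could bypass that last identification entirely and invoke Lemma \ref{local-implies-global} directly instead of the Corollary, which would tighten your argument.
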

\begin{proof}
The stratified mean value theorem, i.e. \cite[Theorem 20.3.1]{BLU}, says that for $\psi \in C^1$ there exists constants $c$ and $b$ which only depend on $\G$ and the CC-metric such that \begin{align*}
    |\psi (p \cdot h) - \psi(p)| \leq c d(h) \sup_{z : d(z)\leq b d(h)} |(X_1\psi(p \cdot z), \cdots, X_{m_1}\psi(p \cdot z))|.
\end{align*}
Therefore, $\varphi$ is Lipschitz. Now we show $f \varphi$ is Lipschitz. Take $x,y \in \G$. If both $x \notin B(a,\delta)$ and $y\notin B(a,\delta)$ then clearly $|f(x) \varphi(x) - f(y)\varphi(y)|= 0 $ and we're done. Now assume $x,y \in B(a,\delta)$. Set $M = \text{Lip}(f \restrict B(a,\delta))$. Then, \begin{align*}
    |f(x)\varphi(x) - f(y) \varphi(y)| &\leq |f(x)\varphi(x) - f(y) \varphi(x)| + |f(y)\varphi(x) - f(y) \varphi(y)| \\ &\leq \| \varphi\|_{\infty} Md(x,y) + \| f\|_{L^{\infty}(B(a,\delta))} \text{Lip}(\varphi) d(x,y)
\end{align*}
which finishes the case when $x,y \in B(a,\delta)$. For the final case, assume without loss of generality that $x\in B(a,\delta)$ and $y \notin B(a,\delta)$. Since $\varphi \in \text{Lip}$ and $\varphi(y) = 0$, \begin{align*}
    |f(x) \varphi(x) - f(y)\varphi(y)|  &\leq \| f\|_{L^{\infty}(B(a,\delta))} |\varphi(x)| \\ &= \|f \|_{L^{\infty}(B(a,\delta))}|\varphi(x) - \varphi(y)| \\ &\leq \|f \|_{L^{\infty}(B(a,\delta))}\text{Lip}(\varphi) d(x,y)
\end{align*}
so therefore, in all cases we have $f\varphi \in \text{Lip}$.

Set $g = f \varphi \in \text{Lip}$. By Pansu's differentiability theorem \cite{Pan}, $X g(x)$ exists for almost every $x \in \G$, where $X$ is any element in $\mathcal{X}$. Furthermore, \cite[Lemma 3.1]{LDPS} says that for a Lipschitz $f: \G \rightarrow \R$, \begin{align*}
    \text{Lip}(f) \geq \sup\{ |X f(x)|: X \in \mathcal{X}, \, x \in \G, \, Xf(x) \text{ exists}\}.
\end{align*} 
Hence, $\| \nabla_{\G} g\|_{L^{\infty}} < \infty$.
\end{proof}

We now move onto $f \L \varphi \ast \cg$ by girst giving a decay estimate.

\begin{prop}\label{decay-of-one-deriv}
 Set $\xi(x) = f \L \varphi \ast X\cg(x)$ for $f \in \text{Lip}_{\text{loc}}$ and $\varphi \in C_0^{\infty}(B(a,\delta))$, and where $X$ is any element in $\mathcal{X}$. Then there exists $R < \infty$ such that \begin{align*}
     |\xi(x)|\leq 1 \quad \text{ for all }x\in B(a,R)^c.
 \end{align*}  
\end{prop}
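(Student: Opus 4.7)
The plan is to exploit two facts: the kernel $X\cg$ decays like $\|p\|^{1-Q}$ away from the origin, and the measure $f\L\varphi$ against which we integrate is compactly supported inside $B(a,\delta)$. Together these yield pointwise decay of $\xi(x)$ as $x$ recedes from $a$, so it will suffice to pick $R$ large enough.

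First I would invoke Remark \ref{deriv-of-fund-soln}: since $\cg$ is a kernel of type $2$ and $X \in \mathcal{X}$ is a left-invariant differential operator of degree $1$, the function $X\cg$ lies in $C^{\infty}(\G \setminus \{0\})$ and is homogeneous of degree $2 - Q - 1 = 1 - Q$. Proposition \ref{hom-func-bound} then supplies a constant $C > 0$ (depending only on $X\cg$ and $\G$) such that
\begin{align*}
    |X\cg(p)| \leq C\|p\|^{1-Q} \quad \text{for all } p \in \G \setminus \{0\}.
\end{align*}
Note that $1 - Q < 0$: Folland's theorem (Theorem \ref{folland-fund-soln}) applied to the sub-Laplacian forces $Q > 2$, so this exponent really is negative and will yield decay.

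Next I would write out the convolution explicitly. Since $\L\varphi \in C_0^{\infty}(B(a,\delta))$,
\begin{align*}
    \xi(x) = \int_{B(a,\delta)} f(y)\,\L\varphi(y)\,X\cg(y^{-1}\cdot x)\,dy.
\end{align*}
For any $x \in B(a,R)^c$ with $R > \delta$ and any $y \in B(a,\delta)$, the triangle inequality for the CC-distance gives
\begin{align*}
    \|y^{-1}\cdot x\| = d(y,x) \geq d(a,x) - d(a,y) > R - \delta,
\end{align*}
so the pointwise estimate above yields $|X\cg(y^{-1}\cdot x)| \leq C(R-\delta)^{1-Q}$ (using $1-Q<0$ to reverse the inequality).

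Combining these and using that $f \in \text{Lip}_{\text{loc}}$ is bounded on the compact closure $\overline{B(a,\delta)}$, we obtain
\begin{align*}
    |\xi(x)| \leq C(R-\delta)^{1-Q} \|f\|_{L^{\infty}(\overline{B(a,\delta)})} \|\L\varphi\|_{L^1(B(a,\delta))},
\end{align*}
and the right hand side tends to $0$ as $R \to \infty$. Choosing $R$ large enough produces the desired bound $|\xi(x)| \leq 1$. There is no real obstacle here; the argument is a routine decay estimate whose only delicate points are verifying the homogeneity degree of $X\cg$ via Remark \ref{deriv-of-fund-soln} and justifying finiteness of the integral via local boundedness of $f$.
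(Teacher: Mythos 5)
Your argument is correct and is essentially the paper's own proof: both bound $|X\cg(p)|\lesssim \|p\|^{1-Q}$ via Remark \ref{deriv-of-fund-soln} and Proposition \ref{hom-func-bound}, use the triangle inequality for the CC-distance to keep $\|y^{-1}\cdot x\|$ bounded below when $x$ is far from $B(a,\delta)$, and conclude from the compact support and local boundedness of $f\L\varphi$ that $\xi(x)\to 0$ as $x$ recedes from $a$. Your write-up even carries the exponent $1-Q$ correctly through the final estimate, where the paper's displayed computation slips to $-Q$.
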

\begin{proof}
It is easy to check that since $\cg$ is homogeneous of degree $2-Q$ on $\G \backslash \{0\}$ that $\cg$ is also homogeneous of degree $2-Q$ on $\G \backslash \{0\}$. So Remark \ref{deriv-of-fund-soln} and Proposition \ref{hom-func-bound} say $|X \cg(p)|\lesssim \| p\|^{1-Q}$ for all $p \in \G \backslash \{0\}$. Set $h = f \L \varphi$ so that $\supp (h) \subset B(a,\delta)$ and $M := \| h\|_{\infty} < \infty$. Then,  \begin{align}
    |\xi(x)| &= \left| \int_{B(a,\delta)}h(y) X\cg(y^{-1}\cdot x)\,dy\right| \nonumber \\ &\leq M \int_{B(a,\delta)}\| y^{-1}\cdot x\|^{-Q}\,dy. \label{step1}
\end{align}
The CC-distance satisfies the triangle inequality and is symmetric, so $\| p^{-1}\cdot q\| \geq \| q\| -\|p\|$ for all $p,q \in \G$. Take any $x \in B(a,2\delta)^c$. Then for $y \in B(a,\delta)$, clearly $$\| y^{-1}\cdot x\| = \| y^{-1}\cdot a \cdot a^{-1}\cdot x\| \geq \|x^{-1}\cdot a\|/2.$$ Resuming from (\ref{step1}) then gives, \begin{align*}
|\xi(x)| \lesssim M \|x^{-1}\cdot a\|^{-Q} \delta^Q
\end{align*}
which completes the proof since $\delta$ and $M$ are fixed.
\end{proof}

\begin{lem}\label{third_func_in_lip}
Take $f \in \text{Lip}_{\text{loc}}$ and $\varphi \in C_0^{\infty}(B(a,\delta))$. Then the function $\Theta$ defined by $\Theta (x) := f\L \varphi \ast \cg(x)$ is continuous and satisfies $\| \nabla_{\G} \Theta \|_{L^{\infty}} < \infty$. In particular, Corollary \ref{bounded-deriv-is-lip} gives $\Theta \in \text{Lip}$.  
\end{lem}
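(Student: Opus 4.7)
The plan is to combine standard kernel bounds on $\cg$ with Corollary \ref{bounded-deriv-is-lip}. By Remark \ref{deriv-of-fund-soln} and Proposition \ref{hom-func-bound}, the fundamental solution satisfies $|\cg(p)|\lesssim\|p\|^{2-Q}$ and $|X_i\cg(p)|\lesssim \|p\|^{1-Q}$ for every $X_i$ in the Jacobian basis, so both $\cg$ and its horizontal derivatives lie in $L^1_{\text{loc}}(\G)$. The function $h:=f\L\varphi$ is continuous, bounded, and compactly supported in $B(a,\delta)$.

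First I would establish continuity of $\Theta=h\ast \cg$: since $h\in L^{\infty}$ has compact support and $\cg\in L^1_{\text{loc}}(\G)$, this is a routine Lebesgue-continuity argument. Next I would use the left-invariance of $X_i$ to obtain the distributional identity $X_i\Theta = h\ast X_i\cg$, and observe that the right-hand side is itself a continuous function by the same Lebesgue-continuity argument applied to $X_i\cg$. It then suffices to bound this convolution uniformly in $x$.

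For the bound I would split at distance $R$ from $a$. By Proposition \ref{decay-of-one-deriv} there exists $R$ such that $|h\ast X_i\cg(x)|\leq 1$ for $x\notin B(a,R)$. For $x\in B(a,R)$, the pointwise estimate on $X_i\cg$ combined with the inclusion $B(a,\delta)\subset B(x,R+\delta)$ gives
\[
|h\ast X_i\cg(x)| \lesssim \|h\|_{\infty}\int_{B(x,R+\delta)} d(y,x)^{1-Q}\,dy \lesssim R+\delta,
\]
where the last inequality uses the layer-cake formula together with the fact that the Haar measure of $B(0,r)$ is a constant multiple of $r^Q$. Thus the distributional horizontal gradient of $\Theta$ is represented by a bounded continuous function, so $\|\nabla_{\G}\Theta\|_{L^\infty}<\infty$, and Corollary \ref{bounded-deriv-is-lip} concludes the proof.

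The only slightly delicate point is the distributional identity $X_i(h\ast\cg)=h\ast X_i\cg$ under the minimal integrability assumption $\cg,X_i\cg\in L^1_{\text{loc}}$; this is standard given the left-invariance of $X_i$ and the integrability of both kernels, and is the place where the specific kernel bounds coming from Folland's theorem really enter. Everything else is routine kernel estimation in the CC-metric.
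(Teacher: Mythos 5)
Your proposal is correct and follows essentially the same route as the paper: continuity via dominated convergence, the identity $X_i\Theta = (f\L\varphi)\ast X_i\cg$ from left-invariance, the decay estimate of Proposition \ref{decay-of-one-deriv} outside $B(a,R)$, and the bound $\lesssim \|f\L\varphi\|_\infty (R+\delta)$ inside $B(a,R)$ from the kernel estimate $|X_i\cg(p)|\lesssim\|p\|^{1-Q}$ (the paper sums over dyadic annuli where you invoke the layer-cake formula, which is the same computation). No gaps.
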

\begin{proof}
%Like the proof of Lemma \ref{second_func_in_lip}, it suffices to show $\Theta \in C^1 \left( B(a,2\delta)\right)$. Take any $X \in \mathcal{X}$. Remark \ref{deriv-of-fund-soln} and Proposition \ref{hom-func-bound} say $|X \cg (p)|\lesssim \| p\|^{1-Q}$ for all $p \in \G \backslash \{0\}$. Let $g = f \L \varphi$. Then for any $x \in B(a,2\delta)$, left-invariance of $X$ yields, \begin{align}
%|X\Theta(x)| &\lesssim \|g\|_{\infty} \int_{B(a,\delta)} \| y^{-1}\cdot x\|^{1-Q}\,dy \nonumber \\ &\sim \sum_{j=0}^{\infty} \int_{ 2^{-j-1}(3\delta) < \| x^{-1}\cdot y\| \leq 2^{-j} (3\delta)} \|x^{-1}\cdot y\|^{1-Q}\,dy \nonumber \\ &\lesssim \sum_{j=0}^{\infty} (2^{-j-1}(3\delta))^{1-Q} \left| B(x, 2^{-j}(3\delta)) \right| \nonumber \\ &\sim \delta \sum_{j=0}^{\infty} 2^{-j} = \delta. \label{dct-bound-2}
%\end{align}
%A change of variables gives, \begin{equation}\label{dct-cont-2}
% X\Theta(x) = \int g(x \cdot p) X\cg(p)\,dp   
%\end{equation}
%and since $g$ is continuous, (\ref{dct-bound-2}) and (\ref{dct-cont-2}) allow the use of the dominated convergence theorem to conclude $\Theta \in C^1\left( \overline{B(a,\delta)} \right)$. 
Proposition \ref{decay-of-one-deriv} gives the existence of $R > 0$ such that \begin{align}
    |X \Theta(x)| \leq 1 \quad \text{ for all }x \in B(a,R)^c. \label{bound_outside_ball}
\end{align} 
Remark \ref{deriv-of-fund-soln} and Proposition \ref{hom-func-bound} say $|X \cg(p)|\lesssim \| p\|^{1-Q}$ for all $p \in \G \backslash \{0\}$
So for any $x \in B(a,R)$, integrating over annuli gives \begin{align*}
    |X \Theta(x)|  &\lesssim \| g\|_{L^{\infty}(B(a,\delta))} \int_{B(a,\delta)} \| y^{-1}\cdot x\|^{1-Q}\,dy  \\ &\leq \|g\|_{L^{\infty}(B(a, \delta))} \sum_{j=0}^{\infty} \int_{ 2^{-j-1}(R+\delta) \leq \| y^{-1} \cdot x\| < 2^{-j}(R+\delta)}\| y^{-1} \cdot x\|^{1-Q}\,dy \\ &\lesssim \| g\|_{L^{\infty}(B(a,\delta))}\sum_{j=0}^{\infty} (2^{-j} (R+\delta))^{1-Q} \left| B(x, 2^{-j}(R+\delta))\right| \\ &\sim \| g\|_{L^{\infty}(B(a,\delta))} (R+\delta)^{1-Q} \sum_{j=0}^{\infty} 2^{j(Q-1)} (R+\delta)^Q 2^{-jQ} \\ &= \| g\|_{L^{\infty}(B(a,\delta))}(R+\delta) \sum_{j=0}^{\infty} 2^{-j} \\ &\sim \| g\|_{L^{\infty}(B(a,\delta))}(R+\delta).
\end{align*}
Along with (\ref{bound_outside_ball}), this shows $\| \nabla_{\G} \Theta \|_{L^{\infty}} \lesssim 1$.

For continuity, simply note $\supp \L \varphi$ is compact, $\cg$ is in $L_{\text{loc}}^1$, and $f \L \varphi$ is a continuous function. So the dominated convergence theorem gives that $\Theta$ is continuous.

\end{proof}

We can now move to the hardest case. For some intuition, we will be applying two derivatives to $\cg$ which will force the resulting function to be homogeneous of degree $-Q$. However, these functions need not be locally integrable (around the origin) and this is what causes issues. The saving grace though comes with our Lipschitz function which will allow us to raise the exponent to $1-Q$, and this is locally integrable.

First, we show an integration by parts formula. 

\begin{lem}\label{real-ibp}
Suppose $g$ is a continuous function satisfying $|\nabla_{\G} g\|_{L^{\infty}} \lesssim 1$ and $\psi \in C_0^{\infty}$. Fix $x \in \G$. Then for a differential operator $Y$ homogeneous of degree $1$ and a horizontal vector field $X$, we have \begin{align*}
 \int (Xg)(y)\psi(y)(Y \cg)(y^{-1}\cdot x)\,dy &= \int (g(y) - g(x)) \psi(y) (X \wt{Y}\wt{\cg})(x^{-1}\cdot y)\,dy \\ &\,\,\,\,\,\,\,\,\,\,\,\,\,\,\,\,\,\,\,\,\,\,\,\,\,\,\,\,\,\,\,\,\,\,- \int (g(y)-g(x)) (X \psi)(y)(Y \cg)(y^{-1}\cdot x)\,dp. 
\end{align*}
\end{lem}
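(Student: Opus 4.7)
The plan is to start from the observation $(Xg)(y) = X_y[g(y) - g(x)]$, which holds almost everywhere in $y$ since $g(x)$ is a constant in $y$ and left-invariant vector fields annihilate constants. This rewriting is what causes the factor $g(y) - g(x)$ to appear on the right-hand side and, crucially, it tames the order of the singularity at $y = x$ once we integrate by parts.

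Next I would truncate the singularity at $x$ by choosing a smooth cutoff $\eta_\epsilon$ equal to $0$ on $B_{\cg}(x,\epsilon)$ and to $1$ outside $B_{\cg}(x,2\epsilon)$; the use of the $d_{\cg}$-metric rather than the CC-metric matches the parenthetical remark in the text and is convenient because the level sets of $\cg$ are precisely the $d_{\cg}$-spheres. On the support of $\eta_\epsilon$ the test object $\phi^\epsilon_x(y) := \psi(y)(Y\cg)(y^{-1}\cdot x)\eta_\epsilon(y)$ is smooth and compactly supported, so the distributional integration-by-parts formula applies, using that the transpose of a left-invariant horizontal vector field is $-X$ and that $Xg \in L^\infty$ coincides with its distributional derivative by Pansu's theorem. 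A product rule together with the identity $X_y[h(y^{-1}\cdot x)] = -(\wt{X} h)(y^{-1}\cdot x)$, which is a one-line consequence of $h((y\exp(tX))^{-1}\cdot x) = h(\exp(-tX)\cdot y^{-1}\cdot x)$ and the definition of $\wt{X}$, yields three integrals: the $(X\psi)$-term already appearing in the statement, a $\wt{X}(Y\cg)$-term to be identified below, and an error term carrying $X\eta_\epsilon$.

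To rewrite the $\wt{X}(Y\cg)$ integral in the form demanded by the lemma, I need the algebraic identity $(\wt{X}(Y\cg))(z) = (X\wt{Y}\wt{\cg})(z^{-1})$. Since left- and right-invariant vector fields commute, this reduces to relating $\wt{X}\cg(z)$ and $X\wt{\cg}(z^{-1})$, which follows immediately from $\cg((z\exp(tX))^{-1}) = \cg(\exp(-tX)\cdot z^{-1})$ and the definitions of the two types of invariant vector fields. Evaluating at $z = y^{-1}\cdot x$ gives $(\wt{X}(Y\cg))(y^{-1}\cdot x) = (X\wt{Y}\wt{\cg})(x^{-1}\cdot y)$, converting the truncated expression into the truncated version of the desired right-hand side.

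The last step is to pass $\epsilon \to 0$. The two main terms converge by dominated convergence: Lemma \ref{local-implies-global} gives $|g(y)-g(x)|\lesssim d(y,x)$, while Remark \ref{deriv-of-fund-soln} and Proposition \ref{hom-func-bound} imply $|(Y\cg)(p)|\lesssim \|p\|^{1-Q}$ and $|(X\wt{Y}\wt{\cg})(p)|\lesssim \|p\|^{-Q}$, so both integrands are dominated by $d(y,x)^{1-Q}\chi_{\supp(\psi)\cup\supp(X\psi)}(y)$, which is locally integrable. The main obstacle is controlling the error term with $X\eta_\epsilon$: it is supported in the shell $\{\epsilon\leq d_{\cg}(y,x)\leq 2\epsilon\}$ where $|X\eta_\epsilon|\lesssim \epsilon^{-1}$, $|g(y)-g(x)|\lesssim \epsilon$, and $|(Y\cg)(y^{-1}\cdot x)|\lesssim \epsilon^{1-Q}$; since the Haar measure of the shell is $O(\epsilon^Q)$, the total contribution is $O(\epsilon)\to 0$, which closes the argument.
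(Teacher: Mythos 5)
Your argument is correct, but it is organized differently from the paper's proof, and the comparison is instructive. The paper first proves the identity for $g \in C^1$: it removes a small $d_{\cg}$-ball around $x$ (chosen because such balls are $C^1$ domains), applies the divergence theorem on the resulting annular region via \cite[(3.12)]{CMT}, and controls the boundary term as a surface integral over $\partial B(x,\varepsilon)$ using the comparison $\mathcal{S}_{\cg}^{Q-1}(\partial B(x,\varepsilon)) \sim \varepsilon^{Q-1}$; it then extends to general continuous $g$ with bounded horizontal gradient by mollifying ($g_t = \zeta_t \ast g$) and passing to the limit using uniform convergence on compacta. You instead truncate with a smooth cutoff $\eta_\epsilon$, which converts the boundary term into a solid shell integral carrying $X\eta_\epsilon$; your volume estimate $\epsilon^{-1}\cdot\epsilon\cdot\epsilon^{1-Q}\cdot\epsilon^{Q} = O(\epsilon)$ is the exact analogue of the paper's $|I_a| \lesssim \varepsilon$, but it avoids invoking the divergence theorem and the surface-measure comparisons from \cite{M} and \cite{CG} altogether, which is a genuine simplification. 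Your algebraic identity $(\wt{X}(Y\cg))(z) = (X\wt{Y}\wt{\cg})(z^{-1})$ is correct and equivalent to the paper's route of first applying $(Y\cg)(y^{-1}\cdot x) = -(\wt{Y}\wt{\cg})(x^{-1}\cdot y)$ and then differentiating by left-invariance, though your stated justification via commutativity of left- and right-invariant fields is terser than the direct computation from the definitions. The remaining dominated-convergence steps match the paper's treatment of $I_b$ (and you should note, as the paper does for its term $II$, that the truncated left-hand side also converges to the full one, which is immediate from $Xg \in L^\infty$ and local integrability of $\|p\|^{1-Q}$).

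The one point where you lean on an unproved assertion is the claim that the $L^\infty$ pointwise horizontal derivative $Xg$ furnished by Pansu's theorem coincides with the distributional derivative, so that integration by parts against the smooth compactly supported test function $\phi_x^\epsilon$ is legitimate for merely Lipschitz $g$. This is true, but it is not free: the paper's mollification argument in the second half of its proof is precisely the mechanism that supplies this identification in the instance needed. If you want your version to be self-contained at the same level of rigor, you should either prove this coincidence (essentially by the same mollification) or restrict the cutoff argument to smooth $g$ and then run the approximation $g_t \to g$ as the paper does.
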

\begin{proof}
For this proof only, all balls will be formed by the quasi-distance $d_{\cg}(\cdot, \cdot)$ given by (\ref{cg-dist}). We do this because balls for this quasi-distance are $C^1$ domains.

First consider when $g \in C^1$.
Let $\varepsilon > 0$. Note that $\cg(y^{-1}\cdot x)$ is a smooth function in $y$ on the set $B(x,\varepsilon)^c$. Then, 
\begin{align*}
\int (Xg)(y)\psi(y)(Y \cg)(y^{-1}\cdot x)\,dy &= \int_{\overline{B(x, \varepsilon)}^c} (Xg)(y)\psi(y)(Y \cg)(y^{-1}\cdot x)\,dy \\ &\,\,\,\,\,\,\,\,\,\,\,\,\,\,\,\,\,\,\,\,\,\,\,\,\,\,\,\,\,\,\,\,\,\,\,\,\,\,\, +\int_{B(x, \varepsilon)} (Xg)(y)\psi(y)(Y \cg)(y^{-1}\cdot x)\,dy \\ &= \int_{\overline{B(x, \varepsilon)}^c} (X)_y(g(y)-g(x))\psi(y)(Y \cg)(y^{-1}\cdot x)\,dy \\ &\,\,\,\,\,\,\,\,\,\,\,\,\,\,\,\,\,\,\,\,\,\,\,\,\,\,\,\,\,\,\,\,\,\,\,\,\,\,\, +\int_{\overline{B(x, \varepsilon)}} (Xg)(y)\psi(y)(Y \cg)(y^{-1}\cdot x)\,dy  \\ &= I + II 
\end{align*}  
where $(X)_y$ denotes the $X$ derivative in the $y$ variable. Remark \ref{deriv-of-fund-soln} and Proposition \ref{hom-func-bound} say $|Y \cg(p)|\lesssim \| p\|^{1-Q}$ for all $p \in \G \backslash \{0\}$.Integration of annuli gives, \begin{align*}
    |II| &\lesssim \| Xg\|_{L^{\infty}(B)} \| \psi\|_{L^{\infty}(B)} \int_{B(x, \varepsilon)} \| y^{-1}\cdot x\|^{1-Q}\, dy \sim \| X g\|_{L^{\infty}(B)} \| \psi\|_{L^{\infty}(B)} \varepsilon.
\end{align*}
Therefore, \begin{align}
    |II| \rightarrow 0 \,\text{ as }\,\varepsilon\rightarrow 0.\label{II-zero}
\end{align}
Without loss of generality, $X = X_1$. In other words, $X$ is the first element in the Jacobian basis of $\vf_1$. Set $\wt{g} = (g,0, \cdots, 0) \in \R^{m_1}$. Then notice, \begin{align*}
    (X)_y(g(y) - g(x)) = (\text{div}_{\G})_y(\wt{g}(y) - \wt{g}(x))
\end{align*}
where $(\text{div}_{\G})_y$ denotes the horizontal divergence in the $y$ variable. Then notice, \begin{align*}
I &= \int_{\overline{B(x, \varepsilon)}^c} (\text{div}_{\G})_y\left[ (\wt{g}(y) - \wt{g}(x))\psi(y) (Y \cg)(y^{-1}\cdot x)  \right] \,dy  \\ &\hspace{25mm}-\int_{\overline{B(x, \varepsilon)}^c} \langle \wt{g}(y) -\wt{g}(x), (\nabla_{\G})_y(\psi(y)(Y \cg)(y^{-1}\cdot x))\rangle\,dy \\ &=  I_a - I_b
\end{align*}
where $\langle \cdot, \cdot \rangle$ in the integrand of $I_b$ is the inner product on $\R^{m_1}$.
Take a ball $B$ for which $\supp \psi \subset B$ and $B(x,2 \varepsilon) \subset B$. Then $B \cap \overline{B(x, \varepsilon)}^c$ is a bounded $C^1$ domain. Moreover, $\wt{g}$ is a $C^1$ vector field. Then \cite[(3.12)]{CMT} says \begin{align*}
    I_a &= \int_{\partial B(x, \varepsilon)} \langle (\wt{g}(y) - \wt{g}(x))\psi(y) (Y \cg)(y^{-1}\cdot x), \nu \rangle \a\, d\mathcal{S}_{\cg}^{Q-1} 
\end{align*}
using $\supp \psi \subset B$, and where $\mathcal{S}_{\cg}^{Q-1}$ is the $(Q-1)$-dimensional spherical Hausdorff measure induced by (\ref{cg-dist}), $\a$ is Borel function for which $c_1 \leq \a \leq c_2$ $\mathcal{S}_{\cg}^{Q-1}$-a.e. on $\partial B(x,\varepsilon)$ with $0 \leq c_1, c_2 < \infty$, and $\nu$ is a function with $|\nu| = 1$. Let $M = \text{Lip}(g\restrict B)$.
Notice, \begin{align}
    |g(y) - g(x)| \leq M \| y^{-1}\cdot x\| \quad \text{and} \quad |Y \cg(y^{-1}\cdot x)| \lesssim \| y^{-1}\cdot x\|^{1-Q} \,\,\,\text{ for all }x,y \in B. \label{g-and-Ycg}
\end{align}
Moreover, by combining \cite[Theorem 4.1]{M} and \cite[Theorem 1.2 (1.12) $\wedge$ (1.13) $\wedge$ (1.14)]{CG} yields, \begin{align}
 \mathcal{S}_{\cg}^{Q-1}(\partial B(x, \varepsilon)) \sim \varepsilon^{Q-1}.   \label{comp-ball}
\end{align}
Therefore, \begin{align*}
    |I_a| &\overset{(\ref{g-and-Ycg})}{\lesssim} M \| \psi\|_{L^{\infty}} \int_{\| y^{-1}\cdot x\| = \varepsilon} \| y^{-1}\cdot  x\|\| y^{-1}\cdot x\|^{1-Q}\, d \mathcal{S}_{\cg}^{Q-1} = \varepsilon^{2-Q} \mathcal{S}_{\cg}^{Q-1}(\partial B(x, \varepsilon)) \overset{(\ref{comp-ball})}{\sim} \varepsilon.
\end{align*}
Hence, \begin{align}
    I_{a} \rightarrow 0\, \text{ as }\, \varepsilon \rightarrow 0. \label{ia-zero}
\end{align}

We now treat $I_b$. By definition of $\wt{g}$ and expanding out the derivative, \begin{align*}
    I_b  &= \int_{\overline{B(x,\varepsilon)}^{c}} (g(y)-g(x)) (X)_y (\psi(y) (Y \cg)(y^{-1}\cdot x))\, dy \\ &= \int_{\overline{B(x,\varepsilon)}^{c}} (g(y)-g(x))\psi(y) (X)_y(Y \cg(y^{-1}\cdot x))\,dy+\int_{\overline{B(x,\varepsilon)}^{c}} (g(y)-g(x)) (X \psi)(y)(Y \cg)(y^{-1}\cdot x)\,dy.
\end{align*}
We now need to deal with $(X)_y(Y \cg(y^{-1}\cdot x))$.
By \cite[Equation $(1.11)$ on page $26$]{FR} we have \begin{equation}\label{switch-variables}
(Y\cg)(y^{-1}\cdot x) = -(\wt{Y}\wt{\cg})(x^{-1}\cdot y)
\end{equation}
where $\wt{Y}$ is using the notation of (\ref{defn-of-right-inv-vf}). Now (\ref{switch-variables}) and left-invariance of $X$ give, 
\begin{equation}\label{two-derivs-cg}
(X)_y \left((Y \cg)(y^{-1}\cdot x)\right) = - (X \wt{Y}\wt{\cg})(x^{-1}\cdot y).
\end{equation}
We note that (\ref{two-derivs-cg}) is valid by (\ref{defn-of-left-inv-vf}). In particular, notice that the definition of $X f(p)$, for a smooth enough $f$, is independent of $p$. So once we apply the $(X)_y$ derivative to $\wt{Y} \wt{\Gamma}$, we can drop the $y$-subscript. 

It is simple to check that since $\cg$ is homogeneous of degree $2-Q$ on $\G \backslash \{0\}$ that we also have $\wt{\cg}$ is homogeneous of degree $2-Q$ on $\G \backslash \{0\}$. Furthermore, as $Y$ is $1$-homogeneous we also have $\wt{Y}$ is $1$-homogeneous. Hence, applying Remark \ref{deriv-of-fund-soln} and Proposition \ref{hom-func-bound} twice gives, \begin{align*}
    |(X \wt{Y}\wt{\cg})(p)| \lesssim \| p\|^{-Q} \quad \text{ for all }p \in \G \backslash \{0\}.
\end{align*}

We may now write, \begin{align*}
    I_b &= -\int_{\overline{B(x,\varepsilon)}^c}  (g(y) - g(x)) \psi(y) (X \wt{Y}\wt{\cg})(x^{-1}\cdot y)\, dy \int_{\overline{B(x,\varepsilon)}^c} (g(y)-g(x))(X\psi)(y) (Y\cg)(y^{-1}\cdot x)\,dy \\ &=-I_{b_1} + I_{b_2}.
\end{align*}

Let $M$ be as before. Then, \begin{align*}
    |I_{b_1}| &\lesssim M\| \psi\|_{L^{\infty}} \int_B \| y^{-1}\cdot x\| \| y^{-1}\cdot x\|^{-Q}\,dy = M \| \psi\|_{L^{\infty}} \int_B \| y^{-1}\cdot x\|^{1-Q}\,dy < \infty
\end{align*}
where the last inequality follows since the function $p \mapsto \|p\|^{1-Q}$ is in $L_{\text{loc}}^1$. Similarly, \begin{align*}
    |I_{b_2}| < \infty.
\end{align*}
Moreover, the integrands in both $I_{b_1}$ and $I_{b_2}$ are continuous on $\G \backslash \{x\}$ (which is of full measure). So by the dominated convergence theorem, \begin{align}
    I_b &\overset{\varepsilon\rightarrow 0}\longrightarrow -\int (g(y) -g(x))\psi(y)(X \wt{Y}\wt{\cg})(x^{-1}\cdot y)\,dy + \int (g(y) - g(x))(X \psi)(y) (Y \cg)(y^{-1}\cdot x)\,dy. \label{i_b-zero}
\end{align}
Combining (\ref{i_b-zero}), (\ref{ia-zero}), and (\ref{II-zero}) then yields the conclusion for when $g \in C^1$.

Now suppose $g$ is continuous and satisfies $\| \nabla_{\G} g\|_{L^{\infty}}\lesssim 1$. Take $\zeta \in C_0^{\infty}(B(0,1))$ for which $\zeta(p^{-1}) = \zeta(p)$ for all $p \in \G$, $0 \leq \zeta \leq 1$, and $\int_{\G} \zeta = 1$. Set $\zeta_{t}(p) = t^{-Q}\zeta(\delta_{1/t}(p))$ and $g_t(p) = \zeta_t \ast g(p)$. Let $r_y : \G \rightarrow \G$ and $i : \G \rightarrow \G$ be the functions defined by $r_y(x) := x \cdot y$ and $i(x) := x^{-1}$. By definition of a Lie group, both $r_y$ and $i$ are $C^{\infty}$ maps (for all $y \in \G$). Now we can write, \begin{align*}
    g_t(p) = \int_K \zeta_t \circ i \circ r_{p^{-1}}(q) g(q)\,dq
\end{align*}
where $K$ is the support of $\zeta_t \circ i \circ r_{p^{-1}}$, which is also a compact set. Moreover, for any $p \in \G$, \begin{align*}
    |g_t(p)| \leq \|\zeta_t \circ i \circ r_{p^{-1}} \|_{L^{\infty}(K)} \| g\|_{L^{\infty}(K)} |K| < \infty
\end{align*}
where $|\cdot |$ is Lebesgue measure. Hence, as $\zeta_t (p\cdot q^{-1})$ is $C^{\infty}$ in $p$, we can move the derivative under the integral to obtain that $g_t \in C^{\infty}$. So the previous case gives, \begin{align*}
\int (Xg_t)(y)\psi(y)(Y \cg)(y^{-1}\cdot x)\,dy &= \int (g_t(y) - g_t(x)) \psi(y) (X \wt{Y}\wt{\cg})(x^{-1}\cdot y)\,dy \\ &\,\,\,\,\,\,\,\,\,\,\,\,\,\,\,\,\,\,\,\,\,\,\,\,\,\,\,\,\,\,\,\,\,\,- \int (g_t(y)-g_t(x)) (X \psi)(y)(Y \cg)(y^{-1}\cdot x)\,dp.     
\end{align*}
By \cite[Proposition 1.28 (v)]{Vi}, $g_t \rightarrow g$ uniformly on compact subsets of $\G$. Hence, since each integral above is supported in $\supp \psi$, sending $t \rightarrow 0$ then gives the conclusion.
\end{proof}

For a function $h$ define $\wt{h}(x):= h(x^{-1})$ wherever defined.

\begin{prop}\label{decay-of-two-derivs}
Let $g\in \text{Lip}_{\text{loc}}$ and $\varphi \in C_0^{\infty}(B(a,\delta))$. Set $\xi(x) = g \varphi \ast Y\wt{\cg}(x)$, where $Y$ is a $2$-homogeneous differential operator. Then there exists $R < \infty$ such that $$|\xi(x)| \leq 1 \quad \text{ for all }x\in B(a,R)^c.$$
\end{prop}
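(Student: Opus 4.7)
The plan is to mirror the proof of Proposition \ref{decay-of-one-deriv} almost verbatim, since the only real difference is the homogeneity degree of the kernel we are integrating against. Because we only need to control $\xi(x)$ for $x$ far from the support of $g\varphi$, the fact that $Y\wt{\cg}$ fails to be locally integrable at $0$ (which is the obstacle motivating the later lemmas) never enters; the integrand has no singularity in the region we care about.

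First I would record the pointwise bound on the kernel. Since $\cg$ is smooth and homogeneous of degree $2-Q$ on $\G\setminus\{0\}$, the same is true of $\wt{\cg}(p)=\cg(p^{-1})$ using the symmetry $\cg(p^{-1})=\cg(p)$. Because $Y$ is a $2$-homogeneous differential operator, Remark \ref{deriv-of-fund-soln} and Proposition \ref{hom-func-bound} together give
\begin{equation*}
|Y\wt{\cg}(p)|\lesssim \|p\|^{-Q}\quad\text{for all } p\in\G\setminus\{0\}.
\end{equation*}

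Next I would set $h:=g\varphi$. Since $g$ is locally Lipschitz, it is continuous on the compact set $\overline{B(a,\delta)}\supset\supp\varphi$, so $M:=\|h\|_{L^\infty}<\infty$ and $\supp h\subset B(a,\delta)$. Writing the convolution via (\ref{convolution}), I get
\begin{equation*}
|\xi(x)|=\left|\int_{B(a,\delta)} h(y)\,Y\wt{\cg}(y^{-1}\cdot x)\,dy\right|\leq M\int_{B(a,\delta)}\|y^{-1}\cdot x\|^{-Q}\,dy.
\end{equation*}
For $x\in B(a,2\delta)^c$ and $y\in B(a,\delta)$, the symmetry and triangle inequality for the CC-metric (used identically in the proof of Proposition \ref{decay-of-one-deriv}) yield
\begin{equation*}
\|y^{-1}\cdot x\|=\|y^{-1}\cdot a\cdot a^{-1}\cdot x\|\geq \|x^{-1}\cdot a\|-\|a^{-1}\cdot y\|\geq \tfrac{1}{2}\|x^{-1}\cdot a\|.
\end{equation*}
Substituting this into the bound above gives
\begin{equation*}
|\xi(x)|\lesssim M\,\delta^Q\,\|x^{-1}\cdot a\|^{-Q}.
\end{equation*}

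Finally, since $M$ and $\delta$ are fixed constants, the right-hand side tends to $0$ as $\|x^{-1}\cdot a\|\to\infty$, so I can choose $R\geq 2\delta$ large enough that $M\delta^Q\|x^{-1}\cdot a\|^{-Q}\leq 1$ whenever $\|x^{-1}\cdot a\|\geq R$, which is exactly the condition $x\in B(a,R)^c$. There is no real obstacle here; the step that required care in the one-derivative case (choosing $R$ so that the tail bound swallows the implicit constants) is identical, and the increased homogeneity exponent $-Q$ is compensated by the $\delta^Q$ coming from the volume of the support. The proposition is essentially a standing decay estimate that will feed into the Lipschitz bound for the remaining piece of the distributional decomposition (\ref{dist-form-lip}).
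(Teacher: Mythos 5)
Your proposal is correct and is exactly the argument the paper intends: the paper's proof of Proposition \ref{decay-of-two-derivs} simply defers to the proof of Proposition \ref{decay-of-one-deriv}, and you have carried out that analogy faithfully, with the kernel bound $|Y\wt{\cg}(p)|\lesssim\|p\|^{-Q}$ replacing $\|p\|^{1-Q}$ and the rest of the estimate unchanged. Nothing is missing.
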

\begin{proof}
The proof follows in a similar way to the proof of Proposition \ref{decay-of-one-deriv}.
%It is easy to check that since $\cg$ is homogeneous of degree $2-Q$ on $\G \backslash \{0\}$ that $\wt{\cg}$ is also homogeneous of degree $2-Q$ on $\G \backslash \{0\}$. So Remark \ref{deriv-of-fund-soln} and Proposition \ref{hom-func-bound} say $|Y \wt{\cg}(p)|\lesssim \| p\|^{-Q}$ for all $p \in \G \backslash \{0\}$. Set $h = g\varphi$ so that $\supp (h) \subset B(a,\delta)$ and $M := \| h\|_{\infty} < \infty$. Then,  \begin{align}
    %|\xi(x)| &= \left| \int_{B(a,\delta)}h(y) Y \wt{\cg}(y^{-1}\cdot x)\,dy\right| \nonumber \\ &\leq M \int_{B(a,\delta)}\| y^{-1}\cdot x\|^{-Q}\,dy. \label{step1}
%\end{align}
%The CC-distance satisfies the triangle inequality and is symmetric, so $\| p^{-1}\cdot q\| \geq \| q\| -\|p\|$. Take any $x \in B(a,2\delta)^c$. Then for $y \in B(a,\delta)$, clearly $$\| y^{-1}\cdot x\| = \| y^{-1}\cdot a \cdot a^{-1}\cdot x\| \geq \|x^{-1}\cdot a\|/2.$$ Resuming from (\ref{step1}) then gives, \begin{align*}
%|\xi(x)| \lesssim M \|x^{-1}\cdot a\|^{-Q} \delta^Q
%\end{align*}
%which completes the proof since $\delta$ and $M$ are fixed.
\end{proof}

\begin{lem}\label{second_func_in_lip}
Take $f\in \text{Lip}_{\text{loc}}$ and $\varphi \in C_0^{\infty}(B(a,\delta))$. Then the function $\Theta$ defined by $\Theta(x) := X_i(f X_i \varphi) \ast \cg(x)$ is continuous and satisfies $\| \nabla_{\G} \Theta \|_{\infty} < \infty$. In particular, Corollary \ref{bounded-deriv-is-lip} gives $\Theta \in \text{Lip}$.  
\end{lem}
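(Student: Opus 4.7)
The plan is to rewrite $\Theta$ as a classical convolution, handle continuity directly, and then split the $X_j\Theta$ analysis into a far-field region (where we differentiate classically under the integral) and a near-field region (where Lemma \ref{real-ibp} tames a non-integrable kernel via the Lipschitz-ness of $f$).

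First, since $fX_i\varphi$ is Lipschitz with compact support in $B(a,\delta)$ (by Lemma \ref{first_func_is_lip}) and $\wt{X_i}\cg$ is a kernel of type $1$ (so $|\wt{X_i}\cg(p)|\lesssim\|p\|^{1-Q}$, locally integrable), a distributional integration-by-parts (justified by mollifying $fX_i\varphi$ and passing to the limit, exactly as in the final step of the proof of Lemma \ref{real-ibp}) gives
\[
\Theta(x) = X_i(fX_i\varphi)\ast \cg(x) = (fX_i\varphi)\ast \wt{X_i}\cg(x) = \int f(y)(X_i\varphi)(y)(\wt{X_i}\cg)(y^{-1}\cdot x)\,dy.
\]
Continuity of $\Theta$ then follows from dominated convergence on this representation (the $y$-integrand is bounded and compactly supported, the $L^1_{\mathrm{loc}}$ singularity in $x$ is integrable). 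For the gradient bound, fix $X_j\in\mathcal{X}$ and split into two regions.

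For $x\in B(a,2\delta)^c$: for all $y\in\supp(X_i\varphi)\subset B(a,\delta)$ one has $\|y^{-1}\cdot x\|\geq\|x^{-1}\cdot a\|/2\geq \delta$, so the integrand above is smooth in $x$ uniformly in $y$. Differentiating under the integral via left-invariance yields $X_j\Theta(x) = \int f(y)(X_i\varphi)(y)(X_j\wt{X_i}\cg)(y^{-1}\cdot x)\,dy$; Remark \ref{deriv-of-fund-soln} and Proposition \ref{hom-func-bound} give $|X_j\wt{X_i}\cg(p)|\lesssim\|p\|^{-Q}$, so
\[
|X_j\Theta(x)|\lesssim \|f\|_{L^\infty(B(a,\delta))}\|X_i\varphi\|_\infty\,\delta^Q\|x^{-1}\cdot a\|^{-Q},
\]
bounded uniformly on $B(a,2\delta)^c$.

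For $x\in B(a,2\delta)$: I would write $X_j\Theta = X_i(fX_i\varphi)\ast X_j\cg$ by left-invariance and expand $X_i(fX_i\varphi) = (X_if)(X_i\varphi)+f(X_i^2\varphi)$ almost everywhere (Pansu differentiability plus the product rule, valid since $fX_i\varphi$ is Lipschitz with compact support). Convolving against the $L^1_{\mathrm{loc}}$ kernel $X_j\cg$ then produces an absolutely convergent integral. Applying Lemma \ref{real-ibp} with $g=f$, $\psi=X_i\varphi$, $X=X_i$, $Y=X_j$ to the $(X_if)(X_i\varphi)$ piece and adding back the $f(X_i^2\varphi)$ piece, the two $X_j\cg$ integrals collapse via the cancellation $-(f(y)-f(x))+f(y)=f(x)$, leaving
\[
X_j\Theta(x) = \int (f(y)-f(x))(X_i\varphi)(y)(X_i\wt{X_j}\wt{\cg})(x^{-1}\cdot y)\,dy + f(x)\bigl((X_i^2\varphi)\ast X_j\cg\bigr)(x).
\]
The first integral is the delicate one: $X_i\wt{X_j}\wt{\cg}$ is smooth and homogeneous of degree $-Q$, hence not locally integrable, but the local Lipschitz bound $|f(y)-f(x)|\leq M\|y^{-1}\cdot x\|$ (with $M=\text{Lip}(f\restrict B(a,2\delta))$) upgrades the singularity to the integrable $\|y^{-1}\cdot x\|^{1-Q}$, and the annular computation used in Lemma \ref{third_func_in_lip} bounds the integral by a constant depending only on $M$, $\delta$, and $\varphi$. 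The second term is bounded because $f$ is continuous on the compact set $\overline{B(a,2\delta)}$ and the same annular bound controls $(X_i^2\varphi)\ast X_j\cg$. Combining the two regions gives $\|\nabla_\G\Theta\|_{L^\infty}<\infty$, and Corollary \ref{bounded-deriv-is-lip} yields $\Theta\in\text{Lip}$. The main obstacle is the near-field analysis: naive differentiation under the integral produces the $-Q$-homogeneous kernel $X_j\wt{X_i}\cg$, and Lemma \ref{real-ibp} is exactly what lets us trade a derivative on $f$ (which we do not have classically) for the Lipschitz difference $f(y)-f(x)$, restoring local integrability.
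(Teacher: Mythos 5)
Your proof is correct and follows essentially the same route as the paper: a far-field decay bound (the paper's Proposition \ref{decay-of-two-derivs}), then the product-rule splitting of $X_i(fX_i\varphi)$ and an application of Lemma \ref{real-ibp} to trade $X_if$ for the difference $f(y)-f(x)$, which upgrades the non-integrable $-Q$-homogeneous kernel $X_i\wt{X_j}\wt{\cg}$ to the locally integrable $\|x^{-1}\cdot y\|^{1-Q}$ handled by the annular estimate. The only deviations are cosmetic: your cancellation collapsing the paper's two remaining terms into $f(x)\bigl((X_i^2\varphi)\ast X_j\cg\bigr)(x)$, and your explicit rewriting $\Theta=(fX_i\varphi)\ast\wt{X_i}\cg$ for continuity.
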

\begin{proof}
%Take $x \in \left( \overline{B(a,\delta)} \right)^c$. Then left-invariance of $\L$ gives, \begin{align*}
%\L \left[(X_i(f \psi) \ast \cg\right](x) = \int X_i(f(y) \psi(y)) (\L \cg)(y^{-1}\cdot x)\,dy = 0
%\end{align*}
%as $\cg$ is the fundamental solution. By hypoellipticity of $\L$, this implies $\Theta \in C^{\infty}\left( \left(\overline{B(a,\delta)}\right)^c\right)$. Hence, for the claim $\Theta \in C^1$, it suffices to show $\Theta \in C^1\left(B(a,2\delta)\right)$. 
Set $\psi = X_i \varphi$ and take any $X \in \mathcal{X}$.
Proposition \ref{decay-of-two-derivs} says that there exists $R > 0$ such that \begin{align}
    |X \Theta(x)| \leq 1 \quad \text{ for all }x \in B(a, R)^c. \label{decay-ball}
\end{align}

Now take $x \in B(a,R)$. Left-invariance of $X$ gives, \begin{align*}
X\Theta (x) &= X_i(f \psi) \ast X \cg(x) \\ &= \int (X_if(y))\psi(y) X \cg(y^{-1}\cdot x)\,dy + \int f(y)(X_i \psi(y)) X\cg(y^{-1}\cdot x)\,dy \\ &=: I(x)+II(x).
\end{align*}
Essentially the same proof of Lemma \ref{third_func_in_lip} shows $II \in \text{Lip}$, so we will only show $I(x) \in \text{Lip}$. 

The same reasoning as (\ref{switch-variables}) and (\ref{two-derivs-cg}) gives, \begin{align}
    (X_i)_y (X \cg)(y^{-1}\cdot x) = - (X_i \wt{X}\wt{\cg})(x^{-1}\cdot y). \label{dch}
\end{align}
 Inputting (\ref{dch}) into $I(x)$ and using integration by parts, i.e. Lemma \ref{real-ibp}, we get \begin{align*}
I(x) &= \int (X_i)_y(f(y)-f(x)) \psi(y) X \cg(y^{-1}\cdot x)\,dy
\\ &=\int (f(y) -f(x)) X_i \psi(y) X\cg(y^{-1}\cdot x) \,dy - \int (f(y) -f(x))\psi(y)(X_i \wt{X}\wt{\cg})(x^{-1} \cdot y)\,dy \\ &=: I_a(x) - I_b(x).
\end{align*}
 Again, Lemma \ref{third_func_in_lip} shows $I_a \in \text{Lip}$, so we only focus on $I_b$.

Remark \ref{deriv-of-fund-soln} and Proposition \ref{hom-func-bound} says $|(X_i \wt{X}\wt{\cg})(p)| \lesssim \| p\|^{-Q}$ for all $p \in \G \backslash \{0\}$. Since $f \in \text{Lip}_{\text{loc}}$ and $\supp(\psi) \subset B(a,\delta)$ we therefore have, \begin{align}
|I_b(x)| &\lesssim \text{Lip}(f \restrict \supp \psi)\| \psi\|_{\infty}\int_{B(a,\delta)} \| x^{-1}\cdot y\|^{1-Q}\,dy \nonumber \\ &\lesssim \sum_{j=0}^{\infty} \int_{ 2^{-j-1}(R+\delta) < \| x^{-1}\cdot y\| \leq 2^{-j} (R+\delta)} \|x^{-1}\cdot y\|^{1-Q}\,dy \nonumber \\ &\lesssim \sum_{j=0}^{\infty} (2^{-j-1}(R+\delta))^{1-Q} \left| B(x, 2^{-j}(R+\delta)) \right| \nonumber \\ &\sim (R +\delta) \sum_{j=0}^{\infty} 2^{-j} = R+\delta \label{dct-bound-1}
\end{align}
%Furthermore, a change of variables gives, \begin{align}
%I_b(x) &= \int (f(x \cdot p)-f(x))\psi(x\cdot p) (X_i \wt{X}\wt{\cg})(p)\,dp \label{dct-cont-1}
%\end{align}
%and everything with an $x$ in the integrand is continuous. Hence, (\ref{dct-bound-1}) and (\ref{dct-cont-1}) allow the use of the dominated convergence theorem to say $I_b(x)$ is continuous on $B(a,2\delta)$. 

Combined with (\ref{decay-ball}), this shows $\| \nabla_{\G} \Theta\|_{\infty} < \infty$. Lastly, the same reasoning as at the end of the proof of Lemma \ref{third_func_in_lip} gives that $\Theta$ is continuous. The proof is complete.
\end{proof}

\vskip2em

\section{Dyadic tilings and the Harvey-Polking Lemma in $\G$}\label{section-3}
The first part of this section is simply restating the results from Section $3$ in \cite{CT}, so we direct the reader there for all the proofs. 

The purpose of this section is introducing a sort of "dyadic structure" in Carnot groups which is conducive to the removability results for Sections \ref{section-4} and \ref{section-6}. In particular, introducing a structure which allows for a generalization of the well-known Harvey-Polking partition of unity \cite[Lemma 3.1]{HP}. We use a modified Hausdorff measure, but the introduction of this measure requires some setup. Let $| \cdot |$ denote Lebesgue measure. 
\begin{thm}\cite[Theorem 3.1]{CT}
    Let $\G$ be a Carnot group of homogeneous dimension $A$. There exists $\frac{1}{2}$-homotheties $f_1, \cdots , f_M \in \G$, $M= 2^Q$, and a compact set $T \subset \G$ so that $T = \bigcup_{j=1}^M T_j$ where $T_j = f_j(T)$. Moreover $$0 < |T| < \infty$$ and $|T_j \cap T_i| = 0$ whenever $j \neq i$.
\end{thm}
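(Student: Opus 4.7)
My plan is to construct $T$ as the attractor of a self-similar iterated function system of $\tfrac{1}{2}$-contractions. The first observation is that since left translations are isometries of $(\G, d)$ and $\delta_{1/2}$ rescales the CC-distance by the factor $\tfrac{1}{2}$, every map of the form $f_s(x) := s \cdot \delta_{1/2}(x)$ with $s \in \G$ is a strict $\tfrac{1}{2}$-contraction on the complete metric space $(\G, d)$. Given any finite collection $f_{s_1}, \ldots, f_{s_M}$, Hutchinson's theorem applied to the hyperspace of nonempty compact subsets of $\G$ (with the Hausdorff distance) produces a unique nonempty compact $T \subset \G$ satisfying $T = \bigcup_{j=1}^{M} f_{s_j}(T)$. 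Compactness gives $|T| < \infty$ immediately, and $Q$-homogeneity of Haar measure under $\delta_{1/2}$ together with its left-invariance yields $|f_{s_j}(T)| = 2^{-Q}|T|$ for every $j$.

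The substantive step is to select the translation points $S = \{s_1, \ldots, s_M\}$ with $M = 2^Q$ so that Hutchinson's open set condition holds, namely that there is a nonempty bounded open $U \subset \G$ with $f_{s_j}(U) \subset U$ for each $j$ and $f_{s_i}(U) \cap f_{s_j}(U) = \emptyset$ for $i \neq j$. A natural approach is to work in exponential coordinates and take $U$ to be a box adapted to the grading, so that $\delta_{1/2}(U)$ is a scaled box of Lebesgue measure $2^{-Q}|U|$; the points $s_j$ are then chosen as the $\prod_{i=1}^{s} 2^{i\,\dim \vf_i} = 2^Q$ ``dyadic'' corners of a graded refinement of $U$. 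Once the open set condition is established, the equality $\sum_{j=1}^{M}(1/2)^Q = M \cdot 2^{-Q} = 1$ places us in the critical case of the Hutchinson/Moran theory, so the attractor has positive and finite $Q$-dimensional Hausdorff measure; since the latter agrees with Haar measure up to a multiplicative constant, $0 < |T| < \infty$. The disjointness $f_{s_i}(U) \cap f_{s_j}(U) = \emptyset$ together with $T_j \subset \overline{f_{s_j}(U)}$ and $|\overline{f_{s_j}(U)} \setminus f_{s_j}(U)| = 0$ then yields $|T_i \cap T_j| = 0$.

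The main obstacle is producing $S$ and $U$ making the open set condition work. Non-commutativity means $s_j \cdot \delta_{1/2}(U)$ is not a Euclidean translate of $\delta_{1/2}(U)$: in higher strata the group law introduces polynomial (Baker--Campbell--Hausdorff) corrections depending on first-layer coordinates. I would handle this by inducting on the step of $\G$, fixing the first-layer tiling by the standard Euclidean dyadic packing of $\R^{m_1}$ and then, at each successive stratum, using the remaining freedom in the $s_j$ to absorb the BCH corrections stratum by stratum. Alternatively, one can begin from a uniform lattice $\cg \subset \G$, which is available by Malcev's theorem because Carnot groups have rational structure constants, and realize $T$ as a fundamental domain for $\cg$ whose $2^Q$-fold dyadic refinement matches the cosets of $\delta_{1/2}(\cg)$ inside $\cg$ together with the boundary identifications.
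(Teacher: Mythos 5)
First, a point of orientation: the paper does not prove this statement at all --- it is imported verbatim from \cite{CT} (whose Section 3 in turn follows Strichartz's construction of self-similar tilings of homogeneous groups), so there is no in-paper argument to match. Measured against that known proof, your skeleton is the right one: the maps $f_s(x)=s\cdot\delta_{1/2}(x)$ are genuine $\tfrac12$-contractions for the CC metric, Hutchinson's fixed-point argument on the hyperspace of compacta gives $T$, and left-invariance plus the Jacobian $2^{-Q}$ of $\delta_{1/2}$ gives $|T_j|=2^{-Q}|T|$. One simplification you miss: the essential disjointness is \emph{automatic} for any choice of digits, since $|T|=|\bigcup_j T_j|\le\sum_j|T_j|=2^Q\cdot 2^{-Q}|T|=|T|$ forces $|T_i\cap T_j|=0$ once $|T|<\infty$. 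So the entire content of the theorem is the single inequality $|T|>0$, and that is exactly the step your proposal does not carry out.

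Both of your fallback strategies for that step have problems. The Malcev route fails in general: Malcev's criterion requires the Lie algebra to have rational structure constants, and not every stratified Lie algebra admits a rational form (there are continuum-many isomorphism classes of step-$2$ algebras in high dimension, so most admit no lattice); the theorem, however, is asserted for \emph{every} Carnot group. The layer-by-layer route is the correct one (it is Strichartz's), but as stated it is a plan rather than a proof: the BCH corrections $P_k(x_{<k},y_{<k})$ in layer $k$ shear the translated boxes, so it is not clear that a fixed graded box $U$ can satisfy $f_{s_j}(U)\subset U$ with pairwise disjoint images, and the known argument does not in fact verify an open set condition for a box. Instead it builds a discrete, dilation-compatible digit system $\Gamma=\bigsqcup_{j=1}^{2^Q} s_j\cdot\delta_2(\Gamma)$ inductively over the strata, shows that the translates $\gamma\cdot T$, $\gamma\in\Gamma$, cover $\G$, and deduces $|T|>0$ because a countable union of translates of a null set cannot cover $\G$. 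Relatedly, your appeal to ``Hutchinson/Moran theory'' for positivity of $\mathcal{H}^Q(T)$ at the critical exponent is not safe in a general metric space (Schief's theorem is Euclidean); if you do manage to produce $U$ with $\bigcup_j f_{s_j}(U)\subset U$ disjointly, the clean conclusion is the elementary one $|U\setminus\bigcup_j f_{s_j}(U)|=0$, whence by iteration $|T|\ge|U|>0$. As it stands, the proposal reduces the theorem to its hardest ingredient and leaves that ingredient unproved, with one of the two suggested completions unavailable in the stated generality.
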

We will call $T$ the \textit{fundamental tile}. Let $W=\{1,\ldots,M\}$. For $m\geq 0$ and $w=w_1\cdots w_m \in W^m$ we
introduce the notation
\begin{equation}
f_w = f_{w_1}\circ\cdots\circ f_{w_m} \nonumber
\end{equation}
and $T_w=f_w(T)$. We denote by $\mathcal{D}_m$ the family of sets $T_w$ as $w$
ranges over $W^m$.

\begin{prop}\cite[Proposition 3.4]{CT}\label{T-property-1}
The set $T$ is the closure of an open set.
\end{prop}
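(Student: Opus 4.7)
The plan is in two stages: first establishing that $T$ has non-empty interior, and then upgrading this to $T = \overline{\mathrm{int}(T)}$ using the iterative tile structure of $T$.

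For the upgrade, the argument is direct. Given $x \in T$ and $\varepsilon > 0$, the level-$m$ collection $\{T_w : w \in W^m\}$ covers $T$ and consists of tiles of diameter $2^{-m}\mathrm{diam}(T)$, so for $m$ large enough there is $w \in W^m$ with $x \in T_w \subset B(x, \varepsilon)$. Since $T_w = f_w(T)$ is a homothetic image of $T$, non-empty interior of $T$ transfers to $T_w$. Moreover because $T_w \subset T$, any set open in $\G$ and contained in $T_w$ is also contained in $T$, so $\mathrm{int}(T_w) \subset \mathrm{int}(T)$. Picking any $y \in \mathrm{int}(T_w) \subset B(x,\varepsilon)$ produces a point of $\mathrm{int}(T)$ within $\varepsilon$ of $x$, proving $x \in \overline{\mathrm{int}(T)}$; the reverse inclusion is automatic since $T$ is closed.

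For the first stage, the plan is to show $|\partial T| = 0$, where $\partial T$ denotes the topological boundary; combined with $|T| > 0$ this forces $\mathrm{int}(T) \neq \emptyset$. The idea is that each $p \in \partial T$ either lies in an overlap $T_i \cap T_j$ with $i \neq j$ (a set of Lebesgue measure zero by hypothesis), or else $p$ lies in a unique $T_i$ and therefore in $\partial T_i = f_i(\partial T)$. Iterating this dichotomy across refinement levels and collecting the countably many overlap sets at each level — all of measure zero — confines the ``non-overlap'' portion of $\partial T$ to nested chains of tiles $T_{w^{(1)}} \supset T_{w^{(2)}} \supset \cdots$ of vanishing diameter $2^{-m}\mathrm{diam}(T)$, whose intersection is Lebesgue-negligible.

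The main obstacle is precisely this first stage. A single self-similarity application yields only the tautology $|\partial T| \leq M \cdot 2^{-Q}|\partial T| = |\partial T|$, and naively a compact set of positive Lebesgue measure may have empty interior (as with a fat Cantor set). What forces non-empty interior here is the matching $M = 2^Q$, which aligns the similarity dimension of $T$ with the homogeneous dimension of $\G$, combined with the positive-measure and measure-zero-overlap hypotheses — a Schief-type conclusion for self-similar attractors. A cleaner route, and plausibly the one taken in \cite[Proposition 3.4]{CT}, is to observe that the construction of $T$ in \cite[Theorem 3.1]{CT} already presents $T$ as the closure of an explicit open set (for instance via the exponential image of a box in $\g$), which renders Proposition \ref{T-property-1} immediate from the construction rather than from the abstract self-similar framework.
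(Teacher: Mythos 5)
Your second stage (upgrading nonempty interior to $T=\overline{\mathrm{int}(T)}$ via small sub-tiles $T_w\ni x$ with $\diam T_w<\varepsilon$ and $\mathrm{int}(T_w)=f_w(\mathrm{int}(T))\subset\mathrm{int}(T)$) is correct and is the same refinement argument used in \cite[Proposition 3.4]{CT}, to which this paper defers for the proof. The problem is the first stage, and you have correctly located it but not closed it. The mechanism you sketch does not work: decomposing $\partial T$ into level-$m$ overlaps (measure zero) plus points lying in nested chains $T_{w^{(1)}}\supset T_{w^{(2)}}\supset\cdots$ of vanishing diameter proves nothing, because although each individual chain intersects in a single point, there are uncountably many chains and their union is all of $T$ minus a null set -- i.e.\ a set of measure $|T|>0$, not a negligible one. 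So this route genuinely terminates in the tautology $|\partial T|\le M\cdot 2^{-Q}|\partial T|$ that you yourself identify. Appealing to ``a Schief-type conclusion'' is also not a proof: Schief's theorem is established for Euclidean similarities, its extensions to general metric settings are known to be delicate (Schief himself showed positive measure need not imply the open set condition outside $\R^n$), and nothing in the quoted statement of \cite[Theorem 3.1]{CT} hands you such a result for Carnot homotheties.

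The input you are missing is a piece of structure from Strichartz's construction underlying \cite[Theorem 3.1]{CT} that is not visible in the self-similar identity $T=\bigcup_j f_j(T)$ alone: the tile $T$ also tiles $\G$ by left translations under a discrete cocompact subgroup $\Gamma$, so that $\G=\bigcup_{\gamma\in\Gamma}\gamma\cdot T$ with the translates essentially disjoint. Since $\G$ is a complete (hence Baire) space written as a countable union of the closed sets $\gamma\cdot T$, some translate -- and therefore $T$ itself, translations being homeomorphisms -- has nonempty interior. This replaces your entire first stage; no measure-theoretic argument on $\partial T$ and no Schief-type theorem is needed. Your closing guess that the construction ``already presents $T$ as the closure of an explicit open set'' is not how \cite{CT} proceeds: nonempty interior is derived from the translational tiling by Baire category, and density of the interior is then obtained exactly as in your second stage.
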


By Proposition \ref{T-property-1}, select a point $p \in
T$ and radii $0<\Rinn<\Rout$ so that $B(p,\Rinn) \subset T
\subset B(p,\Rout)$. Fixing $p$, we assume that $\Rinn$ has been
chosen as large as possible and $\Rout$ has been chosen as small as
possible subject to the preceding constraint. Then $\Rout < \diam T$. We call $p$ the
\textit{center} of $T$ and $\Rinn$ and $\Rout$ the \textit{inner} and
\textit{outer radii} of $T$, respectively.

For each $w \in W^m$, $m\ge 0$, we define the center of the \textit{tile}
$T_w$ to be $p_w = f_w(p)$, and the inner and outer radii of $T_w$ to be
$$
\Rinn_w = 2^{-m}\Rinn \quad \mbox{and} \quad \Rout_w=2^{-m}\Rout,
$$
respectively. We have $B(p_w,\Rinn_w) \subset T_w \subset B(p_w,\Rout_w)$.

The Hausdorff measures and dimensions of bounded subsets of $\G$ can
be computed using a ``dyadic Hausdorff measure'' constructed using the
tiles $\{T_w\}$. By applying a dilation, we may restrict our attention
to subsets of the initial tile $T$. We explicitly remark that we can do this since we will only be working with bounded subsets of $\G$. Let $\cD_* = \cup_{m\ge 0} \cD_m$.
For a set $A \subset T$ and $s,\varepsilon>0$, define
$$
\cH_{\cD,\varepsilon}^s(A) = \inf \sum_i (\diam T_{w_i})^s
$$
where the infimum is taken over all coverings of $A$ by tiles $T_{w_i}
\in \cD_*$ with $\diam T_{w_i} < \varepsilon$. Define $\cH_\cD^s(A) =
\lim_{\varepsilon \rightarrow 0} \cH_{\cD,\varepsilon}^s(A)$. For each $s>0$, $\cH_\cD^s$ is a
Borel measure on $T$. If the intersection of two
tiles has nonempty interior, then one of the tiles is contained inside
the other. As a result we can without loss of generality restrict our
attention in the definition of $\cH_{\cD}^s$ to essentially disjoint
coverings.

\begin{prop}\cite[Proposition 3.5]{CT}\label{T-property-2}
For each $s>0$ there exists a constant $C=C(s)>0$ so that
$\cH^s(A) \leq \cH^s_\cD(A) \leq C \cH^s(A)$ for every $A \subset T$.
\end{prop}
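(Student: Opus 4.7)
The lower bound $\mathcal{H}^s(A) \le \mathcal{H}^s_{\cD}(A)$ is essentially immediate from the definitions: any essentially disjoint cover of $A$ by tiles $T_{w_i}\in\cD_*$ with $\diam T_{w_i}<\varepsilon$ is in particular a cover of $A$ by sets of diameter less than $\varepsilon$, so the infimum defining $\cH^s_{\varepsilon}$ is taken over a larger family than the infimum defining $\cH^s_{\cD,\varepsilon}$. Sending $\varepsilon\to 0$ preserves this inequality.

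For the upper bound $\mathcal{H}^s_{\cD}(A)\le C\,\mathcal{H}^s(A)$ the plan is to replace an arbitrary small-diameter cover by a comparably efficient cover by dyadic tiles. Given a cover $\{U_i\}$ of $A$ with $\delta_i := \diam U_i < \varepsilon$, for each $i$ choose the unique integer $m_i\ge 0$ with
\[
2^{-m_i}\Rinn \le \delta_i < 2^{-m_i+1}\Rinn,
\]
and let $\mathcal{F}_i$ denote the (essentially disjoint) collection of tiles in $\cD_{m_i}$ which intersect $U_i$. Then $\mathcal{F}_i$ covers $U_i$, each $T_w\in\mathcal{F}_i$ satisfies
\[
\diam T_w \le 2\Rout_w = 2^{-m_i+1}\Rout \;\le\; \tfrac{2\Rout}{\Rinn}\,\delta_i,
\]
so every tile produced has diameter comparable to $\delta_i$. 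Taking the union over $i$ gives a cover of $A$ by dyadic tiles, each of diameter at most $C_0\varepsilon$ with $C_0=2\Rout/\Rinn$.

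The key step is bounding the cardinality of $\mathcal{F}_i$ by a constant $N$ independent of $i$. I would fix any $x_0\in U_i$ and observe that any $T_w\in\mathcal{F}_i$ lies in the CC-ball $B(x_0,\delta_i+2^{-m_i+1}\Rout)\subset B(x_0,R_0\cdot 2^{-m_i})$ for some structural constant $R_0$. On the other hand, the inclusions $B(p_w,\Rinn_w)\subset T_w\subset B(p_w,\Rout_w)$ together with the essential disjointness of tiles of the same generation imply that the inner balls $\{B(p_w,2^{-m_i}\Rinn):T_w\in\mathcal{F}_i\}$ are pairwise disjoint and all contained in $B(x_0,(R_0+\Rout)\cdot 2^{-m_i})$. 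Since Haar measure on $\G$ is $Q$-homogeneous under dilations, a volume comparison yields
\[
\#\mathcal{F}_i \cdot |B(0,\Rinn)|\,2^{-m_iQ} \;\le\; |B(0,R_0+\Rout)|\,2^{-m_iQ},
\]
so $\#\mathcal{F}_i\le N$ with $N$ depending only on $\G$ and $T$.

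Combining the two estimates gives, for every $\varepsilon>0$,
\[
\cH^s_{\cD,C_0\varepsilon}(A) \;\le\; \sum_i \sum_{T_w\in\mathcal{F}_i}(\diam T_w)^s \;\le\; N\,(2\Rout/\Rinn)^s \sum_i \delta_i^s,
\]
and taking the infimum over covers $\{U_i\}$ yields $\cH^s_{\cD,C_0\varepsilon}(A)\le C\,\cH^s_{\varepsilon}(A)$. Letting $\varepsilon\to 0$ finishes the proof. The one point that requires care is the uniformity of the packing constant $N$ across scales; this is exactly where the scaling property of Haar measure together with the \emph{fixed} ratio $\Rout/\Rinn$ inherent to the self-similar tile $T$ is used.
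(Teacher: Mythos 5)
Your proposal is correct and follows essentially the same route as the paper's source: the lower bound is definitional, and the upper bound replaces each set of a small-diameter cover by the boundedly many comparably-sized tiles meeting it, where your volume-packing count of $\#\mathcal{F}_i$ via the disjoint inner balls and the $Q$-homogeneity of Haar measure is precisely the content of Lemma \ref{doubling_spaces}, which the paper identifies as the key ingredient. The only (harmless) imprecisions are that the tiles of $\cD_{m_i}$ cover $U_i\cap T$ rather than $U_i$ (which suffices since $A\subset T$) and the degenerate cases $\diam U_i=0$ or $\diam U_i\gtrsim \Rinn$, which disappear as $\varepsilon\to 0$.
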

The following lemma is actually used to prove Proposition \ref{T-property-2}, but we will use it later.
\begin{lem}\label{doubling_spaces}
There exists a constant $N > 0$ so that for any ball $B(q,r)$ with $r \leq R^o$ and $m \geq 0$ chosen so that $2^{-m-1} R^o \leq r < 2^{-m} R^o$, it holds that the number of tiles $T_w \in \cD_m$ which intersect $B(q,r)$ is at most $N$.
\end{lem}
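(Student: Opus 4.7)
The plan is to run a standard volume-packing argument against the $Q$-Ahlfors regularity of Haar measure on $\G$. The idea is that each tile $T_w$ at level $m$ contains an ``inner ball'' $B(p_w, \Rinn_w)$ of radius $2^{-m} \Rinn$, and if several such tiles all meet $B(q,r)$, the corresponding disjoint inner balls are forced to sit inside a slightly enlarged ball around $q$; comparing volumes then bounds the count by a dimensional constant.

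First I would recall that $|B(x,s)| = s^Q |B(0,1)|$ for every $s > 0$ and $x \in \G$, which follows from left-invariance of Haar measure together with the scaling $|\delta_s(A)| = s^Q |A|$. Next, given distinct tiles $T_{w_1}, \ldots, T_{w_k}$ in $\cD_m$ meeting $B(q,r)$, I would pick $x_j \in T_{w_j} \cap B(q,r)$; since $T_{w_j} \subset B(p_{w_j}, 2^{-m} \Rout)$ and $r < 2^{-m} \Rout$, the triangle inequality gives
\[ d(p_{w_j}, q) \leq d(p_{w_j}, x_j) + d(x_j, q) < 2 \cdot 2^{-m} \Rout, \]
and hence (using $\Rinn < \Rout$) the inclusion $B(p_{w_j}, 2^{-m} \Rinn) \subset B(q, 3 \cdot 2^{-m} \Rout)$.

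The step I would handle with slight care is the disjointness of the inner balls. Each $B(p_{w_j}, 2^{-m} \Rinn)$ is open and contained in $T_{w_j}$, so it lies in the topological interior of $T_{w_j}$; essential disjointness of the level-$m$ tiles (which propagates from the base case through the self-similar construction) forces the intersection of the interiors of distinct tiles to be an open set of Haar measure zero, hence empty by full support of Haar measure. Summing volumes of the $k$ pairwise disjoint inner balls against the volume of $B(q, 3 \cdot 2^{-m} \Rout)$ then yields
\[ k \, (2^{-m} \Rinn)^Q |B(0,1)| \leq (3 \cdot 2^{-m} \Rout)^Q |B(0,1)|, \]
so $k \leq (3 \Rout / \Rinn)^Q$, which supplies the desired $N$ depending only on $\G$ and the fundamental tile $T$. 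The only mild obstacle is upgrading essential disjointness of tiles to genuine disjointness of the open inner balls; the rest is a routine doubling computation.
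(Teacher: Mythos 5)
Your volume-packing argument is correct, and it is essentially the approach the paper relies on: the paper gives no proof of its own but defers to \cite[Lemma 3.6]{CT}, whose argument is exactly this doubling/measure-comparison of disjoint inner balls inside an enlarged ball. Your handling of the upgrade from essential disjointness of tiles to genuine disjointness of the open inner balls (via full support of Haar measure) is a valid way to close that step.
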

The above lemma was originally stated for $r \leq 1$ in \cite[Lemma 3.6]{CT}, but the same proof works when we assume $r \leq R^o$. The following is a generalization of the Harvey-Polking lemma in $\G$. The lemma uses the notation of (\ref{multi-index-notation}).
\begin{lem}\cite[Lemma 3.8]{CT}\label{T-property-7}
Let $\{T_{w_i}:1\leq i\leq N\}$ be a finite collection of essentially
disjoint tiles, with $T_{w_i} \in \cD_{m(i)}$. For each $i$ there is a
function $\varphi_i \in C^\infty_0(\G)$, supported in
$B(p_{w_i},2\Rout_{w_i})$, so that
$$
\sum_{i=1}^N \varphi_i(q) = 1 \quad \mbox{for all $q \in
  \bigcup_{i=1}^N T_{w_i}$.}
$$
Moreover, for each multi-index $\alpha$ there exists a constant
$C_\alpha>0$ so that
$$
|X_\alpha \varphi_i(x)| \le C_\alpha 2^{m(i)|\alpha|} \quad
\mbox{for all $x \in \G$ and $1\leq i\leq N$.}
$$
\end{lem}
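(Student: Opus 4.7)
The plan is to adapt the classical Harvey-Polking construction of \cite{HP} to Carnot groups via a telescoping product of rescaled bump functions, with the ordering of the tiles chosen so that horizontal derivatives of the large-scale factors can be absorbed into the scaling of the small-scale ones.

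First I would construct a model bump $\psi_i$ adapted to each tile $T_{w_i}$. Fix once and for all $\psi_0 \in C_0^\infty(\G)$ with $\psi_0 \equiv 1$ on $\overline{B(p, \Rout)}$ and $\supp \psi_0 \subset B(p, (1+\varepsilon)\Rout)$ for some fixed $\varepsilon \in (0,1)$; such a $\psi_0$ exists because $\G$ is diffeomorphic to $\R^N$ via $\exp$. Each $\tfrac{1}{2}$-homothety $f_j$ takes the form $f_j(x) = b_j \cdot \delta_{1/2}(x)$ for some $b_j \in \G$, and since $\delta_t$ is a group homomorphism, induction gives $f_w(x) = b_w \cdot \delta_{2^{-m(w)}}(x)$ for a suitable $b_w \in \G$. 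Setting $\psi_i := \psi_0 \circ f_{w_i}^{-1}$ yields $\psi_i \equiv 1$ on $T_{w_i}$ and $\supp \psi_i \subset B(p_{w_i}, (1+\varepsilon)\Rout_{w_i}) \subset B(p_{w_i}, 2\Rout_{w_i})$. For derivatives, I combine left-invariance of each $X_j \in \mathcal{X}$ with the identity $X_j(g \circ \delta_t) = t \cdot (X_j g) \circ \delta_t$, which holds because $X_j \in \vf_1$ is $\delta_t$-homogeneous of degree one; iteration gives
$$X_\alpha \psi_i(x) = 2^{m(i)|\alpha|} (X_\alpha \psi_0)(f_{w_i}^{-1}(x)),$$
so $|X_\alpha \psi_i| \leq C_\alpha 2^{m(i)|\alpha|}$ with $C_\alpha := \|X_\alpha \psi_0\|_\infty$.

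Next, relabel the tiles so that $m(1) \leq m(2) \leq \cdots \leq m(N)$ (largest tiles first) and define
$$\varphi_i := \psi_i \prod_{j<i}(1-\psi_j),$$
with the empty product equal to $1$. A short induction on $N$ produces the telescoping identity $\sum_{i=1}^N \varphi_i = 1 - \prod_{i=1}^N(1-\psi_i)$, which equals $1$ on $\bigcup_i T_{w_i}$ because any $q$ in the union lies in some $T_{w_k}$ with $\psi_k(q)=1$, forcing a zero factor in the product. The support of $\varphi_i$ is contained in $\supp \psi_i \subset B(p_{w_i}, 2\Rout_{w_i})$, as required.

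Finally, for the derivative estimate I would iterate the product rule (\ref{product_rule}) on the $i$-fold product defining $\varphi_i$, expressing $X_\alpha \varphi_i(x)$ as a finite sum of terms of the form $(X_{\beta_0}\psi_i)(x) \prod_{j<i} X_{\beta_j}(1-\psi_j)(x)$ with $|\beta_0|+\sum_j |\beta_j|=|\alpha|$. A factor with $|\beta_j|\geq 1$ equals $-X_{\beta_j}\psi_j(x)$ and vanishes whenever $x \notin \supp \psi_j$, so only indices in $S_x := \{j < i : x \in \supp \psi_j\}$ can legitimately receive derivatives. Each nonzero term is therefore bounded by
$$C_\alpha \cdot 2^{m(i)|\beta_0|}\prod_{j \in S_x} 2^{m(j)|\beta_j|} \leq C_\alpha \cdot 2^{m(i)|\alpha|},$$
using $m(j)\leq m(i)$ from the ordering; reversing the ordering would produce uncontrollable $2^{m(j)|\beta_j|}$ terms with $m(j)>m(i)$, so the ordering by tile size is essential. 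The main technical obstacle is showing that $|S_x|$ is bounded by a constant depending only on $\G$ and $\varepsilon$: for each generation $m$, Lemma \ref{doubling_spaces} bounds the number of tiles at generation $m$ whose $(1+\varepsilon)$-expanded balls contain $x$ by the doubling constant $N$, while the diameter $\sim 2^{-m}$ of these supports means only finitely many generations can contribute to a given $x$. Consequently the total number of nonzero terms is at most $(|S_x|+1)^{|\alpha|}$, a constant depending only on $\alpha$ and $\G$, which yields the claimed bound $|X_\alpha \varphi_i(x)| \leq C_\alpha 2^{m(i)|\alpha|}$.
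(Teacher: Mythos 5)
First, a point of comparison: the paper does not prove this lemma at all --- it is quoted verbatim from \cite[Lemma 3.8]{CT}, and Section \ref{section-3} explicitly directs the reader there for the proofs. Your construction --- rescaled bumps $\psi_i=\psi_0\circ f_{w_i}^{-1}$ with derivative bounds coming from left-invariance and the degree-one homogeneity of the horizontal fields, ordering of the tiles by size, and the telescoping product $\varphi_i=\psi_i\prod_{j<i}(1-\psi_j)$ --- is exactly the Harvey--Polking scheme that \cite{CT} adapts, and everything up to the final derivative estimate (supports, the partition identity, the per-term bound using $m(j)\le m(i)$, the reduction to counting nonzero terms) is correct.

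The genuine gap is the bounded-overlap step. You correctly isolate that the estimate reduces to $|S_x|\le C(\G)$, where $S_x=\{j<i: x\in\supp\psi_j\}$, but the justification offered --- the single-generation count from Lemma \ref{doubling_spaces} plus ``the diameter $\sim 2^{-m}$ of these supports means only finitely many generations can contribute to a given $x$'' --- does not work. Small diameter does not prevent a support from containing $x$: for \emph{every} generation $m$ there are tiles whose expanded balls contain $x$ (for instance the nested tower of tiles whose closures contain $x$), so every generation contributes in the sense you invoke. What must actually be shown is that among the \emph{pairwise essentially disjoint} tiles of the given collection only boundedly many have $x$ in their expanded support, and your argument never uses essential disjointness at this point. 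Nor is this a formality that can be waved through: the claim is quantitatively sensitive to the expansion factor and to the ratio $\Rout/\Rinn$. In $\R^2$ the essentially disjoint dyadic squares $Q_k=[2^{-k},2^{-k+1}]\times[0,2^{-k}]$, $k=1,\dots,M$, occupy $M$ distinct generations and each comes within one diameter of the origin; if the expanded support of a tile were permitted to reach out by a factor slightly larger than $2$ (concretely, $\sqrt5$ times the outer radius here), the overlap at $0$ would equal $M$ and the derivative bound would fail. So a correct proof must exploit both essential disjointness and the specific expansion $B(p_{w_j},2\Rout_{w_j})$ (or your $(1+\varepsilon)\Rout_{w_j}$). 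One standard way to close the gap: if $x\in B(p_{w_j},2\Rout_{w_j})$ then the generation-$m$ ancestor of $T_{w_j}$ lies within $2\Rout 2^{-m}$ of $x$ for every $m\le m(j)$, so all tiles of $S_x$ sit inside a subtree with at most $N$ vertices per level by Lemma \ref{doubling_spaces}, and one then bounds the size of the antichain that an essentially disjoint family forms in this subtree; this last count is where the real geometric work lies and it is missing from your write-up.
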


By Proposition \ref{T-property-2}, we already know our new dyadic Hausdorff measure and the original Hausdorff measure are comparable, however we will also need to show that their Hausdorff contents are comparable.
\begin{lem}\label{comp_of_measures}
For a compact $K \subset T$ we have, $$\mathcal{H}_{\mathcal{D}, \infty}^s(K) \sim \mathcal{H}_{\infty}^s(K)$$
\end{lem}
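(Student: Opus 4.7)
The easy direction, $\h^s(K) \leq \mathcal{H}_{\mathcal{D},\infty}^s(K)$, is immediate: any covering of $K$ by tiles $T_{w_i} \in \mathcal{D}_*$ is admissible in the definition of $\h^s(K)$ with the same value $\sum_i (\diam T_{w_i})^s$, and taking the infimum over dyadic covers gives the bound. The plan for the reverse inequality is to convert an arbitrary efficient covering of $K$ into a dyadic one of comparable total size by invoking Lemma \ref{doubling_spaces}.

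Fix $\varepsilon > 0$ and pick a cover $\{E_i\}$ of $K$ with $\sum_i (\diam E_i)^s \leq \h^s(K) + \varepsilon$, discarding those $E_i$ disjoint from $K$. For each remaining $E_i$ choose $q_i \in E_i \cap K \subset T$ and set $r_i := \diam E_i$, so that $E_i \subset B(q_i, 2r_i)$. If $2r_i \geq \Rout$, I would cover $K \cap E_i \subset K \subset T$ by the single tile $T \in \mathcal{D}_0$, which satisfies $\diam T \leq 2\Rout \leq 4 r_i$. If instead $2r_i < \Rout$, pick the unique $m(i) \geq 0$ with $2^{-m(i)-1}\Rout \leq 2r_i < 2^{-m(i)}\Rout$. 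By Lemma \ref{doubling_spaces}, at most $N$ tiles of $\mathcal{D}_{m(i)}$ meet $B(q_i, 2r_i)$; since the tiles of $\mathcal{D}_{m(i)}$ essentially disjointly cover $T$, every point of $T \cap B(q_i, 2r_i)$ lies in some such tile, so these at most $N$ tiles cover $K \cap E_i$. Each such tile $T_w$ has $\diam T_w \leq 2\Rout_w = 2^{1-m(i)}\Rout \leq 8 r_i$.

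In either case, $K \cap E_i$ admits a dyadic cover $\{T_{w_{i,j}}\}_j$ with $\sum_j (\diam T_{w_{i,j}})^s \leq C r_i^s$ for a constant $C = C(s, N, \Rout)$. Collecting these over $i$ yields a dyadic cover of $K$, so $\mathcal{H}_{\mathcal{D},\infty}^s(K) \leq C\sum_i r_i^s \leq C(\h^s(K) + \varepsilon)$; sending $\varepsilon \to 0$ finishes the proof. The only subtlety is calibrating the dyadic scale $m(i)$ to the cover-element scale $r_i$ so that both the doubling bound of Lemma \ref{doubling_spaces} and the diameter comparability $\diam T_{w_{i,j}} \sim r_i$ hold simultaneously, which is precisely what the choice $2^{-m(i)-1}\Rout \leq 2r_i < 2^{-m(i)}\Rout$ accomplishes.
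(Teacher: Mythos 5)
Your proof is correct and follows essentially the same route as the paper: the easy inequality is noted to be trivial, and the reverse direction converts an efficient cover into a dyadic one via Lemma \ref{doubling_spaces}, with the same scale calibration $2^{-m(i)-1}\Rout \leq 2r_i < 2^{-m(i)}\Rout$ and the same fallback to the single fundamental tile $T$ when the radius exceeds $\Rout$. The only cosmetic difference is that the paper first passes to spherical Hausdorff content (covers by balls), whereas you enclose arbitrary cover elements $E_i$ in balls $B(q_i, 2r_i)$ directly; these are interchangeable.
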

\begin{proof}
In all cases, $\mathcal{H}_{\infty}^s(E) \leq \mathcal{H}_{\mathcal{D},\infty}^s(E)$ for any set $E \subset T$. So it suffices to show the other inequality. Let $\mathcal{S}_{\infty}^s$ denote the $s$-dimensional spherical Hausdorff content (with respect to the CC-distance). Then it suffices to show  $\mathcal{H}_{\mathcal{D},\infty}^s(K) \lesssim \mathcal{S}_{\infty}^s(K)$. Let $\varepsilon >0 $ be given and find a covering of $K$ by balls $\{ B(x_i, r_i)\}_{i=1}^M$ with $$\sum_i (2r_i)^s \leq \mathcal{S}_{\infty}^s(K) + \varepsilon.$$ First assume $r_i \leq R^o$ for all $i$. The other case is trivial and we show it at the end of the proof. By Lemma \ref{doubling_spaces} we can cover $B(x_i , r_i) \cap T$ by tiles $\{ T_{i,j} : 1\leq j \leq N\}$ where $N$ is independent of $i$ and $\diam T_{i,j} \sim 2r_i$. The claim that $N$ is independent from $i$ from follows $\G$ being a doubling metric space. See the proof of \cite[Lemma 3.6]{CT}. Since $K \subset T$, $K$ is covered by the tiles $\{ T_{i,j}\}_{i,j}$ with $\diam T_{i,j} \leq 2Cr_i$ for some fixed $C$. Let $r = \max_i r_i$. Then since $\diam T_{i,j} \leq 2Cr$ for all $i,j$, \begin{align*}
    \mathcal{H}_{\mathcal{D},\infty}^s(K) \leq \mathcal{H}_{\mathcal{D},C2r}^s(K) &\leq \sum_{i,j} (\diam T_{i,j})^s \\ &\leq C N \sum_{i}(2r_i)^s \\ &\leq C N(\mathcal{H}_{\infty}^s(K) +\varepsilon)
\end{align*}
and we're done in the case $r_i \leq R^o$ for all $i$. 

Now assume there exists $i$ such that $r_i \geq R^o$. Then obviously $T$ is a cover $K$ as we already assumed $K \subset T$. So since $R^o \leq \diam T \leq 2R^o$ we get, $$\mathcal{H}_{\mathcal{D}, \infty}^s(K) \leq (\diam T)^s \leq \sum_i (2r_i)^s \leq \mathcal{S}_{\infty}^s(K) + 
\varepsilon$$ which completes the proof.
\end{proof}

\vskip2em

\section{Capacity and Removability for Campanato Spaces}\label{section-4}
For Sections \ref{section-4} and \ref{section-6} we can without loss of generality assume that any compact $K$ we are working with satisfies $K \subset T$ by dilating and translating $K$.

Define the Campanato space with exponent $\varrho$ in $\Omega$, denoted $L_C^{\varrho}(\Omega)$, as functions $f \in L_{\text{loc}}^1(\Omega)$ satisfying, \begin{equation}\label{camp_def_1}
    \sup_{B(x,r) \subset \Omega} \frac{1}{r^{\varrho}} \int_{B(x,r)} \left| f(y) - f_{B(z,r)} \right|\, dy < \infty
\end{equation}
or 
\begin{equation}\label{camp_def_2}
\sup_{B(x,r)\subset \Omega} \inf_{c \in \mathbb{R}} \frac{1}{r^{\varrho}} \int_{B(x,r)} \left| f(y) - c \right|\, dy < \infty
\end{equation}
where $f_{B(z,r)} = r^{-Q} \int_{B(z,r)} f(x)\,dx$. 
For $f \in L_C^{\varrho}(\Omega)$, denote the quantity \ref{camp_def_1} as $\| f\|_*$. Furthermore, when $\Omega = \G$ we set $L_C^{\varrho}(\G) = L_C^{\varrho}.$

\begin{rem}
Notice that when $\varrho = Q$ and $\Omega = \G$, we get the space of functions of bounded mean oscillation (BMO). Hence, the definition of the Campanato space we have above is a slight generalization of BMO. 
\end{rem}

Now fix a differential operator $\L$. The following definition is basically given by Harvey and Polking in \cite[Definition 1.1]{HP2}.
\begin{defn}[Campanato Capacity]\label{camp-cap}
 The capacity of a compact $K \subset \G$ with respect to $L_C^{\varrho}$ and $\L$ is defined as, \begin{align*}
\k^{\varrho}(K) :=  \sup \left\{ |\langle \L f, 1 \rangle |: f \in L_C^{\varrho}, \| f\|_{*} \leq 1, \supp(\L f)\subset K \right\}.    
 \end{align*}  
 Above, $\supp(\L f)$ refers to the distributional support of $\L f$.
\end{defn}

The main result for this section we are after is the following:
\begin{thm}\label{main_result}
Let $\L$ be a left-invariant differential operator homogeneous of degree $\l \in [1,Q)$ such that both $\L$ and $\L^t$ are hypoelliptic. For a compact set $K \subset T$ and $\varrho \in [\l , Q]$ we have, $$\h^{\varrho-\lambda}(K) \lesssim \k^{\varrho}(K) \lesssim \h^{\varrho-\l}(K).$$    
\end{thm}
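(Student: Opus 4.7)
The plan is to prove the two inequalities separately, following a dual strategy in the spirit of Verdera: the upper bound by covering $K$ with dyadic tiles and testing against the Harvey-Polking partition of unity, and the lower bound by convolving a Frostman-type measure against the fundamental solution.

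For $\k^{\varrho}(K) \lesssim \h^{\varrho-\l}(K)$, I would fix an admissible $f$ with $\|f\|_* \leq 1$ and an essentially disjoint covering of $K$ by tiles $\{T_{w_i}\}$. Applying Lemma \ref{T-property-7}, let $\{\varphi_i\}$ be the induced partition of unity, with $\varphi_i$ supported in $B_i := B(p_{w_i}, 2R^o_{w_i})$ and $\sum_i \varphi_i \equiv 1$ on $\bigcup_i T_{w_i} \supset K$. Since $\supp(\L f) \subset K$, one has $\langle \L f, 1\rangle = \sum_i \langle f, \L^t \varphi_i\rangle$. The left-invariance of $\L$ together with its positive degree forces $\L 1 = 0$, hence $\int \L^t \varphi_i \,dy = \langle \varphi_i, \L 1\rangle = 0$, so one may subtract the mean $c_i := f_{B_i}$ inside each integral. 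The derivative bound $\|\L^t \varphi_i\|_\infty \lesssim (R^o_{w_i})^{-\l}$ (from Lemma \ref{T-property-7} and the degree $\l$ of $\L$) combined with the Campanato estimate $\int_{B_i} |f-c_i|\,dy \lesssim (R^o_{w_i})^{\varrho}\|f\|_*$ gives $|\langle f, \L^t \varphi_i\rangle| \lesssim (\diam T_{w_i})^{\varrho-\l}$. Summing and passing to the infimum over all covers controls $\k^{\varrho}(K)$ by $\mathcal{H}_{\mathcal{D},\infty}^{\varrho-\l}(K) \sim \h^{\varrho-\l}(K)$, via Lemma \ref{comp_of_measures}.

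For the lower bound, assume $\h^{\varrho-\l}(K) > 0$ and extract, by a Frostman-type argument, a nonnegative Borel measure $\mu$ supported in $K$ with $\mu(B(x,r)) \leq r^{\varrho-\l}$ for all $x,r$ and $\mu(K) \gtrsim \h^{\varrho-\l}(K)$. Set $f = \mu \ast \cg$, where $\cg$ is the fundamental solution from Theorem \ref{folland-fund-soln}. Left-invariance of $\L$ gives $\L f = \mu \ast \L \cg = \mu$ distributionally, hence $\supp(\L f) \subset K$ and $\langle \L f, 1\rangle = \mu(K) \gtrsim \h^{\varrho-\l}(K)$; it then remains to prove $\|f\|_* \lesssim 1$, which identifies a rescaling of $f$ as admissible in the capacity. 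To do this, fix $B = B(x,r)$, split $\mu = \mu_1 + \mu_2$ with $\mu_1$ the restriction of $\mu$ to $B(x,2r)$, write $f = f_1 + f_2$ with $f_i = \mu_i \ast \cg$, and choose the constant $c = f_2(x)$, so that $\int_B |f-c|\,dy \leq \int_B |f_1|\,dy + \int_B |f_2 - f_2(x)|\,dy$. For the near part, Fubini together with the local bound $\int_{B(z,R)} \|w\|^{\l-Q}\,dw \lesssim R^{\l}$ and $\mu_1(\G) \lesssim r^{\varrho-\l}$ yields $\int_B |f_1|\,dy \lesssim r^{\varrho}$. For the far part, apply the stratified mean value theorem to $z \mapsto \cg((y')^{-1}\cdot z)$, using the pointwise estimate $|X\cg(p)| \lesssim \|p\|^{\l-1-Q}$ (Remark \ref{deriv-of-fund-soln} and Proposition \ref{hom-func-bound}), to obtain $|\cg((y')^{-1}\cdot y) - \cg((y')^{-1}\cdot x)| \lesssim r \|(y')^{-1}\cdot x\|^{\l-1-Q}$ for $y \in B$ and $y' \notin B(x,2r)$. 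A dyadic annular decomposition of $\G \setminus B(x,2r)$ around $x$, coupled with the Frostman growth, then gives $|f_2(y) - f_2(x)| \lesssim r^{\varrho-Q}$, and consequently $\int_B |f_2 - f_2(x)|\,dy \lesssim r^{\varrho}$. Combining both estimates yields $\inf_c \int_B |f-c|\,dy \lesssim r^{\varrho}$, i.e. $\|f\|_* \lesssim 1$.

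The hard part will be the far-part estimate for $f_2$: one must ensure that the mean value comparison for $\cg$ holds uniformly across all far scales $\|(y')^{-1}\cdot x\| > 2r$, which requires verifying that the entire subunit path from $x$ to $y$ stays in the region where $\cg$ is smooth (i.e., avoids the pole at $y'$), and that the resulting geometric series $\sum_{k \geq 1} 2^{k(\varrho - 1 - Q)}$ produced by the annular decomposition converges --- precisely where the hypothesis $\varrho \leq Q$ is used. The near-part estimate, by contrast, is where the standing assumption $\l \in [1,Q)$ is essential, guaranteeing local integrability of $\cg$.
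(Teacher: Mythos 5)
Your proposal is correct and follows essentially the same route as the paper: the lower bound via a Frostman measure $\mu$ on $K$, the potential $f=\mu\ast\cg$, and the near/far splitting of $\mu$ at scale $2r$ to verify $\|f\|_*\lesssim 1$ (the paper packages the increment estimate $|\cg(X\cdot Y)-\cg(X)|\lesssim\|Y\|\,\|X\|^{\l-Q-1}$ for $\|Y\|\le\|X\|/2$ as a cited proposition rather than re-deriving it from the stratified mean value theorem, but this is the same estimate and it already builds in your concern about staying away from the pole); and the upper bound via an almost-optimal tile cover and the Harvey--Polking partition of unity. The one genuine difference is in the mechanics of the upper bound: you pair $\L f$ directly against $\sum_i\varphi_i$ and subtract the mean $c_i$ using $\int\L^t\varphi_i=\langle\varphi_i,\L 1\rangle=0$ (valid, since by Poincar\'e--Birkhoff--Witt $\L$ is a combination of words of length $\l\ge 1$ in horizontal fields and so annihilates constants), whereas the paper inserts an extra cutoff $\psi\equiv 1$ near $K$, writes $\langle\L(f-c_j),\psi\varphi_j\rangle$, and expands $X_\a(\psi\varphi_j)$ by the product rule, which generates the family of higher-order terms indexed by $\beta$ that must then be absorbed using the uniform bound $\diam T_j\le J$. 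Your version avoids that bookkeeping entirely and is the cleaner of the two; the only point to make explicit is that the partition sums to $1$ on a neighborhood of $\supp(\L f)$ so that $\langle\L f,1\rangle=\sum_i\langle\L f,\varphi_i\rangle$ is legitimate, a point the paper's argument relies on in exactly the same way.
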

Before giving the proof, we will show that a certain function is in $L_C^{\varrho}$. For the rest of this section, we will take $\L$ to be a left-invariant differential operator homogeneous of degree $\l \in [1, Q)$. Since $\L$ and $\L^t$ are hypoelliptic, by Theorem \ref{folland-fund-soln}, there exists a kernel of type $\l$, denoted $\cg$, such that $\cg$ is  homogeneous of degree $\l-Q$ on $\G \backslash \{0\}$. Moreover, $\cg$ is a fundamental solution to $\L$. The proof below is essentially the same as one of the directions in \cite[Theorem 4.4]{CT}.

\begin{lem}\label{conv_is_in_L_C}
Fix a compact set $K$ and $\varrho \in [\l , Q]$. Let $\mu$ be a finite measure supported on $K$ such that $\mu(B(x,r)) \lesssim r^{\varrho - \l}$ for any ball. Then, $$f(x) :=  \mu(x) \ast \cg =  \int \cg(y^{-1} \cdot x)\,d\mu(y) \in L_C^{\varrho}.$$
\end{lem}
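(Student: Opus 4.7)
The plan is to verify definition (\ref{camp_def_2}): for every ball $B(x,r)$, exhibit a constant $c = c(x,r)$ with $\int_{B(x,r)} |f(y) - c|\,dy \lesssim r^{\varrho}$. Decompose $\mu = \mu_1 + \mu_2$ where $\mu_1$ is the restriction of $\mu$ to a slightly enlarged ball $B(x, Cr)$, with $C$ a fixed constant large enough to accommodate the stratified mean value theorem (see the statement recalled inside the proof of Lemma \ref{first_func_is_lip}). Writing $f_i = \mu_i \ast \cg$, I would take $c = f_2(x)$, which is finite because $|\cg(z^{-1}\cdot x)| \lesssim \|z^{-1}\cdot x\|^{\l - Q}$ (Proposition \ref{hom-func-bound}) is bounded on the support of $\mu_2$ and $\mu$ is a finite measure.

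For the local piece $f_1$, Fubini combined with the change of variable $w = z^{-1}\cdot y$ (which sends $B(x,r)$ to $B(z^{-1}\cdot x, r)$ by left-invariance of $d$) and the decay $|\cg(w)| \lesssim \|w\|^{\l - Q}$ from Remark \ref{deriv-of-fund-soln} and Proposition \ref{hom-func-bound} give $\int_{B(x,r)} |\cg(z^{-1}\cdot y)|\,dy \lesssim r^{\l}$ for each $z \in B(x, Cr)$, by integrating $\|w\|^{\l-Q}$ over dyadic annuli inside $B(0,(C+1)r)$. Multiplying by $d\mu(z)$ and invoking the growth hypothesis $\mu(B(x, Cr)) \lesssim r^{\varrho - \l}$ yields $\int_{B(x,r)} |f_1(y)|\,dy \lesssim r^{\varrho}$.

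For the far piece, we may assume $K \not\subset B(x, Cr)$ (otherwise $f_2 \equiv 0$). For each $z \notin B(x, Cr)$, I would apply the stratified mean value theorem to $p \mapsto \cg(z^{-1}\cdot p)$, which is smooth on a neighborhood of the segment from $x$ to $y \in B(x,r)$ provided $C$ is large. Using left-invariance $X_i[\cg(z^{-1}\cdot p)] = (X_i\cg)(z^{-1}\cdot p)$, the derivative decay $|\nabla_{\G}\cg(q)| \lesssim \|q\|^{\l - Q - 1}$ (Remark \ref{deriv-of-fund-soln} applied to $\nabla_{\G}\cg$ together with Proposition \ref{hom-func-bound}), and the geometric fact that $\|z^{-1}\cdot x \cdot w\| \sim \|z^{-1}\cdot x\|$ on the small parameter ball from the MVT, one obtains
\begin{align*}
|\cg(z^{-1}\cdot y) - \cg(z^{-1}\cdot x)| \lesssim r\,\|z^{-1}\cdot x\|^{\l - Q - 1}.
\end{align*}
A dyadic annular decomposition $K \setminus B(x, Cr) \subset \bigcup_{k\ge 0} \bigl(B(x, 2^{k+1}Cr) \setminus B(x, 2^k Cr)\bigr)$ paired with $\mu(B(x, 2^k Cr)) \lesssim (2^k r)^{\varrho - \l}$ then gives
\begin{align*}
|f_2(y) - f_2(x)| \lesssim r \sum_{k \geq 0} (2^k r)^{\l - Q - 1}(2^k r)^{\varrho - \l} = r^{\varrho - Q}\sum_{k\ge 0} 2^{k(\varrho - Q - 1)} \lesssim r^{\varrho - Q},
\end{align*}
the series converging precisely because $\varrho \leq Q$ forces the exponent $\varrho - Q - 1 \leq -1$. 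Integrating this over $B(x,r)$ contributes $\lesssim r^{\varrho}$, and adding the $f_1$ estimate completes the argument.

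The main obstacle is the far-field modulus-of-continuity estimate: upgrading the pointwise decay of $\nabla_{\G}\cg$ to an honest increment bound on $\cg$ via the stratified MVT, while simultaneously keeping the parameter $z^{-1}\cdot x\cdot w$ uniformly away from the origin so that the derivative bound is usable; the endpoint condition $\varrho \leq Q$ enters cleanly at the very end to guarantee summability of the resulting dyadic series, which is exactly why the hypothesis $\varrho \in [\l, Q]$ appears so naturally.
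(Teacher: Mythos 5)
Your proposal is correct and follows essentially the same route as the paper's proof: the same near/far splitting of $\mu$ around the ball, the same choice of constant $c = \mu_2(x_0)\ast\cg$, the same dyadic-annulus estimates against the Frostman growth condition, and the same use of $\varrho \le Q$ for summability. The only difference is that you derive the increment bound $|\cg(z^{-1}\cdot y)-\cg(z^{-1}\cdot x)|\lesssim r\,\|z^{-1}\cdot x\|^{\l-Q-1}$ from the stratified mean value theorem, whereas the paper simply cites the homogeneous-kernel estimate \cite[Proposition 3.1.40]{FR}; both yield the identical key inequality.
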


\begin{proof}
Proposition \ref{hom-func-bound} says $|\cg(p)| \lesssim \| p\|^{\l - Q}$ for all $p \in \G \backslash \{0\}$. Fix a ball $B(x_0 ,r)$ and set $\mu_1 := \mu \restrict_{B(x_0 ,2r)}$ and $\mu_2 := \mu \restrict_{B(x_0, 2r)^c}$. By Fubini and Proposition \ref{hom-func-bound}, \begin{align*}
    \left| \int_{B(x_0, r)} \int \cg(y^{-1} \cdot x) \, d\mu_1(y) \,dx \right| &\lesssim \int_{B(x_0 ,2r)} \int_{B(x_0, r)} \| y^{-1} \cdot x\|^{\l - Q} \,dx \,d\mu_1(y) \\ &\leq \int_{B(x_0 ,2r)} \left( \sum_{j=0}^{\infty} \int_{2^{-j-1}3r \leq \| y^{-1} \cdot x\| < 2^{-j}3r} \| y^{-1} \cdot x\|^{\l - Q} \,dx \right) \,d\mu_1(y) \\ &\lesssim \int_{B(x_0 ,2r)} \left( \sum_{j=0}^{\infty} (2^{-j-1} r)^{\l - Q} \int_{\| y ^{-1} \cdot x\| < 2^{-j}3r} \,dx \right) \,d\mu_1(y) \\ &\sim \int_{B(x_0 ,2r)} \left( \sum_{j=0}^{\infty} 2^{-j \l} r^{\l} \right) \, d\mu_1(y) \\ &\lesssim r^{\varrho}.
\end{align*}
Hence, \begin{equation}\label{estimate_on_mu_1}
\int_{B(x_0,r)} \left|  \mu_1(x) \ast \cg \right|\,dx \lesssim r^{\varrho}.    
\end{equation}
For $x \in B(x_0,r)$, $$\left|  \mu_2(x) \ast \cg -  \mu_2(x_0) \ast \cg \right| \leq \int_{B(x_0 ,2r)^c} \left| \cg(y^{-1} \cdot x) - \cg(y^{-1} \cdot x_0) \right| \, d\mu(y).$$ Then since $\cg$ is homogeneous of degree $\l - Q$ on $\G \backslash \{ 0\}$, by \cite[Proposition 3.1.40]{FR}\begin{equation}\label{pw_fund_soln_estiamte}
 \left| \cg( X \cdot Y) - \cg(X)\right| \lesssim \| Y\| \|X\|^{\l - Q -1} \quad \text{ for all }\| Y\| \leq \|X\|/2.   
\end{equation}
Setting $X = y^{-1} \cdot x_0$ and $Y = x_0^{-1} \cdot x$ we have $\| Y\| \leq r \leq \| X\| / 2.$ So (\ref{pw_fund_soln_estiamte}) gives, \begin{align*}
\left| \cg(y^{-1} \cdot x) - \cg(y^{-1} \cdot x_0) \right| &\lesssim \| x_0^{-1} \cdot x\| \| y^{-1} \cdot x_0\|^{\l- Q -1} \\ &\leq r \| y^{-1} \cdot x_0\|^{\l - Q -1}.
\end{align*}
Therefore for all $x \in B(x_0, r)$, \begin{align*}
\left|  \mu_2(x) \ast \cg - \mu_2(x_0) \ast \cg \right| &\lesssim r \int_{B(x_0 , 2r)^c} \| y^{-1} \cdot x_0\|^{\l -Q - 1} \,d\mu(y) \\ &= r \sum_{j=0}^{\infty} \int_{2^j 2r \leq \| y^{-1} \cdot x\| < 2^{j+1}2r} \| y^{-1} \cdot x_0\|^{\l - Q-1}\,d\mu(y) \\ &\lesssim r \sum_{j=0}^{\infty} 2^{j(\l - Q -1)} r^{\l - Q - 1} \mu(B(x_0, 2^{j+1} 2r)) \\ &\lesssim r^{\varrho - Q}.
\end{align*}
so that, \begin{equation}\label{int_estimate_on_fund}
 \int_{B(x_0,r)} | \cg \ast \mu_2(x) - \cg \ast \mu_2(x_0)|\,dx \lesssim r^{\varrho}.   
\end{equation}
Combining (\ref{estimate_on_mu_1}) and (\ref{int_estimate_on_fund}) we get, \begin{align*}
    \int_{B(x_0,r)} |  \mu(x) \ast \cg -  \mu_2(x_0) \ast \cg| \,dx &\leq \int_{B(x_0 , r)} | \mu_1(x) \ast \cg|\,dx + \int_{B(x_0 ,r)} |  \mu_2(x)\ast \cg -  \mu_2(x_0) \ast \cg|\,dx \\ &\lesssim r^{\varrho}.
\end{align*}
In view of (\ref{camp_def_2}), we have $f \in L_C^{\varrho}$.
\end{proof}

\begin{proof}[Proof of Theorem \ref{main_result}]
 First we show the lower bound. The inequality is trivial if $\mathcal{H}^{\varrho - \l}(K) = 0$, so assume $\mathcal{H}^{\varrho - \l}(K) > 0$. By Frostman's lemma there exists a measure $\mu$ supported on $K$ such that $\mu(B(x,r)) \lesssim r^{\varrho - \l}$ for all $x \in \G$ and $r > 0$, and such that $\mathcal{H}_{\infty}^{\varrho - \l}(K) \lesssim \mu(K)$. By Lemma \ref{comp_of_measures} we also have $\mathcal{H}_{\mathcal{D},\infty}^{\varrho-\l}(K) \lesssim \mu(K)$. Define,  
 \begin{align*}
 f(x) &:=  \mu(x) \ast \cg = \int \cg( y^{-1} \cdot x) \, d\mu(y) = \int_K \cg(y^{-1} \cdot x) \, d\mu(y).
 \end{align*} 
 Since $y^{-1} \cdot x = \tau_{y^{-1}} (x)$ we have $\L f(x) = 0$ for all $x \in K^c$, which follows by left-invariance of $\L$. So in the sense of distributions, $\supp (\L f) \subset K$. Furthermore, by Lemma \ref{conv_is_in_L_C} we have $f \in L_C^{\varrho}$. Take $\varphi \in C_0^{\infty}$ such that $\varphi \equiv 1$ in a neighborhood of $K$. Using $\L f \equiv 0$ on $K^c$, we can use Fubini to say, \begin{align}
    \k^{\varrho}(K) \geq  \left| \langle \L f, 1 \rangle \right|= | \langle \L f, \varphi \rangle | &= \left| \int \int \cg(y^{-1} \cdot x)\L^t\varphi(x) \, d\mu(y) \,dx \right| \nonumber \\ &= \left| \int \varphi(y) \,d\mu(y) \right| \nonumber \\ &\gtrsim \mathcal{H}_{\mathcal{D},\infty}^{\varrho -\l}(K) \nonumber
\end{align}
where we used $\cg$ being the fundamental solution of $\L$ to say $\int \cg(y^{-1}\cdot x)\L^t \varphi(x)\,dx = \varphi(y)$. This shows the lower bound. 

The upper bound is trivial if $\mathcal{H}_{\infty}^{\varrho -\l}(K) = \infty$, so assume $\mathcal{H}_{\infty}^{\varrho -\l}(K) < \infty$. Lemma \ref{comp_of_measures} says $\mathcal{H}_{\mathcal{D},\infty}^{\varrho -\l}(K) < \infty$ as well. Let $\varepsilon > 0$. By compactness we can find a finite collection of tiles $\{T_j\}_{j=1}^N$ such that 
$$K \subset \bigcup_{j=1}^N T_j \quad \text{ and } \quad \sum_{j=1}^N (\diam T_j)^{\varrho - \l} \leq \mathcal{H}_{\mathcal{D},\infty}^{\varrho-\l}(K) + \varepsilon.$$ 
We can also find balls $\{ B(p_j , 2R_j)\}_{j=1}^N$ such that $R_j \leq \diam T_j < 2 R_j$. By Lemma \ref{T-property-7} there exists function $\{ \varphi_j\}_{j=1}^N$ such that $\supp \varphi_j \subset B(p_j , 2R_j)$, $\sum_{j=1}^N \varphi_j \equiv 1$ on $\bigcup_j T_j$, and $\| X_{\a} \varphi_j \|_{\infty} \lesssim (\diam T_j)^{-|\a|}$ for $\a$ and $j$. Find $f \in L_C^{\varrho}$ such that $\supp (\L f) \subset K$ and \begin{equation}\label{upper_bound_on_cap} \k^{\varrho}(K) \leq |\langle \L f, 1 \rangle | + \varepsilon.\end{equation}
The Poincar\'{e}-Birkhoff-Witt Theorem says $\L$ is a linear combination of differential operators $X_{\a_{\ell}}$ with $|\a_{\ell}| = \l$. So we can assume without loss of generality that $\L = X_{\a}$ for $|\a| = \l$. Then $\L^t = (-1)^{\l} X_{\a}$. Recall the notation $X_{\a}^{\beta}$ as in (\ref{notation_for_X_B}) and the product formula (\ref{product_rule}) for $X_{\a}$. Set $c_j = f_{B(p_j , 2R_j)}$.
Then for $\psi \in C_0^{\infty}$ such that $\psi \equiv 1$ in a neighborhood of $K$ and using (\ref{length-of-beta-index}) , we can then write
\begin{align}
    | \langle \L f, 1 \rangle | &= |\langle \L f, \psi \rangle | \nonumber \\ &= \left| \langle \L f, \psi \sum_{j=1}^N \varphi_j \rangle \right| \nonumber \\ &= \left| \sum_{j=1}^N \langle \L (f-c_j) , \psi \varphi_j \rangle \right| \nonumber \\ &= \left| (-1)^{\l} \sum_{j=1}^N \langle f - c_j , \sum_{\beta \in \mathcal{B}} X_{\a}^{\beta}(\varphi_j) \star{X}_{\a}^{\beta}(\psi) \rangle \right| \nonumber \\ &\leq \sum_{j=1}^N |\langle f -c_j ,\psi X_{\a} \varphi_j \rangle| + \sum_{j=1}^N \left| \langle f -c_j , \sum_{\beta \in \mathcal{B}, \beta \neq \overline{\beta}} X_{\a}^{\beta}(\varphi_j)\star{X}_{\a}^{\beta}(\psi) \rangle \right| \nonumber \\ &\lesssim \sum_{j=1}^N \int_{B(p_j , 2R_j)} |f(y) - c_j| \| X_{\a} \varphi_j\|_{\infty}\,dy \nonumber \\ &\,\,\,\,\,\,\,\,\,\,\,\,\,\,\,\,\,\,\,\,\,\, + \sum_{j=1}^N \sum_{\beta \in \mathcal{B}, \beta\neq \overline{\beta}} \int_{B(p_j , 2R_j)} |f(y) - c_j| \| X_{\a}^{\beta} \varphi_j\|_{\infty}\,dy \nonumber \\ &\lesssim \sum_{j=1}^N (\diam T_j)^{-\l} (2R_j)^{\varrho} + \sum_{j=1}^N \sum_{\beta \in \mathcal{B} , \beta \neq \overline{\beta}} (\diam T_j)^{-\sum_{i=1}^{\l} \beta_i}(2R_j)^{\varrho} \nonumber \\ &\sim \sum_{j=1}^N\left( (\diam T_j)^{\varrho - \l} + \sum_{\beta \in \mathcal{B}, \beta \neq \overline{\beta}} (\diam T_j)^{\left(\sum_{i=1}^{\l} 1-\beta_i \right) + \varrho -\l}\right) \label{almost_final_inequality}
\end{align}
Now since $T$ is compact, there exists $J \geq 1$ such that $T \subset B(0, J/2)$ and therefore $\diam T \leq J$. Hence, $(\diam T_j) / J \leq 1$ for all $j$. Resuming from (\ref{almost_final_inequality}) we have, \begin{align*}
    |\langle \L f, 1 \rangle| &\lesssim \sum_{j=1}^N \left( J^{\varrho - \l} (\diam T_j / J)^{\varrho - \l} + \sum_{\beta \in \mathcal{B} , \beta \neq \overline{\beta}} J^{\left(\sum_{i=1}^{\l} 1-\beta_i \right) + \varrho -\l} (\diam T_j / J)^{\left(\sum_{i=1}^{\l} 1-\beta_i \right) + \varrho -\l} \right) \\ &\leq J^{\varrho} \sum_{j=1}^N \left( \diam T_j / J\right)^{\varrho - \l}  \\ &\leq J^{\l} (\mathcal{H}_{\mathcal{D},\infty}^{\varrho-\l}(K) + \varepsilon) \lesssim J^{\l}(\h^{\varrho-\l}(K) +\varepsilon)
\end{align*}
which, combined with (\ref{upper_bound_on_cap}), shows $$\k^{\varrho}(K) \lesssim J^{\l}(\h^{\varrho-\l}(K) +\varepsilon) + \varepsilon.$$ This completes the proof since $J$ only depends on the fundamental tile $T$.   
\end{proof}

Characterizing removability through $\k^{\varrho}$ will follow from Theorem \ref{main_result}. We would like to be able to immediately cite \cite[Theorem 4.4]{CT} and say removability follows as a corollary to Theorem \ref{main_result}, however their definition of removability is slightly different than what we are using here. In particular, they require the condition $K \subset \Omega$ in addition to what we stated in Definition \ref{removable-sets-defn} (also note that \cite[Theorem 4.4]{CT} characterizes removable sets for BMO functions, not Campanato functions, but the same proof they use goes through for $L_C^{\varrho}$, $\varrho \in (\l, Q]$). So we will reprove one of the directions of \cite[Theorem 4.4]{CT} under the definition of removability in this paper.

\begin{thm}[Removability Characterization for $L_C^{\varrho}$]\label{rem-iff-hausdorff-zero}
 Fix $\l \in [1,Q)$ and let $\L$ be $\l$-homogeneous, left-invariant linear differential operator on $\G$ such that both $\L$ and $\L^t$ are hypoelliptic. Let $\varrho \in [\l, Q]$. Then a compact set $K \subset T$ is removable for $L_C^{\varrho}$ $\L$-solutions if and only if $\k^{\varrho}(K) = 0$.   
\end{thm}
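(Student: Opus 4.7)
My plan is to prove the two implications separately, with the reverse direction being the substantive one. The forward direction follows directly from the definitions: if $K$ is removable, then for every $f$ admissible in the definition of $\k^{\varrho}(K)$ we have $f \in L_C^{\varrho}(\G)$ with $\L f = 0$ on $\G \setminus K$, so applying Definition \ref{removable-sets-defn} with $\Omega = \G$ forces $\L f \equiv 0$ on all of $\G$; hence $\langle \L f, 1 \rangle = 0$, and taking the supremum over admissible $f$ gives $\k^{\varrho}(K) = 0$.

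For the converse, suppose $\k^{\varrho}(K) = 0$; by Theorem \ref{main_result} together with Lemma \ref{comp_of_measures} this yields $\cH^{\varrho - \l}_{\cD, \infty}(K) = 0$. Given a domain $\Omega$ and $f \in L_C^{\varrho}(\Omega)$ with $\L f = 0$ on $\Omega \setminus K$, I would show that $\langle \L f, \varphi \rangle = 0$ for each $\varphi \in C_0^{\infty}(\Omega)$. Set $K' = K \cap \supp \varphi$. Fix $\varepsilon > 0$ and cover $K'$ by dyadic tiles $\{T_{w_j}\}_{j=1}^N$ with $\sum_j (\diam T_{w_j})^{\varrho - \l} < \varepsilon$, arranged so that (i) each ball $B(p_{w_j}, 2\Rout_{w_j})$ lies inside $\Omega$ and (ii) $\bigcup_j T_{w_j}$ contains an open neighborhood of $K'$; both conditions can be met by imposing an upper bound on $\diam T_{w_j}$ in terms of $\operatorname{dist}(\supp \varphi, \partial \Omega)$ and by slightly enlarging an initial minimizing cover. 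Apply Lemma \ref{T-property-7} to produce cut-offs $\{\varphi_j\}$, and decompose $\varphi = \varphi(1 - \sum_j \varphi_j) + \sum_j \varphi \varphi_j$.

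The first summand vanishes on an open neighborhood of $K$, hence defines an element of $C_0^{\infty}(\Omega \setminus K)$ and pairs trivially with $\L f$. For each $j$ in the second summand, since $\L^t$ applied to a compactly supported smooth function integrates to zero, I would subtract the ball average $c_j := f_{B(p_{w_j}, 2\Rout_{w_j})}$ to rewrite
\[
\langle \L f, \varphi \varphi_j \rangle = \langle f - c_j, \L^t(\varphi \varphi_j) \rangle.
\]
Expanding $\L^t(\varphi \varphi_j)$ via the product rule (\ref{product_rule}), controlling the derivatives of $\varphi_j$ through Lemma \ref{T-property-7}, and using the Campanato estimate $\int_{B(p_{w_j}, 2\Rout_{w_j})} |f - c_j| \lesssim \|f\|_* (\diam T_{w_j})^{\varrho}$ on each ball (which sits inside $\Omega$ by construction) produces, exactly as in the chain culminating in (\ref{almost_final_inequality}) of the proof of Theorem \ref{main_result}, a bound of the form $C_\varphi \|f\|_* (\diam T_{w_j})^{\varrho - \l}$ for each summand. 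Summing over $j$ yields $|\langle \L f, \varphi \rangle| \lesssim C_\varphi \|f\|_* \varepsilon$, and sending $\varepsilon \to 0$ finishes the argument.

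The main technical obstacle is the two-sided geometric constraint on the cover: the tiles must be small enough that the inflated balls $B(p_{w_j}, 2\Rout_{w_j})$ lie inside $\Omega$ (so that $\|f\|_*$ genuinely controls the local integrals), and collectively they must contain an open neighborhood of $K'$ (so that the error piece $\varphi(1 - \sum_j \varphi_j)$ honestly avoids $K$). Once these are arranged via the choice of $d \sim \operatorname{dist}(\supp \varphi, \partial \Omega)$ and a slight inflation of the minimizing cover, the remaining estimates are essentially a verbatim repetition of the Campanato computation from the proof of Theorem \ref{main_result}.
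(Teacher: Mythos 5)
Your proposal is correct and follows essentially the same route as the paper: reduce to $\cH^{\varrho-\l}_{\cD,\infty}(K)=0$ via Theorem \ref{main_result}, cover the relevant part of $K$ by tiles of small total $(\varrho-\l)$-content, apply the Harvey--Polking partition of unity from Lemma \ref{T-property-7}, subtract ball averages, and run the Campanato estimate to get a bound of order $\varepsilon$. The only difference is presentational (you argue directly by splitting $\varphi = \varphi(1-\sum_j\varphi_j)+\sum_j\varphi\varphi_j$, while the paper argues by contradiction with a fixed test function supported in a ball), and you correctly flag and handle the one genuine technicality, namely arranging the cover so the inflated balls stay in $\Omega$ and the tiles engulf a neighborhood of $K\cap\supp\varphi$.
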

\begin{proof}
First assume $K$ is removable. Take any $f \in L_C^{\varrho}$ as in the definition of $\k^{\varrho}(K)$. In particular, $\supp (\L f) \subset K$. This means $\L f \equiv 0$ (as a function) on $K^c$. Applying Definition \ref{removable-sets-defn} which $\Omega = \G$ gives $\L f = 0$ on $\G$. Since $f$ was arbitrary this implies $\k^{\varrho}(K) = 0$.

Now assume $\k^{\varrho}(K) = 0$. By Theorem \ref{main_result} and Proposition \ref{T-property-2}, $\mathcal{H}_{\mathcal{D}}^{\varrho-\l}(K) = 0$. Take any domain $\Omega$ and $f \in L_C^{\varrho}(\Omega)$ for which $\L f = 0$ in $\Omega \backslash K$. Assume for contradiction that $\L f \neq 0$ in $\Omega$. Clearly we can assume $K \cap \Omega \neq \emptyset$. Find a ball $B(x,r)$ which for which $\overline{B(x,r)} \subset \Omega$ and $\L f \neq 0$ in $\overline{B(x,r)}$. We can do this by the assumption $\L f \neq 0$ in $\Omega$ and since $\Omega$ is open. Set $F = \overline{B(x,r)}$, a compact set. Observe that $K \cap F \neq \emptyset$. Indeed if $K \cap F = \emptyset$ then along with $F \subset \Omega$, this implies $\Omega \backslash K \supset F$ and hence, $\L f =0$ in $F$. This is a contradiction. So $K \cap F \neq \emptyset$. Furthermore, $\mathcal{H}_{\mathcal{D}}^{\varrho -\l}(K \cap F) = 0$. Set $d_0 := d(K \cap F , \Omega^c)$. Take any $\varepsilon > 0$ such that $\varepsilon < \min\{1, (d_0/4)^{\varrho-\l}\}$. By compactness, there exists a finite collection of tiles $\{ T_{w_j}\}_{{j=1}}^N$ such that $K \cap F \subset \bigcup_j T_{w_j}$ and \begin{align*}
    \sum_{j=1}^N (\diam T_{w_j})^{\varrho -\l} < \varepsilon.
\end{align*}
By shrinking $\varepsilon$ if necessary, we note that $\bigcup_j B(p_{w_j}, 2\Rout_{w_j}) \subset \Omega$ since $\Rout_{w_j} \leq \diam T_{w_j} \leq 2\Rout_{w_j}$.

By Lemma \ref{T-property-7} there exists a family of functions $\{ \varphi_j \}_{j=1}^N$ such that $\supp \varphi_j \subset B(p_{w_j}, 2\Rout_{w_j})$, $\sum_{j=1}^N \varphi_{j} \equiv 1$ on $\bigcup_j T_{w_j}$, and $\| X_{\a} \varphi_{j}\|_{\infty} \lesssim (\diam T_{w_j})^{-|\a|}$ (we are using the multi-index notation of (\ref{multi-index-notation}) here). Since $\L f \neq 0$ in $F$ we can find $\psi \in C_0^{\infty}(F)$ for which $| \langle \L f, \psi \rangle | > 0$ (and $\psi$ does not depend on $\varepsilon$). In the sense of distributions, since $F \subset \Omega$ we have, \begin{align}
 \supp (\psi \L f) &\subset F \cap (\Omega \backslash K)^c \nonumber =
 F \cap K. \nonumber
\end{align} The Poincar\'{e}-Birhkoff-Witt Theorem says $\L$ is a linear combination of differential operators $X_{\a_{\ell}}$ with $| \a_{\ell}| = \l$. So we can assume without loss of generality that $\L =X_{\a}$ for $|\a| = \l$. Then $\L^t  = (-1)^{\l}X_{\a}$. Recall the notation $X_{\a}^{\beta}$ as in (\ref{notation_for_X_B}) and the product formula (\ref{product_rule}) for $X_{\a}$. Therefore, since $\{ \varphi_j\}_j$ is a partition of unity on $ F \cap K \supset \supp (\psi \L f)$ we have with $c_j = f_{B(p_{w_j},2\Rout_{w_j})}$, \begin{align*}
|\langle \L f, \psi \rangle| = |\langle \psi \L f , 1 \rangle| &= \left| \langle \psi \L f , \sum_{j=1}^N \varphi_j \rangle \right| \\ &= \left| \sum_{j=1}^N \langle \L (f-c_j) , \psi \varphi_j \rangle \right| \\ &= \left| (-1)^{\l} \sum_{j=1}^N \langle f - c_j , \sum_{\beta \in \mathcal{B}} X_{\a}^{\beta}(\varphi_j) \star{X}_{\a}^{\beta}(\psi) \rangle \right|  \\ &\leq \sum_{j=1}^N |\langle f -c_j ,\psi X_{\a} \varphi_j \rangle| + \sum_{j=1}^N \left| \langle f -c_j , \sum_{\beta \in \mathcal{B}, \beta \neq \overline{\beta}} X_{\a}^{\beta}(\varphi_j)\star{X}_{\a}^{\beta}(\psi) \rangle \right| \\ &\lesssim \sum_{j=1}^N \int_{B(p_j , 2\Rout_{w_j})} |f(y) - c_j| \| X_{\a} \varphi_j\|_{\infty}\,dy \\ &\,\,\,\,\,\,\,\,\,\,\,\,\,\,\,\,\,\,\,\,\,\, + \sum_{j=1}^N \sum_{\beta \in \mathcal{B}, \beta\neq \overline{\beta}} \int_{B(p_j , 2\Rout_{w_j})} |f(y) - c_j| \| X_{\a}^{\beta} \varphi_j\|_{\infty}\,dy  \\ &\lesssim \sum_{j=1}^N (\diam T_{w_j})^{-\l} (2\Rout_{w_j})^{\varrho} + \sum_{j=1}^N \sum_{\beta \in \mathcal{B} , \beta \neq \overline{\beta}} (\diam T_{w_j})^{\left(\sum_{i=1}^{\l} 1-\beta_i \right)-\l}(2\Rout_{w_j})^{\varrho} \\ &\sim \sum_{j=1}^N\left( (\diam T_{w_j})^{\varrho - \l} + \sum_{\beta \in \mathcal{B}, \beta \neq \overline{\beta}} (\diam T_{w_j})^{\left(\sum_{i=1}^{\l} 1-\beta_i \right) + \varrho -\l}\right) \\ &\lesssim \sum_{j=1}^N (\diam T_{w_j})^{\varrho -\l} < \varepsilon.
\end{align*}
Since $\varepsilon > 0$ is arbitrary this implies $|
\langle \L f, \psi \rangle| = 0$, a contradiction. Hence, we indeed have $\L f = 0$ in $\Omega$, implying $K$ is removable.
\end{proof}

\vskip2em

\section{Capacity and Removability for H\"{o}lder Continuous Functions}\label{section-6}
Most of the proofs in this section will be almost exactly the same as in the previous section, so we will be light on details here.
\begin{defn}
Let $\Omega \subset \G$ open and $\delta \in (0,1)$. A function $f : \Omega \rightarrow \R$ belongs to $\text{Lip}_{\delta}(\Omega)$ if there exists a constant $C>0$ such that $$|f(x) - f(y)| \leq C\, d(x,y)^{\delta} \quad \text{ for all }x,y \in \Omega.$$ 
 \end{defn}
Let $\| \cdot \|_{\delta}$ denote the norm with respect to $\text{Lip}_{\delta}$. Fix a differential operator $\L$.

\begin{defn}[H\"{o}lder Capacity]
The capacity of a compact $K \subset \G$ with respect to $\text{Lip}_{\delta}$ and $\L$ is defined as, \begin{align*}
    \k^{\delta}(K) := \sup \{ | \langle \L f, 1 \rangle |: f \in \text{Lip}_{\delta}, \| f\|_{\delta} \leq 1, \supp(\L f) \subset K\}.
\end{align*}    
\end{defn} 
The main result of this section is the following:
\begin{thm}\label{main-result-holder}
Let $\L$ be a left-invariant differential operator homogeneous of degree $\l \in [1, Q)$ such that both $\L$ and $\L^t$ are hypoelliptic. For a compact $K \subset T$ and $\delta \in (0,1)$ we have, \begin{align*}
    \h^{Q-\l + \delta}(K) \lesssim \k^{\delta}(K) \lesssim \h^{Q-\l+\delta}(K).
\end{align*} 
\end{thm}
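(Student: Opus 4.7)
The proof will follow the template of Theorem \ref{main_result}, replacing the Campanato condition by a pointwise H\"older estimate. As before, I will split into the lower bound and upper bound.

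For the lower bound, the plan is to apply Frostman's lemma on $K$ to produce a positive measure $\mu$ supported in $K$ with $\mu(B(x,r)) \lesssim r^{Q-\l+\delta}$ and $\h^{Q-\l+\delta}(K) \lesssim \mu(K)$. Set $f(x) = \mu \ast \cg(x) = \int \cg(y^{-1}\cdot x)\,d\mu(y)$. By left-invariance of $\L$ and the fact that $\cg$ is the fundamental solution of $\L$, we get $\supp(\L f) \subset K$, exactly as in Theorem \ref{main_result}. The key analytic step is a H\"older estimate for $f$: given $x$ with $d(x,z) = r$, split $\mu = \mu_1 + \mu_2$ with $\mu_1 = \mu\restrict B(x,2r)$, $\mu_2 = \mu\restrict B(x,2r)^c$. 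For the near piece, use $|\cg(p)| \lesssim \|p\|^{\l-Q}$ (Proposition \ref{hom-func-bound} combined with Remark \ref{deriv-of-fund-soln}) together with the Frostman growth to bound $|\mu_1\ast \cg(x)| + |\mu_1\ast\cg(z)| \lesssim r^{\delta}$ via integration over annuli. For the far piece, use the pointwise mean value inequality $|\cg(y^{-1}\cdot x) - \cg(y^{-1}\cdot z)| \lesssim r\, \|y^{-1}\cdot x\|^{\l-Q-1}$ from \cite[Proposition 3.1.40]{FR}, combined with an annular decomposition of $B(x,2r)^c$ and Frostman's bound, yielding again a bound of order $r^{\delta}$ (the critical cancellation is the single power of $r$ from the mean value inequality compensating the borderline exponent). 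Normalizing $f$ by its H\"older constant and pairing against a cutoff $\varphi \equiv 1$ on a neighborhood of $K$ then gives $|\langle \L f,1\rangle| = \mu(K) \gtrsim \h^{Q-\l+\delta}(K)$, by Fubini as in Theorem \ref{main_result}.

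For the upper bound, I will use the dyadic Harvey--Polking machinery of Section \ref{section-3}. Given $\varepsilon > 0$, pick a finite essentially disjoint cover of $K$ by tiles $\{T_{w_j}\}_{j=1}^N \subset \cD_*$ with $\sum_j (\diam T_{w_j})^{Q-\l+\delta} \leq \h_{\cD,\infty}^{Q-\l+\delta}(K) + \varepsilon$, which by Lemma \ref{comp_of_measures} is comparable to $\h^{Q-\l+\delta}(K) + \varepsilon$. Lemma \ref{T-property-7} gives a subordinate partition of unity $\{\varphi_j\}$ with $\supp \varphi_j \subset B(p_{w_j},2\Rout_{w_j})$ and $\|X_\a \varphi_j\|_\infty \lesssim (\diam T_{w_j})^{-|\a|}$. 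For any admissible $f \in \text{Lip}_\delta$ with $\|f\|_\delta \le 1$ and $\supp(\L f)\subset K$, and any cutoff $\psi \equiv 1$ near $K$, write $\L = X_\a$ (after using the Poincar\'e--Birkhoff--Witt decomposition as in the proof of Theorem \ref{main_result}) and set $c_j = f(p_{w_j})$. The H\"older hypothesis gives the pointwise bound $|f(y)-c_j| \lesssim (\diam T_{w_j})^\delta$ on $B(p_{w_j},2\Rout_{w_j})$, uniformly in $y$. Expanding as in Theorem \ref{main_result} via the product rule (\ref{product_rule}),
\begin{align*}
|\langle \L f,1\rangle| &= \Bigl|\sum_j \langle f - c_j,\, \textstyle\sum_{\beta\in\mathcal{B}} X_\a^\beta(\varphi_j)\,\star{X}_\a^\beta(\psi) \rangle\Bigr| \\
&\lesssim \sum_j \sum_{\beta\in\mathcal{B}} (\diam T_{w_j})^\delta\, (\diam T_{w_j})^{-\sum_i \beta_i}\,(2\Rout_{w_j})^Q.
\end{align*}
The dominant term corresponds to $\beta = (1,\dots,1)$, giving $(\diam T_{w_j})^{Q-\l+\delta}$; the other terms carry strictly larger powers of $\diam T_{w_j} \leq J$, where $J$ depends only on the fundamental tile $T$, and therefore are absorbed, just as at the end of the proof of Theorem \ref{main_result}. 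Summing and letting $\varepsilon \to 0$ yields $\k^\delta(K) \lesssim \h^{Q-\l+\delta}(K)$.

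The step I expect to be the main obstacle is the H\"older estimate for $\mu\ast\cg$, in particular the far-piece computation: since the exponent $Q-\l+\delta$ can be close to $Q$ (corresponding to near-integrable regimes), one must be careful that the annular sum arising from $\|y^{-1}\cdot x\|^{\l-Q-1}d\mu(y)$ converges with the correct $r^\delta$ scaling. Everything else is a direct transcription of the Campanato argument; the removability characterization (analogous to Theorem \ref{rem-iff-hausdorff-zero}) will then follow immediately from Theorem \ref{main-result-holder} and Lemma \ref{T-property-7}.
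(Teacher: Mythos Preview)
Your proposal is correct and follows essentially the same approach as the paper: the lower bound via Frostman's lemma and the H\"older estimate for $\mu\ast\cg$ (the paper proves this as a separate lemma using the same near/far split and the mean value inequality from \cite[Proposition 3.1.40]{FR}), and the upper bound via the Harvey--Polking partition of unity with the choice $c_j = f(p_{w_j})$, which is exactly the modification the paper makes relative to the Campanato case.
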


\begin{rem}
Combining Theorems \ref{main_result} and \ref{main-result-holder} gives another characterization of Hausdorff dimension for compact sets $K$. Take $\L$ satisfying the above conditions and for which $\L$ homogeneous of degree $1$. By translating and dilating $K$ to be a subset of $T$, we can then consider when $\h^{\beta}(K)=0$ for $\beta \in [0, Q)$ and find either the Campanato or H\"{o}lder capacity which fits this $\beta$. The resulting capacity will then also be null. So if one suspects that the dimension of a set is strictly less than $Q$, then looking at the null sets for this capacity can give another way of finding the dimension of that set.
\end{rem}

As before, we will first show $\cg \ast \mu \in \text{Lip}_{\delta}$ for a Frostman measure $\mu$ and $\cg$ as in Theorem \ref{folland-fund-soln}. We provide the details of the proof of the following lemma. This proof is taken from \cite[Theorem 4.12]{CT}.
\begin{lem}
Fix a compact $K$ and $\delta \in (0,1)$. Let $\mu$ be a finite measure supported on $K$ such that $\mu(B(x,r)) \lesssim r^{\varrho - \l}$ for any ball. Then, \begin{align*}
    f(x) :=  \mu(x) \ast \cg = \int \cg(y^{-1}\cdot x)\, d\mu(y) \in \text{Lip}_{\delta}.
\end{align*}
\end{lem}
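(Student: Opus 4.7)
The plan is to imitate the proof of Lemma \ref{conv_is_in_L_C} but aim at a \emph{pointwise} H\"{o}lder estimate rather than an oscillation bound, using the Frostman exponent $s = Q - \l + \delta$ that is natural in the H\"{o}lder setting (this is the exponent demanded by Theorem \ref{main-result-holder}). Fix $x_0,x \in \G$ and set $h := \|x_0^{-1}\cdot x\|$. The quantity $|f(x)-f(x_0)|$ is bounded uniformly for $h$ bounded below (standard tail estimate for the kernel against a finite measure), so one may assume $h$ is small. Split $\mu = \mu_1 + \mu_2$ with $\mu_1 := \mu\restrict_{B(x_0,2h)}$ and $\mu_2 := \mu\restrict_{B(x_0,2h)^c}$, and write $f = f_1 + f_2$ accordingly. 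The strategy is to show $|f_1(x)| + |f_1(x_0)| \lesssim h^{\delta}$ and $|f_2(x)-f_2(x_0)| \lesssim h^{\delta}$ separately.

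For the near piece, I would use Proposition \ref{hom-func-bound} to get $|\cg(p)| \lesssim \|p\|^{\l-Q}$ on $\G\setminus\{0\}$ and bound $|f_1(x)|$ and $|f_1(x_0)|$ each by $\int \|y^{-1}\cdot z\|^{\l-Q}\,d\mu_1(y)$ with $z\in\{x,x_0\}$. Since the support of $\mu_1$ is contained in $B(z,3h)$, decomposing this ball into dyadic annuli about $z$ and using the Frostman bound yields
\begin{equation*}
\int \|y^{-1}\cdot z\|^{\l-Q}\,d\mu_1(y) \lesssim \sum_{j=0}^{\infty}(2^{-j}h)^{\l-Q}(2^{-j}h)^{Q-\l+\delta} = h^{\delta}\sum_{j=0}^{\infty}2^{-j\delta} \lesssim h^{\delta},
\end{equation*}
the series converging precisely because $\delta > 0$. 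For the far piece, I would invoke the same pointwise estimate on $\cg$ from \cite[Proposition 3.1.40]{FR} that was used in Lemma \ref{conv_is_in_L_C}: setting $X = y^{-1}\cdot x_0$ and $Y = x_0^{-1}\cdot x$, the condition $y\in B(x_0,2h)^c$ forces $\|Y\|\leq \|X\|/2$, so
\begin{equation*}
|\cg(y^{-1}\cdot x)-\cg(y^{-1}\cdot x_0)| \lesssim h\,\|y^{-1}\cdot x_0\|^{\l-Q-1}.
\end{equation*}
Integrating against $\mu_2$ and splitting $B(x_0,2h)^c$ into dyadic annuli about $x_0$ gives
\begin{equation*}
|f_2(x)-f_2(x_0)| \lesssim h\sum_{j=0}^{\infty}(2^{j+1}h)^{\l-Q-1}(2^{j+2}h)^{Q-\l+\delta} \sim h\sum_{j=0}^{\infty}(2^j h)^{\delta-1} \sim h^{\delta},
\end{equation*}
the series converging because $\delta < 1$.

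The only real content of the proof -- and what I expect to be the single point needing care -- is that the exponent $Q-\l+\delta$ balances both sums at once: the inward-facing geometric series needs $\delta > 0$ while the outward-facing series needs $\delta < 1$, so the assumption $\delta\in(0,1)$ is exactly what makes the argument close. Combining the near and far estimates then gives $|f(x)-f(x_0)| \lesssim h^{\delta} = \|x_0^{-1}\cdot x\|^{\delta}$, which is the H\"{o}lder bound claimed.
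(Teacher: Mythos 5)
Your proposal is correct and follows essentially the same route as the paper's proof: the same near/far splitting of the measure at scale comparable to $d(x,x_0)$, the same pointwise bound $|\cg(p)|\lesssim \|p\|^{\l-Q}$ with inward dyadic annuli for the near piece, the same gradient estimate from \cite[Proposition 3.1.40]{FR} with outward annuli for the far piece, and the same observation that $\delta>0$ and $\delta<1$ are what make the two geometric series converge. You also correctly use the Frostman exponent $Q-\l+\delta$, which is what the paper's computation actually requires (the exponent $\varrho-\l$ in the lemma statement appears to be carried over from the Campanato case).
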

\begin{proof}
    Let $x,z \in \G$. Then, \begin{align*}
         |f(x) -f(z)| &\leq \int_{ \{ \| y^{-1} \cdot z\| > 2\|z^{-1} \cdot x\|\}} |\cg(y^{-1} \cdot x) - \cg(y^{-1} \cdot z)|\,d\mu(y) \\ &\,\,\,\,\,\,\,\,\,\,\,\,\,\,\,\,\,\,\,\,+ \int_{ \{ \| y^{-1} \cdot z\| \leq 2\|z^{-1} \cdot x\|\}} |\cg(y^{-1} \cdot x) - \cg(y^{-1} \cdot z)|\,d\mu(y) \\ &=: I_1 + I_2.
     \end{align*}
Since $\cg$ is homogeneous of degree $\l-Q$ on $\G \backslash \{0\}$, by \cite[Proposition 3.1.40]{FR} \begin{equation}\label{ineq-for-holder}
|\cg(X \cdot Y) - \cg(X)| \lesssim \| Y\| \| X\|^{\l-Q-1} \quad \text{ for all }\| Y\| \leq \|X\|/2.
\end{equation}
Then with $X = y^{-1} \cdot z$ and $Y = z^{-1} \cdot x$, equation (\ref{ineq-for-holder}) gives \begin{align*}
    I_1 &\lesssim \| z^{-1} \cdot x\| \int_{\{ \| y^{-1} \cdot z\| > 2\|z^{-1} \cdot x\|\}} \|y^{-1} \cdot z\|^{\l - Q -1}\,d\mu(y) \\ &\leq \| z^{-1} \cdot x\| \sum_{j=1}^{\infty} \int_{2^j \|z^{-1} \cdot x\| < \| y^{-1} \cdot z\| \leq 2^{j+1}\|z^{-1} \cdot x\|} \| y^{-1} \cdot z\|^{\l - Q-1}\,d\mu(y) \\ &\leq \| z^{-1} \cdot x\| \sum_{j=1}^{\infty} (2^j \| z^{-1} \cdot x\|)^{\l - Q -1} \mu(B(z, 2^{j+1}\|z^{-1} \cdot x\|)) \\ &\sim \| z^{-1} \cdot x\|^{\delta} \sum_{j=1}^{\infty} 2^{j(\delta - 1)} \\ &\sim \|z^{-1} \cdot x\|^{\delta}.
\end{align*}
Note that, $$\{ y \in \G : \| y^{-1} \cdot z\| \leq 2\|z^{-1} \cdot x\|\} \subset \{y \in \G : \| y^{-1} \cdot x\| \leq 3\| z^{-1} \cdot x\|\}.$$ Hence, \begin{align*}
    I_2 &\leq \int_{\|y^{-1} \cdot x\| \leq 3\|z^{-1} \cdot x\|} |\cg(y^{-1} \cdot x)|\,d\mu(y) + \int_{\| y^{-1} \cdot z\| \leq 2\|z^{-1} \cdot x\|} |\cg(y^{-1} \cdot z)|\,d\mu(y) \\ &=: I_{2_a} + I_{2_b}.
\end{align*}
Then by Proposition \ref{hom-func-bound}, \begin{align*}
    I_{2_a} &\lesssim \sum_{j=0}^{\infty}\int_{ 2^{-j-1}3\|z^{-1} \cdot x\| <\| y^{-1} \cdot x\| \leq 2^{-j} 3\|z^{-1} \cdot x\|} \| y^{-1} \cdot x\|^{\l - Q}\,d\mu(y) \\ &\lesssim \sum_{j=0}^{\infty} (2^{-j} 3 \| z^{-1} \cdot x\|)^{\l - Q} \mu(B(z, 2^{-j} 3 \| z^{-1} \cdot x\|)) \\ &\lesssim \| z^{-1} \cdot x\|^{\delta} \sum_{j=0}^{\infty} 2^{-j \delta} \\ &\sim \| z^{-1} \cdot x\|^{\delta}
\end{align*}
and similarly, \begin{align*}
   I_{2_b} =\int_{\| y^{-1} \cdot z\| \leq 2\|z^{-1} \cdot x\|} |\cg(y^{-1} \cdot z)|\,d\mu(y) \lesssim \|z^{-1} \cdot x\|^{\delta}.
\end{align*}
Therefore, $|f(x) - f(z)| \lesssim d(x,z)^{\delta}$ which completes the proof.
\end{proof}
We can now prove the main result of this section. We will not provide details though.
\begin{proof}[Proof of Theorem \ref{main-result-holder}]
The lower bound follows in the same way as the lower bound for Theorem \ref{main_result}. The upper bound uses almost exactly the same proof as the upper bound in Theorem \ref{main_result}, except we take $c_j = f(p_{w_j})$.
\end{proof}
Again, we can use Theorem \ref{main-result-holder} to characterize removable sets for H\"{o}lder continuous functions. 

\begin{thm}[Removability Characterization for $\text{Lip}_{\delta}$]
Fix $\l \in [1,Q)$ and let $\L$ be $\l$-homogeneous, left-invariant linear differential operator on $\G$ such that both $\L$ and $\L^t$ are hypoelliptic. Let $\delta \in (0,1)$. Then a compact set $K \subset T$ is removable for $\text{Lip}_{\delta}$ $\L$-solutions if and only if $\k^{\delta}(K) = 0$.     
\end{thm}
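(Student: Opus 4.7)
The plan is to follow very closely the argument used in Theorem \ref{rem-iff-hausdorff-zero} for the Campanato case, since the structural ingredients (Frostman measure construction for the lower bound on $\k^{\delta}$, and the Harvey--Polking partition of unity together with local derivative estimates for the upper bound) are identical. The only real change is that the pointwise regularity of H\"older functions lets us compare $f(y)$ with a pointwise value $f(p_{w_j})$ rather than with an average.

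First I would establish the easy direction. Assume $K$ is removable, and take any admissible $f \in \text{Lip}_{\delta}$ with $\|f\|_{\delta}\leq 1$ and $\supp(\L f)\subset K$. Then $\L f \equiv 0$ on $K^c$, so applying Definition \ref{removable-sets-defn} with $\Omega=\G$ yields $\L f = 0$ on all of $\G$. Taking the supremum over such $f$ gives $\k^{\delta}(K)=0$.

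For the converse, assume $\k^{\delta}(K) = 0$. By Theorem \ref{main-result-holder} together with Proposition \ref{T-property-2} and Lemma \ref{comp_of_measures}, we obtain $\cH^{Q-\l+\delta}_{\cD}(K) = 0$. Suppose toward contradiction that there is a domain $\Omega$ and $f \in \text{Lip}_{\delta}(\Omega)$ with $\L f = 0$ on $\Omega\setminus K$ but $\L f \not\equiv 0$ on $\Omega$. Exactly as in Theorem \ref{rem-iff-hausdorff-zero}, pick a closed ball $F = \overline{B(x,r)}\subset \Omega$ on which $\L f\neq 0$, so that $F\cap K\neq \emptyset$ and $\cH^{Q-\l+\delta}_{\cD}(F\cap K)=0$. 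Fix $\psi \in C_0^{\infty}(F)$ with $|\langle \L f,\psi\rangle| > 0$, and set $d_0 = d(F\cap K, \Omega^c)$. For any small $\varepsilon > 0$ (also small enough that $2\Rout_{w_j}< d_0/4$ for all tiles below), use the vanishing content to cover $F\cap K$ by finitely many tiles $\{T_{w_j}\}_{j=1}^N$ with $\sum_j (\diam T_{w_j})^{Q-\l+\delta}< \varepsilon$ and with $\bigcup_j B(p_{w_j},2\Rout_{w_j})\subset \Omega$, then apply Lemma \ref{T-property-7} to get a partition of unity $\{\varphi_j\}$ with the usual support and derivative estimates.

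By the Poincar\'e--Birkhoff--Witt reduction, as in the proof of Theorem \ref{rem-iff-hausdorff-zero}, we may assume $\L = X_\a$ with $|\a|=\l$. Setting $c_j := f(p_{w_j})$ (this is the crucial switch from the Campanato case where $c_j$ was an average), we compute with the product rule \eqref{product_rule}:
\begin{align*}
|\langle \L f,\psi\rangle|
&= \Bigl|\sum_{j=1}^N \langle \L(f-c_j),\psi\varphi_j\rangle\Bigr|
= \Bigl|(-1)^{\l}\sum_{j=1}^N \langle f-c_j,\sum_{\beta\in \mathcal{B}}X_{\a}^{\beta}(\varphi_j)\star{X}_{\a}^{\beta}(\psi)\rangle\Bigr|.
\end{align*}
On each $B(p_{w_j},2\Rout_{w_j})$ the pointwise H\"older bound gives $|f(y)-c_j|\leq \|f\|_{\delta}(2\Rout_{w_j})^{\delta}$, which combined with $\|X_{\a}^{\beta}\varphi_j\|_{\infty}\lesssim (\diam T_{w_j})^{-\sum_i\beta_i}$ and the ball volume $\sim (\Rout_{w_j})^Q$ bounds each summand by a constant times $(\diam T_{w_j})^{Q-\l+\delta}$ (the remaining $\beta\neq\overline\beta$ terms contribute strictly higher powers of $\diam T_{w_j}\leq \diam T$, so they are controlled by the leading term). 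Summing yields $|\langle \L f,\psi\rangle|\lesssim \varepsilon$. Letting $\varepsilon\to 0$ contradicts $|\langle \L f,\psi\rangle|>0$.

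The main (and essentially only) obstacle is bookkeeping: one must ensure that the enlarged supports $B(p_{w_j},2\Rout_{w_j})$ stay inside $\Omega$ (handled by shrinking $\varepsilon$ relative to $d_0$) and that all the $\beta\neq\overline\beta$ terms in the product rule are absorbed into the leading $(\diam T_{w_j})^{Q-\l+\delta}$ order (handled because $\diam T_{w_j}\leq \diam T$ is bounded). Neither presents any genuine difficulty.
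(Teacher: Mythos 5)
Your proposal is correct and follows exactly the route the paper takes: the easy direction is immediate from the definition, and the converse repeats the Campanato argument of Theorem \ref{rem-iff-hausdorff-zero} verbatim with the single change $c_j = f(p_{w_j})$, so that the pointwise H\"older bound $|f(y)-c_j|\lesssim (\Rout_{w_j})^{\delta}$ together with the ball volume produces the exponent $Q-\l+\delta$. No gaps.
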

\begin{proof}
As before, the assumption that $K$ is removable immediately gives $\k^{\delta}(K) = 0$. For the other direction when $\k^{\delta}(K) = 0$, the proof is the same as in Theorem \ref{rem-iff-hausdorff-zero} except we take $c_j = f(p_{w_j})$.
\end{proof}

\vskip2em

\section{Capacity and Removability for $L^p_{\text{loc}}$}\label{section-7}
To characterize removable sets for $L^p_{\text{loc}}$, we will not follow the same pattern as in the previous two sections. Rather, we can characterize removability in a fairly direct fashion as in Section \ref{lip-section}. Let $\| f\|_{L^p(A)}$ denote the $L^p$ norm on a Lebesgue measurable set $A \subset \G$.
\begin{defn}[$L^p_{\text{loc}}$ Capacity]
The capacity of a compact $K \subset \G$ with respect to $L^p_{\text{loc}}$ and $\L$ is defined as, \begin{align*}
    \k^{p}(K) := \sup \{ | \langle \L f, 1 \rangle |: f \in L^p_{\text{loc}}, \| f\|_{L^p(K)} \leq 1, \supp(\L f) \subset K\}.
\end{align*}    
\end{defn} 
In this section, we will use a homogeneity property of the product formula given in (\ref{product_rule}). As before, for a differential operator $\L$ satisfying the hypotheses of Theorem \ref{folland-fund-soln}, we get the existence of $\cg \in C^{\infty}(\G \backslash \{0\})$ which is homogeneous of degree $\l-Q$ on $\G \backslash \{0\}$ and $\L \cg = \delta$. It is easy to check that \begin{equation}\label{cg-in-l1}
X \cg \in L^1_{\text{loc}}
\end{equation} whenever $X$ is a differential operator homogeneous of degree \textbf{strictly less} than $\l$. One can integrate over annuli or use the polar coordinate formula of \cite[Proposition 5.4.4]{BLU} to see this. So the main purpose of the following product formula will be making sure that all relevant derivatives that hit $\cg$ satisfy this homogeneity property.

 Fix a left-invariant differential operator $\L$ homogeneous of degree $\l \in [1, Q)$. We will now find a distributional formula for $\psi \L f \ast \cg$ similar to what was done in Section \ref{lip-section}. The Poincar\'{e}-Birhkoff-Witt Theorem says $\L$ is a linear combination of multi-indices $X_{\a}$ with $| \a| = \l$, where again, we are using the multi-index notation of (\ref{multi-index-notation}). Hence, we can write $\L = \sum_{\a} c_{\a}X_{\a}$ for some finite index of $\a$'s. So for $\psi ,\varphi \in C^{\infty}$ we have, \begin{align}
\L (\psi \varphi) &= \sum_{\a} c_{\a} X_{\a} (\psi \varphi) \nonumber \\ &= \sum_{\a} c_{\a} (X_{\a} \psi )\varphi + \sum_{\a} \sum_{\beta \in \mathcal{B}, \beta \neq \overline{\beta}} c_{\a} X_{\a}^{\beta}(\psi) \star{X}_{\a}^{\beta}(\varphi) \nonumber \\ &= \varphi (\L \psi) + \sum_{\beta \in \mathcal{B}, \beta \neq \overline{\beta}} \sum_{\a} c_{\a} X_{\a}^{\beta}(\psi) \star{X}_{\a}^{\beta}(\varphi). \label{almost_product_rule}
\end{align}
Notice that for $\beta \neq \overline{\beta}$, $X_{\a}^{\beta}$ is a homogeneous operator of degree at most $\l-1$. To ease notation, rewrite the second sum of (\ref{almost_product_rule}) as, \begin{align*}
\sum_{\beta \in \mathcal{B}, \beta \neq \overline{\beta}} c_{\a} X_{\a}^{\beta}(\psi) \star{X}_{\a}^{\beta}(\varphi) &= \sum_{\eta} P_{\eta}(\psi) Q_{\eta}(\varphi)
\end{align*}
where $P_{\eta}$, $Q_{\eta}$ are differential operators and $\eta$ runs through some finite indexing set. Now we will come up a distributional formula in the style of Harvey-Polking for objects of the form $\psi\L f$. Take any open $\Omega \subset \G$, $\L f \in \D'(\Omega)$, and $\psi \in C_0^{\infty}(\Omega)$. Take any $\phi \in C_0^{\infty}(\Omega)$. Then, \begin{align*}
    \langle \psi \L f, \phi \rangle &= \langle f, \psi \L^t \phi \rangle + \sum_{\eta} \langle f, P_{\eta}^t(\phi) Q_{\eta}^t(\psi)\rangle \\ &= \langle \L (\psi f) , \phi \rangle + \sum_{\eta} \langle P_{\eta}(f Q_{\eta}^t(\psi)), \phi \rangle
\end{align*}
which means we have the distributional formula of,
\begin{equation}\label{distributional_product_formula}
 \psi \L f = \L (\psi f) + \sum_{\eta} P_{\eta}(f Q_{\eta}^t(\psi)).   
\end{equation}
By \cite[Corollary 2.8]{Fo}, $\L(\psi f) \ast \cg = \psi f$. So convolving $\cg$ with (\ref{distributional_product_formula}) gives, \begin{equation}\label{almost-distr-form-with-cg}
\psi \L f \ast \cg = \psi f + \sum_{\eta} P_{\eta}(f Q_{\eta}^t(\psi)) \ast \cg    
\end{equation}
Now notice that by (\ref{almost_product_rule}), $P_{\eta}$ is a composition of left-invariant vector fields. So by iterating the formula on \cite[Page 31]{FR} we can rewrite (\ref{almost-distr-form-with-cg}) as, \begin{equation}\label{distr-form-cg}
 \psi \L f \ast \cg = \psi f + \sum_{\eta} fQ_{\eta}^t \psi \ast \wt{P_{\eta}}\cg
\end{equation}
where $\wt{P_{\eta}}$ is a composition of right-invariant vector fields. Furthermore, it is straightforward to check that $\wt{P_{\eta}}$ has the same homogeneity degree as $P_{\eta}$. In particular, $\wt{P_{\eta}}$ is homogeneous of degree at most $\l-1$. 

\begin{thm}[Removability Characterization for $L^p_{\text{loc}}$]
Let $p \in [1, \infty)$. Fix $\l \in [1,Q)$ and let $\L$ be $\l$-homogeneous, left-invariant linear differential operator on $\G$ such that both $\L$ and $\L^t$ are hypoelliptic. A compact $K \subset \G$ is removable for $L^p_{\text{loc}}$ $\L$-solutions if and only if $\k^p(K) = 0$.
\end{thm}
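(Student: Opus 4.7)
The plan is to follow the strategy of Theorem \ref{rem-for-lip}, with the distributional formula (\ref{distr-form-cg}) playing the role that (\ref{dist-form-lip}) did in the Lipschitz case. The easy direction is immediate: if $K$ is removable, applying Definition \ref{removable-sets-defn} with $\Omega = \G$ to any admissible $f$ forces $\L f \equiv 0$ on all of $\G$, whence $\langle \L f, 1\rangle = 0$ and $\k^p(K) = 0$.

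For the converse I argue by contrapositive. Assume $K$ is not removable: there exists a domain $D$ and $f \in L^p_{\text{loc}}(D)$ with $\L f = 0$ on $D \setminus K$ but $\L f \not\equiv 0$ on $D$. Pick $\varphi \in C_0^\infty(D)$ with $|\langle \varphi \L f, 1\rangle| > 0$ and set $f_\varphi := (\varphi \L f) \ast \cg$. Left-invariance of $\L$ combined with $\L \cg = \delta$ yields $\L f_\varphi = \varphi \L f$, so $\supp \L f_\varphi \subset K$. By (\ref{distr-form-cg}),
\begin{equation*}
f_\varphi = \varphi f + \sum_{\eta} (f \, Q_\eta^t \varphi) \ast \wt{P_\eta}\cg .
\end{equation*}
The term $\varphi f$ is in $L^p(\G)$ with compact support. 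For each $\eta$, the operator $\wt{P_\eta}$ is homogeneous of degree at most $\l - 1 < \l$, so by (\ref{cg-in-l1}) the kernel $\wt{P_\eta}\cg$ lies in $L^1_{\text{loc}}$; meanwhile $f\, Q_\eta^t \varphi \in L^p(\G)$ with compact support inside $\supp \varphi$. Localizing $\wt{P_\eta}\cg$ to a ball large enough that $(\supp \varphi)^{-1} \cdot B$ is contained in it (for any fixed target ball $B$) and applying Young's convolution inequality in $(\G, \cdot)$ shows each convolution is in $L^p_{\text{loc}}$. Consequently $f_\varphi \in L^p_{\text{loc}}$ and, since $K$ is bounded, $A := \|f_\varphi\|_{L^p(K)} < \infty$.

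If $A > 0$ then $f_\varphi/A$ is admissible in the definition of $\k^p(K)$ and
\begin{equation*}
\k^p(K) \geq \frac{|\langle \L f_\varphi, 1\rangle|}{A} = \frac{|\langle \varphi \L f, 1\rangle|}{A} > 0.
\end{equation*}
If $A = 0$, then any positive multiple of $f_\varphi$ is also admissible, forcing $\k^p(K) = \infty$. Either way $\k^p(K) > 0$, as required.

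The main obstacle is controlling the convolutions $(f Q_\eta^t \varphi) \ast \wt{P_\eta}\cg$. Since $\wt{P_\eta}\cg$ has slow decay at infinity and is merely locally integrable, Young's inequality cannot be invoked globally; the compact support of $f Q_\eta^t \varphi$ must be exploited through a truncation. This step is in spirit analogous to Lemmas \ref{first_func_is_lip}--\ref{second_func_in_lip}, but is substantially simpler because membership in $L^p_{\text{loc}}$ only requires integrability rather than the pointwise bounds on horizontal derivatives needed in the Lipschitz setting, and in particular no integration-by-parts identity of the form of Lemma \ref{real-ibp} is needed.
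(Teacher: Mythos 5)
Your proposal is correct and follows essentially the same route as the paper: both rest on the distributional identity (\ref{distr-form-cg}) together with $\wt{P_{\eta}}\cg \in L^1_{\text{loc}}$ and Young's inequality to conclude $\psi \L f \ast \cg \in L^p_{\text{loc}}$, the only differences being that you phrase the hard direction as a contrapositive (as in Theorem \ref{rem-for-lip}) and spell out the truncation needed for Young's inequality and the $A=0$ edge case, which the paper passes over. These are presentational refinements, not a different argument.
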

\begin{proof}
As before, the direction of removable implying $\k^p(K) = 0$ is trivial. So assume $\k^p(K) = 0$. Take any $f \in L^p_{\text{loc}}$ with $\supp(\L f)\subset K$. Take any $\psi \in C_0^{\infty}$. Since $\cg$ is the fundamental solution to $\L$, left-invariance of $\L$ implies \begin{align}
|\langle \L f, \psi \rangle| &= |\langle \psi \L f, 1 \rangle | \nonumber \\ &= \left| \langle \L (\psi \L f \ast \cg), 1 \rangle \right|. \nonumber
\end{align}
It suffices to show $\psi \L f \ast \cg \in L^p_{\text{loc}}$. Indeed, first notice $\supp( \L (\psi \L f \ast \cg)) = \supp(\psi \L f) \subset K$. Hence, for $A := \| \psi \L f \ast \cg\|_{L^p(K)}$, if we can show $\psi \L f \ast \cg \in L_{\text{loc}}^p$ then the assumption $\k^p(K) = 0$ implies \begin{align*}
     |\langle \L (\psi \L f  \ast \cg), 1 \rangle |/A = |\langle \L f, \psi \rangle| /A = 0
\end{align*}
and we would be done. Recall that (\ref{distr-form-cg}) says we have the distributional relation of, \begin{equation}\label{show-each-in-lp}
\psi \L f \ast \cg = \psi f + \sum_{\eta} f Q_{\eta}^t \psi \ast \wt{P_{\eta}} \cg 
\end{equation}
where $Q_{\eta}^t$ and $\wt{P_{\eta}}$ have the properties previously discussed. We will show each function on the right hand side of (\ref{show-each-in-lp}) is in $L^p_{\text{loc}}$. Clearly $\psi f \in L^p_{\text{loc}}$. Now fix $\eta$ and consider $f Q_{\eta}^t\psi \ast \wt{P_{\eta}}\cg$. By (\ref{cg-in-l1}), $\wt{P_{\eta}} \cg \in L^1_{\text{loc}}$, and clearly $f Q_{\eta}^t \psi \in L^p_{\text{loc}}$. So Young's inequality says $f Q_{\eta}^t \psi \ast \wt{P_{\eta}} \cg \in L^p_{\text{loc}}$, which completes the proof.
\end{proof}

\printbibliography

\end{document}